\newtheorem{theorem}{Theorem}[section]
\newtheorem{lemma}[theorem]{Lemma}
\newtheorem{corollary}[theorem]{Corollary} 
\theoremstyle{definition}
\newtheorem{example}[theorem]{Example}
\theoremstyle{remark}
\newtheorem{remark}[theorem]{Remark}
\newtheorem{ack}{Acknowledgment} 
\numberwithin{equation}{section}
\def\diaCrossP{\unitlength.1em
  \begin{minipage}{15\unitlength}
    \begin{picture}(15,15)
      \put(0,0){\vector(1,1){15}}
      \qbezier(15,0)(15,0)(10,5)
      \qbezier(5,10)(0,15)(0,15)
      \put(0,15){\vector(-1,1){0}}
    \end{picture}
  \end{minipage}
}
\def\diaCrossN{\unitlength.1em
  \begin{minipage}{15\unitlength}
    \begin{picture}(15,15)
      \put(15,0){\vector(-1,1){15}}
      \qbezier(0,0)(0,0)(5,5)
      \qbezier(10,10)(15,15)(15,15)
      \put(15,15){\vector(1,1){0}}
    \end{picture}
  \end{minipage}
}
\def\diaSmooth{\unitlength.1em
  \begin{minipage}{15\unitlength}
    \begin{picture}(15,15)
      \qbezier(0,0)(10,7.5)(0,15)
      \qbezier(15,0)(5,7.5)(15,15)
      \put(15,15){\vector(1,1){0}}
      \put(0,15){\vector(-1,1){0}}
    \end{picture}
  \end{minipage}
}
\def\diaTrivial{\unitlength.1em
  \begin{minipage}{10\unitlength}
    \begin{picture}(10,10)
      \put(5,5){\circle{10}}
    \end{picture}
  \end{minipage}
}
\def\diaProjected{\unitlength.1em
  \begin{minipage}{14\unitlength}
    \begin{picture}(14,14)
      \put(0,0){\vector(1,1){14}}
      \put(14,0){\vector(-1,1){14}}
    \end{picture}
  \end{minipage}
}
\def\diaPosTouchR{\unitlength.1em
  \begin{minipage}{15\unitlength}
    \begin{picture}(15,15)
      \qbezier(0,0)(15,15)(15,15)
      \qbezier(15,0)(15,0)(10,5)
      \qbezier(5,10)(0,15)(0,15)
    \put(10,5){$\ast$}
    \end{picture}
  \end{minipage}
}
\def\diaPosTouchL{\unitlength.1em
  \begin{minipage}{15\unitlength}
    \begin{picture}(15,15)
      \qbezier(0,0)(15,15)(15,15)
      \qbezier(15,0)(15,0)(10,5)
      \qbezier(5,10)(0,15)(0,15)
    \put(0,5){$\ast$}
    \end{picture}
  \end{minipage}
}
\def\diaPosTouchLR{\unitlength.1em
  \begin{minipage}{15\unitlength}
    \begin{picture}(15,15)
      \qbezier(0,0)(15,15)(15,15)
      \qbezier(15,0)(15,0)(10,5)
      \qbezier(5,10)(0,15)(0,15)
    \put(10,5){$\ast$}
    \put(0,5){$\ast$}
    \end{picture}
  \end{minipage}
}
\def\diaNegTouchB{\unitlength.1em
  \begin{minipage}{15\unitlength}
    \begin{picture}(15,15)
      \qbezier(0,0)(15,15)(15,15)
      \qbezier(15,0)(15,0)(10,5)
      \qbezier(5,10)(0,15)(0,15)
    \put(5,0){$\ast$}
    \end{picture}
  \end{minipage}
}
\def\diaNegTouchT{\unitlength.1em
  \begin{minipage}{15\unitlength}
    \begin{picture}(15,15)
      \qbezier(0,0)(15,15)(15,15)
      \qbezier(15,0)(15,0)(10,5)
      \qbezier(5,10)(0,15)(0,15)
    \put(5,10){$\ast$}
    \end{picture}
  \end{minipage}
}
\def\diaNegTouchTB{\unitlength.1em
  \begin{minipage}{15\unitlength}
    \begin{picture}(15,15)
      \qbezier(0,0)(15,15)(15,15)
      \qbezier(15,0)(15,0)(10,5)
      \qbezier(5,10)(0,15)(0,15)
    \put(5,10){$\ast$}
    \put(5,0){$\ast$}
    \end{picture}
  \end{minipage}
}
\makeatletter\@namedef{subjclassname@2020}{%
 \textup{2020} Mathematics Subject Classification}
\begin{document}
\title[Integral region choice problems]
{Integral region choice problems \\ on link diagrams}
\author{Tomomi Kawamura}
\address{
Graduate school of Mathematics, 
Nagoya University \\
Furocho, Chikusaku, Nagoya 464-8602, JAPAN }
\email{
tomomi@math.nagoya-u.ac.jp 
}

\subjclass[2020]{57K10}
\keywords{
region choice problem, Alexander numbering, checkerboard coloring, 
region choice matrix
}
\date{\today}

\begin{abstract}
Shimizu introduced a region crossing change unknotting operation for knot diagrams. 
As extensions, 
two integral region choice problems were proposed and 
the existences of solutions of the problems were shown for all non-trivial knot diagrams 
by Ahara and Suzuki, and Harada. 
We relate both integral region choice problems with 
an Alexander numbering for regions of a link diagram,
and  give alternative proofs of 
the existences of solutions for knot diagrams. 
We also discuss   
the problems on link diagrams. 
For each of the problems on the diagram of a two-component link, 
we give a neccessary and sufficient condition that 
there exists a solution. 
\end{abstract}

\maketitle


\section{Introduction}\label{sect;intro}

A \emph{link} is a closed 1-manifold 
smoothly embedded in the 3-space $\mathbb{R}^3$ or in the 3-sphere $S^3$ 
and a \emph{knot} is a link with one component. 
A link in the 3-space is presented as the natural projection image on the 2-plane $\mathbb{R}^2$ 
where the singular points are transverse double points with over/under information. 
This presentation  is called a \emph{link diagram} or a \emph{diagram} of the link. 
A \emph{diagram} of a link in the 3-sphere $S^3=\mathbb{R}^3\cup \{ \infty\}$ 
is given on the 2-sphere $S^2=\mathbb{R}^2\cup \{ \infty \}$ similarly. 
For each link diagram, a connected component of the complement of the projection image on $\mathbb{R}^2$ or $S^2$ 
is called a \emph{region}. 

In \cite{shimizu}, 
Shimizu 
defined a \emph{region crossing change} at a region for a diagram
to be the crossing change at all the crossings on the boundary of the region 
as an unknotting operation for a knot diagram, 
which was proposed by Kengo Kishimoto. 
For example in Figure \ref{Fig;RCC4_1}, 
the left diagram is changed to the right  diagram, 
choosing the region marked with $\ast$ as illustrated on the middle 
and changing the three crossings on the boundary of the marked region.  
In \cite{cheng, chenggao}, 
Cheng and Gao 
gave a necessary and sufficient condition 
that a region crossing change is an unknotting operation on a link diagram. 

\begin{figure}[htbp]
\begin{center}
\includegraphics[height=3cm,clip]{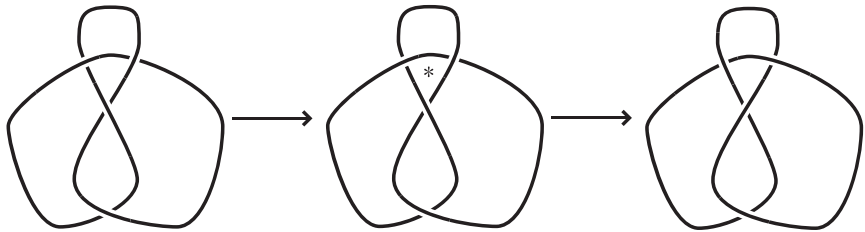}
\end{center}
\caption{An example of a region crossing change.}
\label{Fig;RCC4_1}
\end{figure}

It is known that a region crossing change can be interpreted as follows. 
We call a diagram ignored over/under information a \emph{projection}. 
Let each crossing of the given projection be equipped with a score $0$ or $1$ modulo $2$. 
We choose a region of the projection. 
Then the scores of all the crossings on its boundary are increased by $1$ modulo $2$. 
For example, 
the region crossing change illustrated on Figure \ref{Fig;RCC4_1} is interpreted 
as Figure \ref{Fig;Z2RC4_1}. 
Shimizu showed that the scores of all the crossings on any knot diagram become $0$ 
by some choices of regions.
Cheng and Gao induced a $\mathbb{Z}_2$-homomorphism 
from region crossing changes on link diagrams. 
In \cite{hashizume2013, hashizume2015}, 
Hashizume studied structures of their $\mathbb{Z}_2$-homomorphism.  

\begin{figure}[htbp]
\begin{center}
\includegraphics[height=3cm,clip]{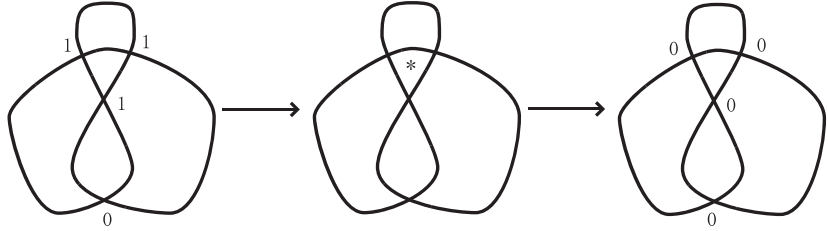}
\end{center}
\caption{An example of an interpreted region crossing change.}
\label{Fig;Z2RC4_1}
\end{figure}

As an extension of a region crossing  change to an integral range, 
Ahara and Suzuki proposed an integral region choice problem  
and showed the existence of  a solution of this problem for all knot projections 
in \cite{aharasuzuki}. 
Let each crossing of the given projection be equipped with an integral score. 
We choose a region of the projection and assign an integer $u$ to it. 
Then the scores of all the crossings on its boundary are increased by $u$. 
For example in Figure \ref{Fig;DZRC4_1}, 
the scores of the crossings on the left projection are changed to the right, 
assigning integers to regions as the middle projection; 
$1\mapsto 1+0+2+(-1)+(-2)=0$,  $-1\mapsto -1+0+0+(-1)+2=0$, 
$3\mapsto 3+0+0+(-2)+(-1)=0$, and $2\mapsto 2+0+0+0+(-2)=0$. 
Ahara and Suzuki showed that 
the scores of all the crossings on any knot projection become $0$ 
by some choices of regions and some assignments of the integers to them. 
We shall call their problem  
a \emph{definite integral region choice problem}. 
In Section \ref{sect;DZRCP}, we state their result exactly. 

\begin{figure}[htbp]
\begin{center}
\includegraphics[height=3cm,clip]{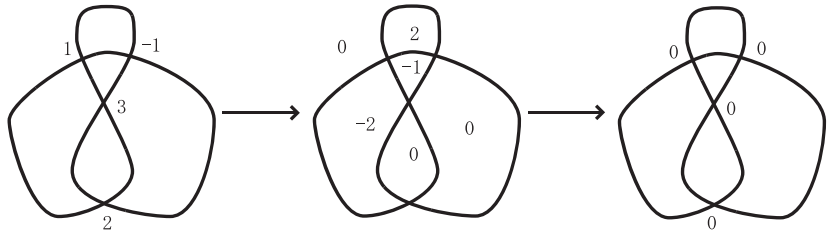}
\end{center}
\caption{An example of a definite integral region choice problem.}
\label{Fig;DZRC4_1}
\end{figure}

By an argument similar to that due to Ahara and Suzuki, 
Harada showed in his master thesis \cite{haradaM} that 
there exists a solution of an alternating integral region choice problem for all knot diagrams, 
which was suggested by Yasuyoshi Yonezawa. 
Let each crossing of the given diagram be equipped with an integral score. 
We choose a region of the projection and assign an integer $u$ to it. 
Then the score of each crossing on its boundary is changed as follows.  
If the region touches the crossing at the corner contained by the underpass and the overpass counterclockwise, 
that means 
$\diaPosTouchL$ or $\diaPosTouchR$ , 
the score of  the crossing is increased by $u$. 
If the region touches the crossing at the corner contained by the underpass and the overpass clockwise, 
that means 
$\diaNegTouchT$ or $\diaNegTouchB$ , 
the score of  the crossing is decreased by $u$. 
For example in Figure \ref{Fig;AZRC4_1}, 
the scores of the crossings on the left diagram are changed to the right, 
assigning integers to regions as the middle diagram; 
$1\mapsto 1+0-(-2)+(-1)-2=0$,  $-1\mapsto -1+0-0+(-1)-(-2)=0$, 
$3\mapsto 3-0+0-2+(-1)=0$, and $2\mapsto 2+0-0+0-2=0$. 
Harada showed that 
the scores of all the crossings on any knot diagram become $0$ 
by some choices of regions and some assignments of the integers to them. 
We shall call this proposed problem as another extension of a region crossing change
an \emph{alternating integral region choice problem}. 
In Section \ref{sect;AZRCP}, we state his result exactly. 

\begin{figure}[htbp]
\begin{center}
\includegraphics[height=3cm,clip]{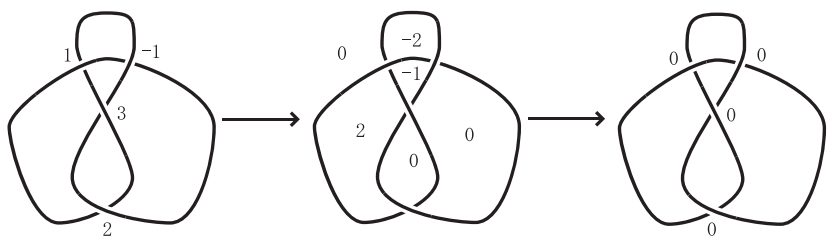}
\end{center}
\caption{An example of an alternating integral region choice problem.}
\label{Fig;AZRC4_1}
\end{figure}

In \cite{aharasuzuki, haradaM}, 
Ahara, Suzuki and Harada reduced the above integral region choice problems to systems of  linear equations, 
as explained in Section \ref{sect;DZRCP} and \ref{sect;AZRCP} in this article, 
and they showed the existences of  solutions for non-trivial knot diagrams. 
We show that 
an Alexander numbering for regions of a link diagram is 
a solution of the system of homogeneous linear equations 
reduced from an alternating integral region choice problem in Section \ref{sect;kernel}. 
By this result, we give alternative proofs of the existences of solutions of 
both alternating and definite integral region choice problems 
for all non-trivial knot diagrams in Section \ref{sect;AZRCPpf} and \ref{sect;DZRCPpf}. 

In \cite{shimizu}, 
Shimizu used checkerboard colorings to regions of knot diagrams 
for showing that a region crossing change is an unknotting operation . 
Cheng and Gao \cite{chenggao}, and Hashizume \cite{hashizume2013, hashizume2015} 
also used checkerboard colorings
for discussing region crossing changes on link diagrams. 
An Alexander numbering is an integral extension of a checkerboard coloring, 
as mentioned in Section \ref{sect;pre}. 
In this article, 
we use Alexander numberings to discuss the integral region choice problems on link diagrams, 
which are integral extensions of region crossing changes.  

In Section \ref{sect;RCmatrix} and \ref{sect;RCmatrixS},  
we determine the ranks for the coefficient matrices
of the systems of linear equations reduced from the integral region choice problems,  
applying the arguments in the original proofs of 
the solvability of integral region choice problems on knot diagrams
in \cite{aharasuzuki, haradaM} to link diagrams.   
Then we obtain an extension of  the result about the incidence matrix due to Cheng and Gao \cite{chenggao}. 

In Section \ref{sect;basis}, 
we give a basis of the space of 
solutions of the system of homogeneous linear equations 
reduced from each of integral region choice problems on link diagrams.  
In Section \ref{sect;image}, 
we give neccessary and sufficient conditions that 
there exist solutions of integral region choice problems 
on the connected diagram of a two-component link.  
These results are extentions of some of the results about region crossing changes on link diagrams 
due to Cheng and Gao \cite{chenggao}, and Hashizume \cite{hashizume2013, hashizume2015}.

\begin{ack}
The author would like to thank Megumi Hashizume 
for giving valuable advices and a lot of information about a region crossing change. 
The author also would like to thank Yasuyoshi Yonezawa 
for his suggestion and 
Shingo Harada for his works on an alternating integral region choice problem. 
She is also grateful to Akio Kawauchi 
for his helpful comments. 
\end{ack}

\section{Preliminary}\label{sect;pre}


By the Jordan curve theorem, 
any short arc without a crossing on a link diagram 
lies on the intersection of just two boundaries of regions. 
Each crossing is touched by at most four regions. 
If the number of the regions touching the fixed crossing is less than four, 
it must be three and 
the pair of the corners of the same region touching the crossing 
are not adjacent each other around the crossing. 
This fact is also shown from the Jordan curve theorem. 
In this case, such a crossing is called a \emph{reducible} crossing. 
If a link diagram have a reducible crossing,  it is called a \emph{reducible} diagram. 
Otherwise, it is called an \emph{irreducible} diagram. 

\begin{lemma}[cf. \cite{aharasuzuki, hashizume2015}]\label{lem;Euler}
Let $D$ be a link diagram or projection. 
If $D$ has $d$ connected components and $n$ crossings, 
then it has $n+d+1$ regions. 
\end{lemma}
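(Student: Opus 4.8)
The plan is to use the Euler characteristic of the sphere. First I would compactify $\mathbb{R}^2$ to $S^2 = \mathbb{R}^2 \cup \{\infty\}$; this does not change the number of regions, since the point $\infty$ is simply absorbed into the unbounded region, so it suffices to count regions on $S^2$. I then regard the underlying projection of $D$ as a graph embedded in $S^2$ whose vertices are the $n$ crossings, whose edges are the arcs joining consecutive crossings, and whose faces (the complementary $2$-cells) are exactly the regions. The relation $V - E + F = 1 + d$ holds for any graph with $d$ connected components embedded in $S^2$: adjoining $d-1$ bridge edges joins the components into a single connected graph without creating any new face, so the connected Euler relation $V - E + F = 2$ applied to the enlarged graph (which has $E + d - 1$ edges) yields the stated formula after accounting for the added edges.

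Next I would compute $V$ and $E$. Every crossing is $4$-valent, so the $n$ crossings account for $4n$ half-edges and hence $E = 2n$ edges, while $V = n$. The only issue is a connected component of $D$ carrying no crossing: such a component is an embedded circle contributing no vertex and no edge. I would repair this by placing one auxiliary $2$-valent vertex on each of the $k$ crossing-free components, turning each into a single vertex carrying one loop edge; this adds $k$ to both $V$ and $E$ and changes neither the number of regions nor the number $d$ of connected components. With these adjustments $V = n + k$ and $E = 2n + k$, and substituting into $V - E + F = 1 + d$ gives $F = 1 + d - (n+k) + (2n+k) = n + d + 1$, as required.

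The point requiring care is bookkeeping rather than any deep idea: one must confirm that the combinatorial faces of the embedded graph coincide with the regions, so that no region is missed and none is double-counted, that the crossing-free components are correctly incorporated via the auxiliary vertices, and that the generalized Euler relation is applied with exactly $d$ components. A fully elementary alternative, which I would keep in reserve, is induction on $n$: the base case $n = 0$ is $d$ disjoint simple closed curves bounding $d+1$ regions by the Jordan curve theorem, and resolving a single crossing decreases $n$ by one while changing $d$ and the region count in a compensating way that preserves $n + d + 1$. I expect the Euler characteristic argument to be the cleaner of the two, and the main obstacle is simply the careful treatment of the disconnected and crossing-free cases.
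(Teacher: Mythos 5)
Your proof is correct and follows the same route as the paper, whose entire proof is the one-line remark that the statement follows from the Euler formula; you have simply supplied the details (the $4$-valent graph structure giving $V=n$, $E=2n$, the Euler relation $V-E+F=1+d$ for a $d$-component graph in $S^2$, and the auxiliary vertices on crossing-free components). The bookkeeping all checks out, so no further changes are needed.
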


\begin{proof}
It is shown by the Euler formula. 
\end{proof}

On an oriented link diagram $D$, 
we say that we \emph{splice} at a crossing $x$ 
if we change the diagram $D$ around the crossing $x$, $\diaCrossP$ or $\diaCrossN$, to $\diaSmooth$ 
and obtain the new link diagram $D_x$.   
This local move between oriented link diagrams 
is called a \emph{splicing} or \emph{smoothing} at the crossing $x$. 
The change from $D_x$ to $D$ is called an \emph{unsplicing} at $x$. 
In this article, 
the local moves $\diaProjected$ to $\diaSmooth$ and $\diaSmooth$ to $\diaProjected$
among oriented link projections are also called 
a \emph{splicing} and an \emph{unsplicing} respectively. 

In \cite{alexander}, 
Alexander assigned an integer index to each region of an oriented link diagram or projection, 
so that for any oriented arc on the link diagram,  
an index of the left region adjacent to the arc is larger that of the right by one. 
Such an index is called an \emph{Alexander index}, 
and this assignment of the indexes is called an \emph{Alexander indexing} or an \emph{Alexander numbering}. 
In \cite{kauffman2006}, 
Kauffman also defined an Alexander indexing for an oriented link projection 
and show that there exist an Alexander indexing for any projection, 
though an index of the right region is assigned larger than that of the left by one  
for any oriented arc on the link projection. 

It is known that 
we can shade regions  for any link projection 
so that each two regions adjacent by an arc on the projection are shaded and unshaded, 
and such shading is call a \emph{checkerboard coloring}. 
For any oriented link diagram or projection, 
if we shade only the regions assigned odd number by an Alexander numbering, 
then we obtain a checkerboard coloring.   
If we reverse the orientation of some link components fixing a region and its index, 
we obtain a new Alexander numbering and 
the same checkerboard coloring. 
In this article, 
we shall call an Alexander numbering modulo $2$ a \emph{checkerboard coloring}.


\section{A definite integral region choice problem}\label{sect;DZRCP}


Let $D$ be a link diagram or projection 
with $d$ connected components and $n$ crossings $x_1, \cdots , x_n$, $n\geq 1$. 
We note that $d$ is not greater than the number of the link components. 
Let $R_1, \cdots , R_{n+d+1}$ be the regions of $D$. 
In \cite{aharasuzuki}, 
Ahara and Suzuki induced two region choice matrices $A_{d1}(D)$ and $A_{d2}(D)$ 
with $n$ rows and $n+d+1$ columns as follows, 
where they denoted them by $A_1(D)$ and $A_2(D)$. 
We determine each element $a^{(d1)}_{ij}$ by 
\[
a^{(d1)}_{ij}=
\begin{cases}
1 & \text{if $x_i \in \partial R_j$}, \\
0 & \text{if $x_i \notin \partial R_j$} .
\end{cases}
\]
The \emph{region choice matrix of the single counting rule} for $D$ is 
the matrix $A_{d1}(D)$ with the element $a^{(d1)}_{ij}$ on the $i$-th row and the $j$-th column. 
We determine each element $a^{(d2)}_{ij}$ by 
\[
a^{(d2)}_{ij}=
\begin{cases}
2 & \text{if $R_j$ touches $x_i$ twice}, \\
a^{(d1)}_{ij} & \text{otherwise} .
\end{cases}
\]
The \emph{region choice matrix of the double counting rule} for $D$ is 
the matrix $A_{d2}(D)$ with the element $a^{(d2)}_{ij}$ on the $i$-th row and the $j$-th column. 
We shall call these two region choice matrices by 
the \emph{definite region choice matrices}. 

Using the definite region choice matrices, 
the definite integral region choice problem and the existence of solutions for it 
are stated as follows.  

\begin{theorem}[\cite{aharasuzuki}]\label{thm;aharasuzuki;mat}
Let $D$ be a knot diagram or projection with $n$ crossings $x_1, \cdots , x_n$, $n\geq 1$. 
Let $R_1, \cdots , R_{n+2}$ be the regions of $D$. 
\begin{enumerate}
\item
Let $A_{d1}(D)$ be the definite region choice matrix of the single counting rule for $D$. 
For any $\mathbf{c}\in \mathbb{Z}^n$, 
there exists a solution $\mathbf{u}\in \mathbb{Z}^{n+2}$ 
such that $A_{d1}(D)\mathbf{u}+\mathbf{c}=\mathbf{0}$. 
\item
Let $A_{d2}(D)$ be the definite region choice matrix of the double counting rule for $D$. 
For any $\mathbf{c}\in \mathbb{Z}^n$, 
there exists a solution $\mathbf{u}\in \mathbb{Z}^{n+2}$ 
such that $A_{d2}(D)\mathbf{u}+\mathbf{c}=\mathbf{0}$. 
\end{enumerate}
\end{theorem}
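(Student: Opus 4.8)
The plan is to interpret each statement as the surjectivity of the integer linear map $A_{dk}(D)\colon\mathbb{Z}^{n+2}\to\mathbb{Z}^{n}$ for $k=1,2$, the number $n+2$ of columns being the count of regions for a knot ($d=1$) given by Lemma~\ref{lem;Euler}. Because $A_{dk}(D)$ has more columns than rows, surjectivity onto $\mathbb{Z}^{n}$ is equivalent, through the Smith normal form, to all of its elementary divisors being equal to $1$, i.e.\ to the greatest common divisor of the $n\times n$ minors being $1$; equivalently, $A_{dk}(D)\bmod p$ has rank $n$ over $\mathbb{F}_{p}$ for every prime $p$. So the problem divides into (a) full row rank $n$ over $\mathbb{Q}$ and (b) the absence of any rank drop modulo a prime, that is, torsion-freeness of the cokernel.

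For the rank I would first orient $D$ and pass to the signed incidence obtained by rescaling the columns of $A_{dk}(D)$ by the checkerboard signs $(-1)^{\alpha_{R_j}}$ of an Alexander numbering $\alpha$. Around an irreducible crossing the four incident regions have Alexander indices $i,i+1,i,i-1$ in cyclic order, so the two corners of one colour are opposite and carry equal-parity indices; rescaling the columns by $(-1)^{\alpha_{R_j}}$, together with one sign per crossing on the rows, turns the all-ones row of $A_{d1}(D)$ into the alternating pattern $+,-,+,-$ around the crossing. Both diagonal rescalings are unimodular, so $A_{d1}(D)$ is $\mathbb{Z}$-equivalent to the coefficient matrix of the alternating problem, with the analogous bridge for $A_{d2}(D)$ (the two differing only at reducible crossings). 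This is the bridge announced in the introduction, and it produces an explicit kernel vector: the alternating sum $i-(i+1)+i-(i-1)=0$ around each crossing shows that the Alexander numbering lies in the kernel of the alternating matrix, hence its checkerboard rescaling lies in $\ker A_{dk}(D)$.

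With a nonzero kernel vector in hand I would fix the rank at exactly $n$, so that $\ker$ has the expected dimension $2$, and then address (b). A rank drop modulo $p$ produces a nonzero weighting $\mathbf{v}\in\mathbb{F}_{p}^{n}$ of the crossings whose ($\pm$-signed) sum around every region is zero. The essential input is that a knot projection is a single immersed circle, hence a connected four-valent plane graph; I would use the relations coming from pairs of regions sharing an edge, together with the vanishing sums around each crossing, to propagate the value of $\mathbf{v}$ through the connected diagram, force it to be constant, and finally $0$. The prime $p=2$ is the checkerboard case and recovers Shimizu's $\mathbb{Z}_{2}$ surjectivity for knots; for odd $p$ the argument uses the orientation recorded by the Alexander indices.

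The main obstacle is step (b): full rank modulo every prime, rather than merely over $\mathbb{Q}$. Exhibiting a kernel of the expected dimension makes rank $n$ over $\mathbb{Q}$ routine, but excluding cokernel torsion uniformly in $p$ is delicate, and it is exactly here that connectivity of the single knot component is used; for general link diagrams the corresponding statement fails, in agreement with the necessary and sufficient conditions developed later in the paper. A secondary difficulty is the bookkeeping at reducible crossings, where a region meets a crossing at two opposite corners: there the single and double counting rules genuinely differ, so both the unimodular bridge and the around-a-crossing index computation must be re-checked in that case.
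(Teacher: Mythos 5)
Your reduction of surjectivity to ``full row rank modulo every prime'' is sound in principle, but the two steps that would actually carry the proof are both left open, so what you have is a strategy outline rather than a proof. First, exhibiting a kernel vector does not ``fix the rank at exactly $n$'': for an $n\times(n+2)$ integer matrix the kernel has rank at least $2$ automatically, so producing the Alexander-numbering kernel vector gives no lower bound on $\mathrm{rank}\,A_{dk}(D)$ at all. A separate argument is needed to see that the rank is $n$ for a knot diagram; in the paper this is Theorem \ref{thm;rank}, proved by tracking the matrices through crossing changes and Reidemeister moves down to the one-crossing diagram, and it is logically downstream of the surjectivity theorem, not an ingredient of its proof. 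Second, the absence of $p$-torsion in the cokernel --- which you correctly flag as the main obstacle --- is essentially the whole content of the theorem, and ``propagate the value of $\mathbf{v}$ through the connected diagram'' is not yet an argument: for the definite matrix a left-kernel vector mod $p$ satisfies $\sum_{x\in\partial R}v_x=0$ for every region $R$, and it is not clear how adjacency of regions forces $\mathbf{v}$ to be constant, especially for odd $p$ where the checkerboard trick is unavailable. Your bridge observations (checkerboard rescaling between definite and alternating matrices, Alexander numberings as kernel elements) are correct and do appear in the paper as Lemmas \ref{lem;AlexindKer}, \ref{lem;KerAtoD} and \ref{lem;KerAtoDsingle}, but they relate the two problems to each other; they do not by themselves establish surjectivity of either.

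The paper's proof avoids both difficulties by being entirely constructive. For each crossing $x_i$ it builds an explicit $\mathbf{v}_i\in\mathbb{Z}^{n+2}$ with $A_{d2}(D)\mathbf{v}_i=\mathbf{e}_i$ (Theorem \ref{thm;add1DZRC}): splice $D$ at $x_i$, take the componentwise Alexander numbering of the spliced diagram normalized to $0$ and $1$ on the two sides of the new arc, unsplice, and convert the resulting alternating-rule vector to a definite-rule one by the checkerboard sign change of Lemma \ref{lem;KerAtoD}. The solution is then simply $\mathbf{u}=-\sum_i c_i\mathbf{v}_i$, and no rank or torsion analysis is needed. For part (1), note that $A_{d1}(D)$ and $A_{d2}(D)$ are \emph{not} related by a unimodular rescaling --- at a reducible crossing they differ by an entry $1$ versus $2$, not by a sign --- so the paper instead uses the correction identity
$A_{d2}(D)\mathbf{r}_j-A_{d1}(D)\mathbf{r}_j=\sum_{y\in\mathcal{X}_j}A_{d1}(D)\mathbf{v}_y$
over the reducible crossings $y$ touched twice by $R_j$ (Lemma \ref{lem;add1DZRCred}) to convert a double-counting solution into a single-counting one. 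If you wanted to complete your Smith-normal-form route you would end up reproving these constructions in order to rule out torsion, so the direct construction of preimages is both shorter and strictly stronger.
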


\begin{example}
Let $D$ be the knot projection given in  Figure \ref{Fig;DZRC4_1}. 
Under certain orders of crossings and regions, we have 
\[
A_{d1}(D)=A_{d2}(D)=
\begin{pmatrix}
1 & 1 & 1 & 0 & 0 & 1 \\
1 & 1 & 0 & 0 & 1 & 1 \\
0 & 1 & 1 & 1 & 1 & 0 \\
0 & 0 & 1 & 1 & 1 & 1
\end{pmatrix}
.
\]
Figure \ref{Fig;DZRC4_1} implies the equation   
\[
A_{di}(D)
\begin{pmatrix}
2 \\ -1 \\ -2 \\ 0 \\ 0 \\ 0 
\end{pmatrix}
+
\begin{pmatrix}
1 \\ -1 \\ 3 \\ 2 
\end{pmatrix}
=
\begin{pmatrix}
0 \\ 0 \\ 0 \\ 0
\end{pmatrix}
\]
holds for $i=1, 2$.
\end{example}

If we transpose the incidence matrices 
induced by Cheng and Gao \cite{chenggao} and Hashizume \cite{hashizume2013}, 
it is same as the definite region choice matrix of the single counting rule 
modulo $2$ up to permutations of rows and columns. 

\section{An alternating integral region choice problem}\label{sect;AZRCP}


Let $D$ be a link diagram with $d$ connected components and $n$ crossings $x_1, \cdots , x_n$, $n\geq 1$. 
Let $R_1, \cdots , R_{n+d+1}$ be the regions of $D$. 
In \cite{haradaM}, 
Harada induced two region choice matrices $A_{a1}(D)$ and $A_{a2}(D)$ 
with $n$ rows and $n+d+1$ columns as follows, 
where he denoted them by $B_1(D)$ and $B_2(D)$. 

We determine each elements $a^{(a1)}_{ij}$ as follows. 
We define $a^{(a1)}_{ij}=1$, 
if the region $R_j$ touches the crossing $x_i$ at the corner contained by the underpass and the overpass counterclockwise, 
that means 
$\diaPosTouchL$ or $\diaPosTouchR$ . 
We define $a^{(a1)}_{ij}=-1$, 
if $R_j$ touches $x_i$ at the corner contained by the underpass and the overpass clockwise, 
that means 
$\diaNegTouchT$ or $\diaNegTouchB$ . 
If $x_i$ does not lie on $\partial R_j$, we define $a^{(a1)}_{ij}=0$. 
The \emph{alternating region choice matrix of the single counting rule} for $D$ is 
the matrix $A_{a1}(D)$ with the element $a^{(a1)}_{ij}$ on the $i$-th row and the $j$-th column. 
We determine each element $a^{(a2)}_{ij}$ by 
\[
a^{(a2)}_{ij}=
\begin{cases}
2a^{(a1)}_{ij} & \text{if $R_j$ touches $x_i$ twice as $\diaPosTouchLR$ or $\diaNegTouchTB$} , \\
a^{(a1)}_{ij} & \text{otherwise} .
\end{cases}
\]
The \emph{alternating region choice matrix of the double counting rule} for $D$ is 
the matrix $A_{a2}(D)$ with the element $a^{(a2)}_{ij}$ on the $i$-th row and the $j$-th column. 

We compare the definitions of 
$a_{ij}^{(d1)}, a_{ij}^{(d2)}, a_{ij}^{(a1)}, a_{ij}^{(a2)}$ on Table 1, 
where the region $R_j$ includes the corners marked with $\ast$ 
but does not include the unmarked corners 
around the crossing $x_i$. 

\begin{table}[htbp]
\begin{center}
\begin{tabular}{l| c c c c c}
$x_i$ and $Rj$ & \ $\diaPosTouchLR$ \  & \ $\diaPosTouchL$ or $\diaPosTouchR$ \  
& \ $\diaNegTouchT$ or $\diaNegTouchB$ \  & \ $\diaNegTouchTB$ \  & otherwise \\[7pt]
\hline 
$a_{ij}^{(d1)}$ & $1$ & $1$ & $1$ & $1$ & $0$ \\[5pt]
$a_{ij}^{(d2)}$ & $2$ & $1$ & $1$ & $2$ & $0$ \\[5pt]
$a_{ij}^{(a1)}$ & $1$ & $1$ & $-1$ & $-1$ & $0$ \\[5pt]
$a_{ij}^{(a2)}$ & $2$ & $1$ & $-1$ & $-2$ & $0$ 
\end{tabular}
\end{center}
\caption{
$a_{ij}^{(d1)}, a_{ij}^{(d2)}, a_{ij}^{(a1)}, a_{ij}^{(a2)}$.}
\end{table}

Using the alternating region choice matrices, 
the alternating integral region choice problem and the existence of solutions for it 
are stated as follows.  

\begin{theorem}[\cite{haradaM}]\label{thm;harada;mat}
Let $D$ be a knot diagram with $n$ crossings $x_1, \cdots , x_n$, $n\geq 1$. 
Let $R_1, \cdots , R_{n+2}$ be the regions of $D$. 
\begin{enumerate}
\item
Let $A_{a1}(D)$ be the alternating region choice matrix of the single counting rule for $D$. 
For any $\mathbf{c}\in \mathbb{Z}^n$, 
there exists a solution $\mathbf{u}\in \mathbb{Z}^{n+2}$ 
such that $A_{a1}(D)\mathbf{u}+\mathbf{c}=\mathbf{0}$. 
\item
Let $A_{a2}(D)$ be the alternating region choice matrix of the double counting rule for $D$. 
For any $\mathbf{c}\in \mathbb{Z}^n$, 
there exists a solution $\mathbf{u}\in \mathbb{Z}^{n+2}$ 
such that $A_{a2}(D)\mathbf{u}+\mathbf{c}=\mathbf{0}$. 
\end{enumerate}
\end{theorem}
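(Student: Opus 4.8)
The plan is to reduce the alternating problem of Theorem~\ref{thm;harada;mat} to the definite problem of Theorem~\ref{thm;aharasuzuki;mat} by exploiting an Alexander numbering: I claim that an Alexander numbering turns $A_{a1}(D)$ and $A_{a2}(D)$ into the corresponding definite matrices after multiplication by invertible diagonal sign matrices, so that integral surjectivity of the alternating matrices is literally equivalent to integral surjectivity of the definite ones. First I would fix an orientation of the knot $D$ and an Alexander numbering $v_1,\dots,v_{n+2}$ of its regions $R_1,\dots,R_{n+2}$, which exists by the discussion in Section~\ref{sect;pre}, and I would set $\epsilon_j=(-1)^{v_j}\in\{\pm1\}$, the associated checkerboard coloring of $R_j$.

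The key step is a purely local computation at each crossing $x_i$. Around $x_i$ the four corners carry Alexander indices $k,k+1,k+1,k+2$ for some integer $k$, the corners of indices $k$ and $k+2$ being opposite to one another and those of index $k+1$ being opposite to one another. Using the orientation conventions recorded in the figures $\diaPosTouchL$, $\diaPosTouchR$, $\diaNegTouchT$, $\diaNegTouchB$, I would verify that the two counterclockwise corners (those with $a^{(a1)}_{ij}=+1$) constitute one of these opposite pairs, and hence share a single parity of Alexander index, while the two clockwise corners (those with $a^{(a1)}_{ij}=-1$) constitute the other pair and share the opposite parity. Writing $(-1)^{p_i}$ for the common value of $\epsilon_j$ as $R_j$ ranges over the counterclockwise corners at $x_i$, this yields
\[
a^{(a1)}_{ij}\,\epsilon_j=(-1)^{p_i}\,a^{(d1)}_{ij}\qquad(1\le i\le n,\ 1\le j\le n+2),
\]
and the analogous identity with the superscripts $a2,d2$ in place of $a1,d1$. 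The reducible corners $\diaPosTouchLR$ and $\diaNegTouchTB$, where one region meets $x_i$ twice, must be treated separately: such a region necessarily occupies the equal-index opposite pair, so both of its corners share a parity and the weights of Table 1 obey the same rule. Summing the displayed identity over $j$ recovers, as a special case, the statement that the Alexander numbering $\mathbf v$ lies in the kernel of $A_{a1}(D)$.

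In matrix form the identities read
\[
A_{a1}(D)=F\,A_{d1}(D)\,E,\qquad A_{a2}(D)=F\,A_{d2}(D)\,E,
\]
where $E=\mathrm{diag}(\epsilon_1,\dots,\epsilon_{n+2})$ and $F=\mathrm{diag}\bigl((-1)^{p_1},\dots,(-1)^{p_n}\bigr)$ are diagonal matrices with entries $\pm1$, hence unimodular over $\mathbb Z$. Given $\mathbf c\in\mathbb Z^n$, Theorem~\ref{thm;aharasuzuki;mat}(1) provides $\mathbf w\in\mathbb Z^{n+2}$ with $A_{d1}(D)\mathbf w+F\mathbf c=\mathbf 0$; then $\mathbf u=E\mathbf w\in\mathbb Z^{n+2}$ satisfies $A_{a1}(D)\mathbf u+\mathbf c=F A_{d1}(D)\mathbf w+\mathbf c=\mathbf 0$, which proves part~(1). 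Part~(2) follows identically from Theorem~\ref{thm;aharasuzuki;mat}(2) with $A_{d2}(D)$ in place of $A_{d1}(D)$. Since $E$ and $F$ are invertible over $\mathbb Z$, the relation is symmetric, so the same argument in reverse derives the definite statement from the alternating one.

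The main obstacle is the local sign analysis underlying the displayed identities. One must pin down, consistently with the orientation conventions encoded in the touching diagrams $\diaPosTouchL$, $\diaPosTouchR$, $\diaNegTouchT$, $\diaNegTouchB$, that the counterclockwise corners always realize one parity class of the Alexander index and the clockwise corners the other, and one must check the reducible crossings $\diaPosTouchLR$, $\diaNegTouchTB$ together with the double-counting weights of Table 1 obey the same rule. Once this bookkeeping is complete, the unimodularity of $E$ and $F$ makes the reduction to Theorem~\ref{thm;aharasuzuki;mat} immediate.
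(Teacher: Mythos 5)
Your proof is correct, but it takes a genuinely different route from the paper's. The conjugation identity $A_{a i}(D)=F\,A_{d i}(D)\,E$ ($i=1,2$) does hold: at every crossing the two corners weighted $+1$ in the alternating rule form one opposite pair and the two weighted $-1$ form the other, opposite corners receive the same checkerboard colour while adjacent ones differ, and a region meeting a crossing twice occupies an opposite (hence monochromatic) pair, so the doubled entries of Table 1 transform by the same rule; note that only the checkerboard coloring (the Alexander numbering modulo $2$) is needed here, not the full numbering. Since $E$ and $F$ are involutive sign matrices, solvability of $A_{d i}(D)\mathbf{w}+F\mathbf{c}=\mathbf{0}$ transfers to $A_{a i}(D)\mathbf{u}+\mathbf{c}=\mathbf{0}$ with $\mathbf{u}=E\mathbf{w}$, and Theorem \ref{thm;aharasuzuki;mat} yields Theorem \ref{thm;harada;mat}. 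The paper instead proves the alternating statement \emph{directly}: Lemma \ref{lem;compAlex} exhibits componentwise Alexander numberings as kernel solutions, Theorem \ref{thm;add1AZRC} splices at a crossing, numbers the spliced diagram, and unsplices to produce a preimage of each standard basis vector (giving part (2)), and Lemma \ref{lem;add1AZRCred} handles reducible crossings to pass to the single counting rule (giving part (1)); the definite Theorem \ref{thm;aharasuzuki;mat} is then \emph{deduced from} the alternating one in Section \ref{sect;DZRCPpf} by exactly your sign twist run in the opposite direction (see Lemmas \ref{lem;KerAtoD}--\ref{lem;KerDtoA}). Two consequences of this difference are worth noting. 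First, within the paper's own logical development your argument would be circular, since the paper's proof of Theorem \ref{thm;aharasuzuki;mat} rests on Theorem \ref{thm;harada;mat}; your reduction is legitimate only because Ahara and Suzuki's theorem has an independent proof in the literature, so you should say explicitly that you are importing it as a black box rather than using the paper's version. Second, what your approach buys is brevity and a transparent statement of the equivalence of the two problems (which the paper only establishes later, via Theorem \ref{thm;image} and the surrounding remarks), while the paper's approach buys a self-contained geometric proof of both theorems from the single input of Alexander numberings, which is its stated purpose.
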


\begin{example}
Let $D$ be the knot diagram given in Figure \ref{Fig;AZRC4_1}. 
Under certain orders of crossings and regions, we have 
\[
A_{a1}(D)=A_{a2}(D)=
\begin{pmatrix}
-1 & 1 & -1 & 0 & 0 & 1 \\
-1 & 1 & 0 & 0 & -1 & 1 \\
0 & 1 & -1 & 1 & -1 & 0 \\
0 & 0 & -1 & 1 & -1 & 1
\end{pmatrix}
.
\]
Figure \ref{Fig;AZRC4_1} implies the equation   
\[
A_{ai}(D)
\begin{pmatrix}
-2 \\ -1 \\ 2 \\ 0 \\ 0 \\ 0 
\end{pmatrix}
+
\begin{pmatrix}
1 \\ -1 \\ 3 \\ 2 
\end{pmatrix}
=
\begin{pmatrix}
0 \\ 0 \\ 0 \\ 0
\end{pmatrix}
\]
holds for $i=1, 2$.
\end{example}

\begin{remark}
If we transpose the incidence matrix 
induced by Cheng and Gao \cite{chenggao} and Hashizume \cite{hashizume2013}, 
it is same as the alternating region choice matrix of the single counting rule 
modulo $2$ up to permutations of rows and columns. 
\end{remark}

\begin{remark}
In this article, we reverse signs of the elements in 
the alternating region choice matrices defined by Harada \cite{haradaM}, 
since our alternating region choice matrix of the double counting rule 
coincides with the Alexander matrix 
defined in \cite{alexander} 
if we substitute $1$ for the variable. 
In \cite{kauffman2006}, 
Kauffman illustrated the definition of the Alexander matrix as a crossing with labeled corners   
  \begin{minipage}{15\unitlength}
    \begin{picture}(15,15)
      \qbezier(0,0)(15,15)(15,15)
      \qbezier(15,0)(15,0)(10,5)
      \qbezier(5,10)(0,15)(0,15)
      \put(0,15){\vector(-1,1){0}}
    \put(12,5){\footnotesize $t$}\put(2,12){\footnotesize $-t$}
    \put(0,5){\footnotesize $1$}\put(2,-2){\footnotesize $-1$}
    \end{picture}
  \end{minipage}
. 
In his terms, 
our alternating region choice matrix  and the definite region choice matrix of the double counting rule are denoted by 
  \begin{minipage}{15\unitlength}
    \begin{picture}(15,15)
      \qbezier(0,0)(15,15)(15,15)
      \qbezier(15,0)(15,0)(10,5)
      \qbezier(5,10)(0,15)(0,15)
    \put(12,5){\footnotesize $1$}\put(2,12){\footnotesize $-1$}
    \put(0,5){\footnotesize $1$}\put(2,-2){\footnotesize $-1$}
    \end{picture}
  \end{minipage}
 and 
  \begin{minipage}{15\unitlength}
    \begin{picture}(15,15)
      \qbezier(0,0)(15,15)(15,15)
      \qbezier(15,0)(15,0)(10,5)
      \qbezier(5,10)(0,15)(0,15)
    \put(10,5){\footnotesize $1$}\put(5,12){\footnotesize $1$}
    \put(0,5){\footnotesize $1$}\put(5,-2){\footnotesize $1$}
    \end{picture}
  \end{minipage}
 respectively. 
In \cite{kawauchiEdu2014}, 
Kawauchi indicated that 
the transposed incidence matrix is same as 
the Alexander matrix substituted $1$ modulo $2$, 
and that the solvability of the original region choice problem on knot diagrams is induced 
by the fact the Alexander polynomial substituted $1$ becomes $1$ for any knot. 
This fact also implies that Theorem \ref{thm;harada;mat} (2).  
\end{remark}

We give more examples to compare 
definite and alternating region choice matrices 
of the single counting rule 
and of the double counting rule. 

\begin{example}
Let $D$ be the link diagram given as 
the split sum of the $l$ copies of the knot diagram with only one crossing 
such that one region touches all crossings twice. 
The diagram $D$ represents a trivial $l$-component link.  
Under certain orders of crossings and regions, 
we obtain 
\[
A_{d1}(D)=
\begin{pmatrix}
1        & 1 & 1 & 0 & 0 & 0 & \ldots & 0  \\
1        & 0 & 0 & 1 & 1 & 0 & \ldots & 0\\
\vdots &   &   &   &   & \ddots &   &   \\
1        & 0 & 0 & 0 & 0 & \ldots  & 1 & 1 
\end{pmatrix}
,
\]
\[
A_{d2}(D)=
\begin{pmatrix}
2        & 1 & 1 & 0 & 0 & 0 & \ldots & 0  \\
2        & 0 & 0 & 1 & 1 & 0 & \ldots & 0\\
\vdots &   &   &   &   & \ddots &   &   \\
2        & 0 & 0 & 0 & 0 & \ldots  & 1 & 1 
\end{pmatrix}
,
\]
and 
\[
A_{a1}(D)=
\begin{pmatrix}
-\varepsilon _1  & \varepsilon _1 & \varepsilon _1 & 0 & 0 & 0 & \ldots & 0  \\
-\varepsilon _2  & 0 & 0 & \varepsilon _2 & \varepsilon _2 & 0 & \ldots & 0\\
\vdots             &   &   &   &   & \ddots &   &   \\
-\varepsilon _l   & 0 & 0 & 0 & 0 & \ldots  & \varepsilon _l  & \varepsilon _l  
\end{pmatrix}
,
\]
\[
A_{a2}(D)=
\begin{pmatrix}
-2\varepsilon _1 & \varepsilon _1 & \varepsilon _1 & 0 & 0 & 0 & \ldots & 0  \\
-2\varepsilon _2 & 0 & 0 & \varepsilon _2 & \varepsilon _2 & 0 & \ldots & 0\\
\vdots             &   &   &   &   & \ddots &   &   \\
-2\varepsilon _l  & 0 & 0 & 0 & 0 & \ldots  & \varepsilon _l  & \varepsilon _l  
\end{pmatrix}
,
\]
where $\varepsilon _i=1$ if the $i$-th crossing is positive $\diaCrossP$, 
$\varepsilon _i=-1$ if it is negative $\diaCrossN$. 
Each of these matrices has $l$ rows and $2l+1$ columns. 
\end{example}

\begin{example}\label{ex;n1}
Let $D$ be the link diagram given as 
the split sum of the knot diagram with only one crossing and the $l-1$ copies of the trivial knot daigaram $\diaTrivial$. 
The diagram $D$ represents a trivial $l$-component link.  
On $D$ with  certain orders of regions, 
we obtain 
\[
A_{d1}(D)=
\begin{pmatrix}
1        & 1 & 1 & 0 & 0 & 0 & \ldots & 0  
\end{pmatrix}
,
\]
\[
A_{d2}(D)=
\begin{pmatrix}
2        & 1 & 1 & 0 & 0 & 0 & \ldots & 0 
\end{pmatrix}
,
\]
and 
\[
A_{a1}(D)=
\begin{pmatrix}
-\varepsilon & \varepsilon & \varepsilon & 0 & 0 & 0 & \ldots & 0  
\end{pmatrix}
,
\]
\[
A_{a2}(D)=
\begin{pmatrix}
-2\varepsilon & \varepsilon & \varepsilon & 0 & 0 & 0 & \ldots & 0  
\end{pmatrix}
,
\]
where $\varepsilon =1$ if the crossing is positive, otherwise $\varepsilon =-1$, 
and the number of $0$ appearing on each matrix is $l-1$. 
\end{example}

\section{Kernel solutions from Alexander numberings}\label{sect;kernel}


Let $D$ be a link diagram with $d$ connected components 
and $n$ crossings $x_1, \cdots , x_n$, $n\geq 1$. 
Let $R_1, \cdots , R_{n+d+1}$ be the regions of $D$. 
Let all crossings be equipped with $0$. 
Then the integral region choice problems induce $\mathbb{Z}$-homomorphisms. 
We denote 
by $\Phi _{di}(D) : \mathbb{Z}^{n+d+1}\rightarrow \mathbb{Z}^{n}$ 
and $\Phi _{ai}(D) : \mathbb{Z}^{n+d+1}\rightarrow \mathbb{Z}^{n}$
the induced homomorphisms with representation matrices $A_{di}(D)$ and $A_{ai}(D)$ respectively, 
$i=1,2$. 
We call a vector $\mathbf{u} \in \mathbb{Z}^{n+d+1}$ 
with $A_{d1} (D)\mathbf{u}=\mathbf{0}$ (resp. $A_{d2}(D)\mathbf{u}=\mathbf{0}$) 
a \emph{kernel solution} for the definite region choice matrix 
of the single (resp. double) counting rule, 
similarly to that defined to knot projections in \cite{aharasuzuki}.  
We call a vector $\mathbf{u} \in \mathbb{Z}^{n+d+1}$ 
with $A_{a1} (D)\mathbf{u}=\mathbf{0}$ (resp. $A_{a2}(D)\mathbf{u}=\mathbf{0}$) 
a \emph{kernel solution} for the alternating region choice matrix 
of the single (resp. double) counting rule, 
similarly to that defined to knot diagrams in \cite{haradaM}.  

\begin{lemma}\label{lem;AlexindKer}
On any link diagram with at least one crossing, 
an Alexander numbering for an arbitrary orientation 
gives a kernel solution for 
an alternating region choice matrix of the double counting rule. 
\end{lemma}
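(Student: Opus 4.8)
The plan is to reduce the global identity $A_{a2}(D)\mathbf{u}=\mathbf{0}$ to a purely local verification at a single crossing. Write $\mathbf{u}=(u_1,\dots,u_{n+d+1})\in\mathbb{Z}^{n+d+1}$ for the vector whose $j$-th entry is the Alexander index assigned to the region $R_j$ by the chosen orientation. The $i$-th coordinate of $A_{a2}(D)\mathbf{u}$ involves only the columns $j$ with $x_i\in\partial R_j$, that is, only the (at most four) regions meeting the crossing $x_i$. By the definition of $a^{(a2)}_{ij}$ summarised in Table 1, this coordinate is exactly the signed sum of the Alexander indices over the four corners at $x_i$, where a corner of type $\diaPosTouchL$ or $\diaPosTouchR$ is counted with sign $+1$ and a corner of type $\diaNegTouchT$ or $\diaNegTouchB$ with sign $-1$, and where a region occupying two opposite corners contributes its index twice (this is what the entries $\pm 2$ record). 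Thus it suffices to show that, at every crossing, the sum of the Alexander indices of the two $\diaPosTouchL$/$\diaPosTouchR$ corners equals the sum of the indices of the two $\diaNegTouchT$/$\diaNegTouchB$ corners.

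Next I would establish this local relation directly from the defining rule of an Alexander numbering, that the index on the left of an oriented arc exceeds the index on the right by one. Labelling the four local regions by their position around $x_i$ and traversing a small loop about the crossing, one crosses the four arc-germs in turn; since each strand is consistently oriented, the two germs of one strand are crossed with opposite sign, so the four successive index changes appear in the pattern $(+1,+1,-1,-1)$ up to rotation. Short bookkeeping then shows that one pair of opposite corners has equal indices while the other pair differs by $2$; in either case the sum over the two $+1$-corners equals the sum over the two $-1$-corners, which is precisely the required identity. Equivalently, one may invoke the Remark identifying $A_{a2}(D)$ with the Alexander matrix of \cite{alexander} specialised at $t=1$ and read the relation off Kauffman's labelled crossing in \cite{kauffman2006}; I would present the elementary loop computation, as it makes the dependence on the fixed orientation transparent (the same orientation defines both $A_{a2}(D)$ and the numbering).

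The one point needing care, and the main obstacle, is the reducible crossing, where a single region touches $x_i$ twice at two opposite corners. I must check that the doubled region sits at the pair of \emph{equal} Alexander index rather than at the pair differing by $2$, since otherwise one region would be forced to carry two distinct indices. This is automatic: the Alexander numbering assigns one integer to each region, so the two coincident corners necessarily receive the same value, which by the local pattern above is possible only for the equal-index pair, namely $\diaPosTouchLR$ with the two indices equal, or $\diaNegTouchTB$ with the two indices equal. The factor $\pm 2$ in the definition of $a^{(a2)}_{ij}$ then reproduces exactly the two equal corner contributions, so the signed corner sum still telescopes to $0$. Summing these vanishing coordinates over all crossings $x_1,\dots,x_n$ yields $A_{a2}(D)\mathbf{u}=\mathbf{0}$, so the Alexander numbering is a kernel solution for the alternating region choice matrix of the double counting rule.
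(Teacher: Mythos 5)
Your proof is correct and follows essentially the same route as the paper: both reduce the kernel identity $A_{a2}(D)\mathbf{u}=\mathbf{0}$ to a local check at each crossing, where the Alexander indices at the four corners are $p$, $p+1$, $p+2$, $p+1$ so that the signed corner sum vanishes. Your explicit verification that a reducible crossing's doubled region must occupy the equal-index pair of opposite corners is a point the paper leaves implicit, but it does not change the argument.
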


\begin{proof}
On  the given oriented link diagram $D$, 
we fix an Alexander numbering for it. 
We take an arbitrary crossing $x$ of $D$.  
We may assume that 
$x$ lies as $\diaCrossP$ or $\diaCrossN$ in $D$. 
We suppose that the index of the right region of $x$ is $p\in \mathbb{Z}$. 
Then the index of the left region of $x$ is $p+2$ 
and the rest regions touching $x$ are $p+1$.  
We have $p-(p+1)+(p+2)-(p-1)=0$ and $-p+(p+1)-(p+2)+(p-1)=0$. 
Then the  alternating region choice obtained from the Alexander numbering 
does not change the scores of the crossings. 
\end{proof}

Let $D$ be an oriented link diagram with ordered link components, 
and $D_i$ be a sub-diagram of $D$ representing $i$-th link component, $i=1, \cdots , l$. 
We fix a sub-diagram $D_i$. 
We ignore the diagrams of  link components other $D_i$, 
and take an Alexander numbering. 
Each region $R$ of the diagram $D$ is a subset of one region $S$ of the diagram $D_i$.  
Let $a_S$ be the integer assigned to $S$ by this Alexander numbering. 
We assign the integer $a_S$ to the region $R$ and 
denote it by $u_R$. 
We call this assignment of the integers to the region $\{ u_R\} _R$ 
a \emph{componentwise Alexander numbering associated with} $D_i$.  
Figure \ref{Fig;compAlex} gives 
an example of a pair of componentwise Alexander numberings  
on a 2-component link diagram. 

\begin{figure}[htbp]
\begin{center}
\includegraphics[height=3.5cm,clip]{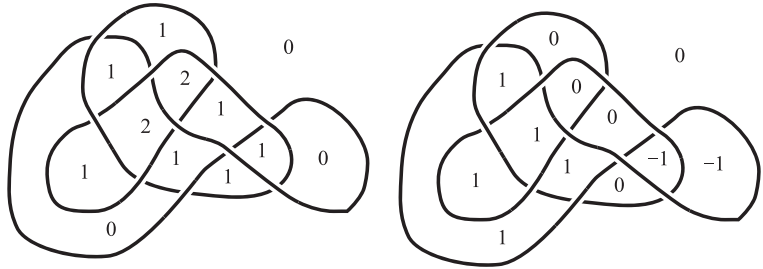}
\end{center}
\caption{Componentwise Alexander numberings.}
\label{Fig;compAlex}
\end{figure}

\begin{lemma}\label{lem;compAlex}
On any oriented link diagram with at least one crossing, 
each componentwise Alexander numbering gives a kernel solution for 
an alternating region choice matrix of the double counting rule. 
\end{lemma}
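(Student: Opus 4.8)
The plan is to verify, crossing by crossing, that the vector $\mathbf{u}=\{u_R\}_R$ produced by the componentwise Alexander numbering associated with $D_i$ lies in the kernel of $A_{a2}(D)$. The first thing I would record is that, by the very definition of the double counting rule, the entry of $A_{a2}(D)$ in the row of a crossing $x$ and the column of a region $R$ is the sum of the corner contributions of $R$ at $x$; consequently the $x$-th coordinate of $A_{a2}(D)\mathbf{u}$ equals $\sum_c \varepsilon_c\, u_{R(c)}$, where $c$ runs over the at most four corners at $x$, $R(c)$ is the region of $D$ occupying the corner $c$, and $\varepsilon_c\in\{+1,-1\}$ is $+1$ for the two corners contained by the underpass and the overpass counterclockwise and $-1$ for the other two. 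Reading the corner signs off Table~1 (equivalently, off the Kauffman labeling at $t=1$ recalled in the Remark), the four corners at $x$ in cyclic order carry signs $+1,-1,+1,-1$; in particular cyclically adjacent corners carry opposite signs, and each cyclically adjacent pair of corners is separated by exactly one of the two strands meeting at $x$. This per-corner reformulation is what makes the value $u_{R(c)}=a_S$, which depends only on the region $S$ of $D_i$ containing the corner, tractable.

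I would then classify the crossings $x$ of $D$ by how many of the two strands through $x$ belong to the distinguished component $D_i$. If both strands belong to $D_i$, then $x$ is also a crossing of $D_i$, the four corners at $x$ lie in the four distinct regions of $D_i$ meeting $x$, and the values $u_{R(c)}$ are exactly the Alexander indices of $D_i$ around $x$, namely $p,p+1,p+1,p+2$. The alternating sum $\sum_c\varepsilon_c u_{R(c)}$ is then the identity already computed in the proof of Lemma~\ref{lem;AlexindKer} (the matrix coefficients at $x$ depend only on the over/under data, which agree in $D$ and $D_i$), so it vanishes.

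If neither strand belongs to $D_i$, I would take a disk around $x$ small enough to meet $D$ only in the two strands crossing at $x$; since $x\notin D_i$ this disk is disjoint from $D_i$ and hence contained in a single region $S$ of $D_i$, so all four corners satisfy $u_{R(c)}=a_S$ and $\sum_c\varepsilon_c u_{R(c)}=a_S(1-1+1-1)=0$. If exactly one strand belongs to $D_i$, that $D_i$-strand cuts the small disk into two half-disks while the other strand disappears in $D_i$; the two corners on each side of the $D_i$-strand therefore lie in one and the same region of $D_i$ and share a common value. As these two corners are cyclically adjacent they carry opposite signs $+1$ and $-1$, so each side contributes $0$ and the whole sum vanishes.

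Since every coordinate of $A_{a2}(D)\mathbf{u}$ is zero, $\mathbf{u}$ is a kernel solution, as claimed. The only genuinely delicate point is the planar bookkeeping in the last two cases: one must align the cyclic sign pattern $+1,-1,+1,-1$ of the row with the partition of the four corners into the regions of $D_i$ seen by the componentwise numbering, and check that in each case the separating strand is arranged so that equal values meet opposite signs. The self-crossing case carries no new content, as it is inherited from Lemma~\ref{lem;AlexindKer}.
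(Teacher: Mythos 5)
Your proof is correct and follows essentially the same route as the paper: the same three-way classification of crossings by how many of the two strands belong to $D_i$, the same reduction of the self-crossing case to Lemma~\ref{lem;AlexindKer}, and the same cancellation of equal values against opposite corner signs in the mixed and disjoint cases. Your explicit per-corner reformulation of the row of $A_{a2}(D)$ is a slightly more careful statement of what the paper uses implicitly, but the argument is the same.
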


\begin{proof}
Let $D$ be an oriented link diagram with at least one crossing and ordered link components, 
and $D_i$ be a sub-diagram of $D$ representing $i$-th link component, $i=1, \cdots , l$. 
We fix a sub-diagram $D_i$. 
We take a componentwise Alexander numbering associated with $D_i$. 
Let $q$ be a crossing of $D$ 
and we denote the four corners touching $q$ by $C_q^1, C_q^2, C_q^3, C_q^4$ clockwise, 
and the regions on $D$ including $C_q^j$ by $R_q^j$, $j=1,2,3,4$. 
If $q$ is  a crossing of $D_i$, 
the regions $R_q^1, R_q^2, R_q^3, R_q^4$ are assigned integers $r_1, r_2, r_3, r_4$   
with $r_1-r_2+r_3-r_4=0$ by Lemma \ref{lem;AlexindKer}. 
If $q$ is a crossing of an arc of $D_i$ and an arc of other components, 
we may assume that $R_q^1$ and $R_q^2$ are subsets of a region $S$ of the diagram $D_i$, 
and that $R_q^3$ and $R_q^4$ are subsets of a region $S'$ of the diagram $D_i$. 
Then the regions $R_q^1, R_q^2, R_q^3, R_q^4$ are assigned integers $r_1, r_1, r_3, r_3$   
with $r_1-r_3=\pm 1$, 
and we have $r_1-r_1+r_3-r_3=0$.  
If $q$ is a crossing not included in $D_i$, 
the regions $R_q^1, R_q^2, R_q^3, R_q^4$ are subsets of a region of the diagram $D_i$. 
Then they are assigned same integer $r_1$,  and we have $r_1-r_1+r_1-r_1=0$. 

Therefore the componentwise Alexander numbering associated with $D_i$ becomes 
a kernel solution for the alternating integral region choice problem of double counting rule. 
\end{proof}

We can obtain kernel solutions for the definite region choice matrix 
from kernel solutions for the alternating region choice matrix and a fixed checkerboard coloring. 

\begin{lemma}\label{lem;KerAtoD}
For a given link diagram, 
we fix a checkerboard coloring. 
We take a kernel solution for 
an alternating region choice matrix of the double counting rule. 
For each region $R$, let $c_R$ and $u_R$ be the integers assigned by 
the checkerboard coloring and the kernel solution respectively. 
Assigning the integer 
$\displaystyle (-1)^{c_R}u_R$ to each region $R$, 
we obtain a kernel solution for 
a definite region choice matrix of the double counting rule. 
\end{lemma}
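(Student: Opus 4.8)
The plan is to turn the statement into a single entrywise comparison of the two double-counting matrices, with the conversion signs governed by an Alexander numbering lifting the fixed checkerboard coloring. First I would choose an orientation of the diagram and, by Section~\ref{sect;pre}, an Alexander numbering $\{a_R\}_R$ whose reduction modulo $2$ is the given coloring, so that $c_R\equiv a_R\pmod 2$ for every region $R$. The lemma then reduces to the identity
\[ (-1)^{c_{R_j}}\, a^{(d2)}_{ij} \;=\; (-1)^{\pi_i}\, a^{(a2)}_{ij} \qquad(\text{for all }i,j), \]
where $\pi_i\in\{0,1\}$ is a parity attached to the crossing $x_i$ and defined in the next step; granting it, the conclusion drops out immediately.

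The core is a local computation at one crossing, for which I would lean on Lemma~\ref{lem;AlexindKer}. That lemma shows that the four corners around $x_i$ receive, read cyclically, Alexander indices $p,p+1,p+2,p+1$, so that the two corners on each diagonal of the crossing carry a single parity and the two diagonals carry opposite parities. Moreover the two corners scored $+1$ by $A_{a1}(D)$ (namely $\diaPosTouchL$, $\diaPosTouchR$) form one diagonal and the two scored $-1$ (namely $\diaNegTouchT$, $\diaNegTouchB$) form the other. I would therefore define $\pi_i$ to be the common parity of the positive corners; then $c_{R_j}=\pi_i$ whenever $R_j$ meets $x_i$ at a positive corner and $c_{R_j}=\pi_i+1$ whenever it meets at a negative corner, using $c_R\equiv a_R$. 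Granting this, the identity is checked column by column against Table~1: in each of the five cases $|a^{(d2)}_{ij}|=|a^{(a2)}_{ij}|$, and the factor $(-1)^{c_{R_j}}$ supplies exactly the sign by which $a^{(a2)}_{ij}$ differs from $a^{(d2)}_{ij}$, up to the row-constant $(-1)^{\pi_i}$.

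With the identity in hand the lemma follows by a one-line computation. Writing $v_R=(-1)^{c_R}u_R$, for each crossing $x_i$ I would compute
\[ \sum_j a^{(d2)}_{ij}\,v_{R_j} \;=\; \sum_j (-1)^{c_{R_j}}a^{(d2)}_{ij}\,u_{R_j} \;=\; (-1)^{\pi_i}\sum_j a^{(a2)}_{ij}\,u_{R_j} \;=\; 0, \]
the last equality holding because $\{u_R\}_R$ is a kernel solution for $A_{a2}(D)$. Hence $\{v_R\}_R$ lies in the kernel of $A_{d2}(D)$, which is the assertion.

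I expect the genuine difficulty to lie in the double-counting entries rather than the generic ones. When a region $R_j$ meets $x_i$ twice, giving an entry of modulus $2$, I must know that its two corners form one diagonal of the crossing, so that they contribute equal $A_{a1}(D)$-signs---both $+$, yielding $\diaPosTouchLR$, or both $-$, yielding $\diaNegTouchTB$---and a single well-defined color $c_{R_j}$ equal to that diagonal's parity, in agreement with the corresponding columns of Table~1. This is exactly where the cyclic pattern of Lemma~\ref{lem;AlexindKer} is used: corners on a common diagonal always receive the same $A_{a1}(D)$-sign and the same parity, whereas adjacent corners differ, so a twice-counted region can never land in a mixed configuration. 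Confirming this consistency, and that $\pi_i$ is well defined, is the one step demanding care; everything else is bookkeeping from Table~1 followed by the displayed computation.
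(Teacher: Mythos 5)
Your proposal is correct and is essentially the paper's own argument: the key point in both is that around each crossing the checkerboard colors alternate in step with the signs of the alternating rule, so multiplying by $(-1)^{c_R}$ converts the definite sum at that crossing into $\pm$ the alternating sum, which vanishes. Your packaging of this as the entrywise identity $(-1)^{c_{R_j}}a^{(d2)}_{ij}=(-1)^{\pi_i}a^{(a2)}_{ij}$, with the parity claim routed through Lemma~\ref{lem;AlexindKer} and an Alexander numbering lifting the coloring, is just a more explicit version of the paper's direct four-corner computation (which simply normalizes $c_{R_1}=0$ and reads off $c_{R_2}=1$, $c_{R_3}=0$, $c_{R_4}=1$).
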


\begin{proof}
For a crossing $x$ of the diagram, 
We denote the four corners touching $x$ by $C_1, C_2, C_3, C_4$ clockwise, 
and the regions including $C_j$ by $R_j$, $j=1,2,3,4$. 
Then we have $\pm (u_{R_1}-u_{R_2}+u_{R_3}-u_{R_4})=0$. 
We may assume $c_{R_1}=0$. 
Then the equalities $c_{R_2}=1, c_{R_3}=0, c_{R_4}=1$ hold. 
Hence we have 
\begin{eqnarray*}
& & (-1)^{c_{R_1}}u_{R_1} +(-1)^{c_{R_2}}u_{R_2} +(-1)^{c_{R_3}}u_{R_3} +(-1)^{c_{R_4}}u_{R_4} \\
&=& u_{R_1}-u_{R_2}+u_{R_3}-u_{R_4} \\
&=& 0. 
\end{eqnarray*}
\end{proof}

\begin{lemma}\label{lem;KerAtoDsingle}
For a given link diagram, 
we fix a checkerboard coloring. 
We take a kernel solution for 
an alternating region choice matrix of the single counting rule. 
For each region $R$, let $c_R$ and $u_R$ be the integers assigned by 
the checkerboard coloring and the kernel solution respectively. 
Assigning the integer 
$\displaystyle (-1)^{c_R}u_R$ to each region $R$, 
we obtain a kernel solution for 
a definite region choice matrix of the single counting rule. 
\end{lemma}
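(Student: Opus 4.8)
The plan is to check, one crossing at a time, that the assignment $\{(-1)^{c_R}u_R\}_R$ satisfies every row of the definite single-rule matrix $A_{d1}(D)$, following the proof of Lemma~\ref{lem;KerAtoD} but compensating for the fact that the single counting rule records a region touching a crossing twice only once. Fix a crossing $x$, label its four corners $C_1,C_2,C_3,C_4$ clockwise, and let $R_j$ be the region containing $C_j$. By the discussion in Section~\ref{sect;pre}, $x$ is touched either by four distinct regions (the irreducible case) or by exactly three, one of which occupies a pair of opposite corners (the reducible case); I would treat these separately.

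In the irreducible case the four regions $R_1,R_2,R_3,R_4$ are distinct, every entry $a^{(d1)}$ at $x$ equals $1$, and the single and double counting rules coincide there, so the computation is literally that of Lemma~\ref{lem;KerAtoD}. The checkerboard coloring forces $c_{R_1}=c_{R_3}$ and $c_{R_2}=c_{R_4}$ with opposite parities, whence the weighted definite sum at $x$ equals $\pm(u_{R_1}-u_{R_2}+u_{R_3}-u_{R_4})$; this is exactly the alternating single-rule relation at $x$ and therefore vanishes.

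The reducible case is where the single rule genuinely departs from the double rule and is the point needing care. Here one region $R$ occupies two opposite corners while two further regions $S,T$ occupy the remaining corners. By Table~1 the entry of $R$ in the alternating single-rule matrix is $\pm1$ rather than $\pm2$, and its definite single-rule entry is $1$; consequently the alternating single-rule kernel relation at $x$ is $\pm(u_R-u_S-u_T)=0$, that is $u_R=u_S+u_T$. Since the two corners of $R$ share one checkerboard color while $S$ and $T$ share the opposite color, inserting the signs $(-1)^{c_R},(-1)^{c_S},(-1)^{c_T}$ turns the $a^{(d1)}$-weighted definite sum over the three distinct regions into $(-1)^{c_R}(u_R-u_S-u_T)$, which is $0$ by the relation just obtained.

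The single genuinely new ingredient, and the step I expect to be the main obstacle, is this reducible case: in the double-rule proof the four-corner alternating sum $u_{R_1}-u_{R_2}+u_{R_3}-u_{R_4}$ could be used uniformly, because summing over corners automatically double-counts a repeated region, whereas under the single rule one must instead invoke the three-term relation above and match the clockwise/counterclockwise sign convention of Table~1 against the checkerboard parities. This is only a short finite check rather than a real difficulty. Assembling the resulting identity at each crossing shows that $A_{d1}(D)$ annihilates $\{(-1)^{c_R}u_R\}_R$, giving the desired kernel solution.
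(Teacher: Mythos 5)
Your proposal is correct and follows essentially the same route as the paper's proof: a crossing-by-crossing check, split into the irreducible case (where the computation reduces to that of Lemma \ref{lem;KerAtoD}) and the reducible case, where the three-term alternating single-rule relation $\pm(u_R-u_S-u_T)=0$ is converted into the definite relation via the checkerboard parities of the opposite corners. The only difference is notational (your $R,S,T$ versus the paper's sub-cases $R_1=R_3$ or $R_2=R_4$).
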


\begin{proof}
For a crossing $x$ of the diagram, 
We denote the four corners touching $x$ by $C_1, C_2, C_3, C_4$ clockwise, 
and the regions including $C_j$ by $R_j$, $j=1,2,3,4$. 
We may assume $c_{R_1}=0$. 
Then the equalities $c_{R_2}=1, c_{R_3}=0, c_{R_4}=1$ hold. 

If $x$ is not reducible, 
then $R_j$'s are different each other and we have $\pm (u_{R_1}-u_{R_2}+u_{R_3}-u_{R_4})=0$. 
Hence we have 
\begin{eqnarray*}
& & (-1)^{c_{R_1}}u_{R_1} +(-1)^{c_{R_2}}u_{R_2} +(-1)^{c_{R_3}}u_{R_3} +(-1)^{c_{R_4}}u_{R_4} \\
&=& u_{R_1}-u_{R_2}+u_{R_3}-u_{R_4} \\
&=& 0. 
\end{eqnarray*}

We suppose that $x$ is reducible. 
Then there exists just one pair of $R_j$'s coinciding each other. 
If $R_1$ coincides with $R_3$, the equality $\pm (u_{R_1}-u_{R_2}-u_{R_4})=0$ holds. 
Hence we have 
\[ 
(-1)^{c_{R_1}}u_{R_1} +(-1)^{c_{R_2}}u_{R_2} +(-1)^{c_{R_4}}u_{R_4}= u_{R_1}-u_{R_2}-u_{R_4}=0. 
\] 
Otherwise, $R_2$ coincides with $R_4$ and  we have $\pm (u_{R_1}-u_{R_2}+u_{R_3})=0$. 
Hence we have 
\[
(-1)^{c_{R_1}}u_{R_1} +(-1)^{c_{R_2}}u_{R_2} +(-1)^{c_{R_3}}u_{R_3} =u_{R_1}-u_{R_2}+u_{R_3}=0. 
\]
\end{proof}

Similarly, 
we can obtain kernel solutions for the alternating region choice matrix 
from kernel solutions for the definite region choice matrix and a fixed checkerboard coloring. 

\begin{lemma}\label{lem;KerDtoA}
For a given link diagram, 
we fix a checkerboard coloring. 
We take a kernel solution for 
a definite region choice matrix of the double (resp. single) counting rule. 
For each region $R$, let $c_R$ and $u_R$ be the integers assigned by 
the checkerboard coloring and the kernel solution respectively. 
Assigning the integer 
$\displaystyle (-1)^{c_R}u_R$ to each region $R$, 
we obtain a kernel solution for 
an alternating region choice matrix of the double (resp. single) counting rule. 
\hfill $\square$
\end{lemma}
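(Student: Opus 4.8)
The plan is to realise the passage between the two kinds of kernel solutions as a single involution and to reuse the crossing-by-crossing computations already carried out for Lemmas \ref{lem;KerAtoD} and \ref{lem;KerAtoDsingle}. Fix the checkerboard coloring $\{c_R\}_R$ and let $\sigma\colon\mathbb{Z}^{n+d+1}\to\mathbb{Z}^{n+d+1}$ be the diagonal sign change sending an assignment $\{w_R\}_R$ to $\{(-1)^{c_R}w_R\}_R$. Since each $c_R$ enters only through its parity, $\sigma$ satisfies $\sigma\circ\sigma=\mathrm{id}$. The key point I would isolate is that the proofs of Lemmas \ref{lem;KerAtoD} and \ref{lem;KerAtoDsingle} in fact establish, for an \emph{arbitrary} assignment $\{w_R\}_R$ and at \emph{every} crossing, the identity
\begin{equation*}
\bigl(A_{d2}(D)\,\sigma(\mathbf{w})\bigr)_x \;=\; \bigl(A_{a2}(D)\,\mathbf{w}\bigr)_x
\qquad\text{(resp. with }A_{d1},A_{a1}\text{)},
\end{equation*}
because the equalities $(-1)^{c_{R_1}}u_{R_1}+\cdots=u_{R_1}-u_{R_2}+u_{R_3}-u_{R_4}$ appearing there use only the pattern $c_{R_1}=0,\ c_{R_2}=1,\ c_{R_3}=0,\ c_{R_4}=1$ around the crossing, never the kernel hypothesis; the kernel hypothesis is invoked there only to set the right-hand side equal to $0$.

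Granting this, the lemma follows at once. Let $\mathbf{u}$ be a kernel solution for $A_{d2}(D)$ (resp. $A_{d1}(D)$), so that $A_{d2}(D)\mathbf{u}=\mathbf{0}$, and apply the displayed identity to the assignment $\mathbf{w}=\sigma(\mathbf{u})$:
\begin{equation*}
A_{a2}(D)\,\sigma(\mathbf{u}) \;=\; A_{d2}(D)\,\sigma\bigl(\sigma(\mathbf{u})\bigr) \;=\; A_{d2}(D)\,\mathbf{u} \;=\; \mathbf{0},
\end{equation*}
where the middle equality is $\sigma\circ\sigma=\mathrm{id}$. Thus $\sigma(\mathbf{u})=\{(-1)^{c_R}u_R\}_R$ is a kernel solution for the alternating region choice matrix of the double (resp. single) counting rule, as claimed. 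Both parenthetical cases are handled at once, since the identity is available for each counting rule.

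The only step requiring genuine care is the claim that the identity holds as an identity in the free variables $w_R$ and uniformly over all crossings. For the double counting rule and for an irreducible crossing it is immediate from the alternating sign pattern $+,-,+,-$ read clockwise. For the single counting rule one must additionally treat the two reducible configurations separately: when $R_1=R_3$ the alternating row contributes $\pm(u_{R_1}-u_{R_2}-u_{R_4})$ and when $R_2=R_4$ it contributes $\pm(u_{R_1}-u_{R_2}+u_{R_3})$, and in each case this must be matched against the correspondingly reduced definite row after the substitution $\sigma$; these are exactly the computations of the proof of Lemma \ref{lem;KerAtoDsingle}, now read as identities rather than as consequences of a kernel hypothesis. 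Should one prefer not to appeal to the earlier proofs at this level of granularity, the statement can instead be proved directly by repeating the crossing-by-crossing analysis of Lemmas \ref{lem;KerAtoD} and \ref{lem;KerAtoDsingle} verbatim with the definite and alternating matrices interchanged, which is routine.
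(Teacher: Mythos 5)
Your proof is correct and is essentially the argument the paper intends: the paper omits the proof of this lemma as ``similar'' to Lemmas \ref{lem;KerAtoD} and \ref{lem;KerAtoDsingle}, and packaging the reversal through the involution $\sigma$ with $\sigma\circ\sigma=\mathrm{id}$ is a clean way to make that precise. One small correction: the displayed crossing-wise identity holds only up to a sign depending on the crossing and on which of the two checkerboard colors occupies the corner you call $R_1$ there, i.e. $A_{d2}(D)\,\sigma = E\,A_{a2}(D)$ for a diagonal matrix $E$ with entries $\pm 1$ indexed by crossings (and likewise for the single counting rule); since $E$ is invertible this changes nothing about kernels, but as literally written the equality $\bigl(A_{d2}(D)\sigma(\mathbf{w})\bigr)_x=\bigl(A_{a2}(D)\mathbf{w}\bigr)_x$ can fail by a sign, just as the paper's own computations carry a $\pm$ in front of $u_{R_1}-u_{R_2}+u_{R_3}-u_{R_4}$.
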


\section{Solutions of the alternating integral region choice problem on knot diagrams}\label{sect;AZRCPpf}


In this section, we give an alternative proof of Theorem \ref{thm;harada;mat}.

First, we observe the alternating integral region choice problem of the double counting rule.

\begin{lemma}
\label{lem;arcfixAZRCker}
Let $D$ be a link diagram with 
$n$ crossings, $n\geq 1$. 
We fix an arc $\gamma$ in the link diagram $D$,
and let $R$ and $R'$ be two regions which are the both sides of the arc $\gamma$. 
Then there exists a kernel solution $\mathbf{u}$ for $A_{a2}(D)$ 
such that the components of $\mathbf{u}$ corresponding to $R$ and $R'$ are $0$ and $1$ respectively.  
\end{lemma}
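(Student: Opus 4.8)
The plan is to build the required kernel solution directly out of an Alexander numbering, relying on Lemma \ref{lem;AlexindKer}, which already guarantees that any Alexander numbering is a kernel solution for $A_{a2}(D)$. All that remains is to arrange that the two prescribed regions receive exactly the values $0$ and $1$, and this is precisely the sort of normalization an Alexander numbering permits, since it is governed only by the index differences across arcs.

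First I would fix an orientation of $D$. The arc $\gamma$ lies in a single link component, and by choosing the orientation of that component suitably I can ensure that $R'$ lies on the \emph{left} of $\gamma$ while $R$ lies on the right; the orientations of the remaining components may be chosen arbitrarily. Because $\gamma$ carries no crossing, $R$ and $R'$ are exactly the two regions meeting along $\gamma$ (this uses the Jordan-curve remark of Section \ref{sect;pre}), so the defining property of an Alexander numbering — that the left region along an oriented arc has index one larger than the right — forces the indices of $R$ and $R'$ to differ by exactly $1$, with that of $R'$ the larger.

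Next I would normalize by a global shift. An Alexander numbering is constrained only by differences of indices across arcs, so subtracting a fixed integer from every region again yields an Alexander numbering; equivalently, the all-ones vector lies in the kernel of $A_{a2}(D)$ because each row of $A_{a2}(D)$ sums to $0$ (the four entries at an ordinary crossing are $+1,-1,+1,-1$, and the three nonzero entries at a reducible crossing are $\pm 2,\mp 1,\mp 1$). Subtracting the index of $R$ from every region therefore produces an Alexander numbering, hence by Lemma \ref{lem;AlexindKer} a kernel solution $\mathbf{u}$ for $A_{a2}(D)$, whose component at $R$ is $0$ and whose component at $R'$ is $1$, as required.

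The argument is short and presents no serious obstacle; the only points needing care are the bookkeeping of the left/right convention — which is why I fix the orientation of the component containing $\gamma$ rather than leaving it arbitrary, so as to obtain $+1$ rather than $-1$ — and the observation that the additive freedom in Alexander numberings is genuine, i.e. that shifting by a constant preserves the kernel property. If one prefers to bypass the orientation bookkeeping, one may instead start from an arbitrary Alexander numbering $\mathbf{a}$, note $a_{R'}-a_R=\pm 1$, and set $\mathbf{u}=\mathbf{a}-a_R\mathbf{1}$ or $\mathbf{u}=a_R\mathbf{1}-\mathbf{a}$ according to the sign; both are kernel solutions, since the kernel of $A_{a2}(D)$ is closed under negation and under adding multiples of the all-ones vector.
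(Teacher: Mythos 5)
Your proof is correct and follows essentially the same route as the paper: invoke Lemma \ref{lem;AlexindKer} to get a kernel solution from an Alexander numbering, normalize so that $R$ receives $0$ (the paper leaves the shift implicit; you justify it via the all-ones vector lying in the kernel, which the paper records separately as Lemma \ref{lem;constKer}), and fix the sign at $R'$ either by reorienting the component containing $\gamma$ or by negating, the latter being exactly the paper's final step.
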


\begin{proof}
By Lemma \ref{lem;AlexindKer} or \ref{lem;compAlex}, 
there exists a kernel solution $\mathbf{u}$ for $A_{a2}(D)$ 
such that the components of $\mathbf{u}$ corresponding to $R$ and $R'$ are $0$ and $\pm1$ respectively.  
If $R'$ is assigned $-1$, we multiply all components of $\mathbf{u}$ by $-1$. 
\end{proof}

Figure \ref{Fig;arcfixAZRCker} gives an example of 
a link diagram with a kernel solution for an alternating region choice matrix 
such that two regions adjacent to the arc $\gamma$ are assigned $0$ and $1$. 
This kernel solution is obtained from an Alexander numbering. 

\begin{figure}[htbp]
\begin{center}
\includegraphics[height=3.5cm,clip]{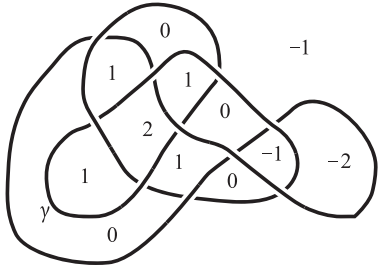}
\end{center}
\caption{A kernel solution for an alternating region choice matrix.}
\label{Fig;arcfixAZRCker}
\end{figure}

\begin{remark}\label{rem;arcfixAZRCker}
In \cite{haradaM}, 
Harada proved Lemma \ref{lem;arcfixAZRCker} for a knot diagram, 
showing that Reidemeister moves and crossing changes preserve 
the existence of the kernel solution, and that 
the knot diagram with only one crossing has a kernel solution. 
His argument is  
similar to 
that due to Ahara and Suzuki \cite{aharasuzuki} for Lemma \ref{lem;arcfixDZRCker}. 
\end{remark}

The following theorem also has been proved by Harada \cite{haradaM} 
for a knot diagram using Lemma \ref{lem;arcfixAZRCker}. 
We give a proof using Lemma \ref{lem;compAlex} instead.

\begin{theorem}
\label{thm;add1AZRC}
Let $D$ be a link diagram with $d$ connected components and $n$ crossings, $n\geq 1$. 
We take a crossing $x$ of $D$ of arcs in same link component. 
There exist $\mathbf{v}_x \in \mathbb{Z}^{n+d+1}$ such that 
any components of $A_{a2}(D)\mathbf{v}_x$ are $0$ 
but the component of $A_{a2}(D)\mathbf{v}_x$ to $x$ is $1$. 
\end{theorem}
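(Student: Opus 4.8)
The plan is to realize the standard basis vector with a single $1$ in the coordinate of $x$ inside the image of $A_{a2}(D)$, by splicing at $x$ and pulling back a componentwise Alexander numbering of the spliced diagram. Because the two arcs meeting at $x$ belong to the same link component $K$, the oriented splicing $\diaSmooth$ at $x$ separates $K$ into two components $K'$ and $K''$, and the two arcs created by the splicing lie in different ones of them; this is exactly where the hypothesis on $x$ is used. Thus the spliced diagram $D_x$ has $n-1$ crossings and one more link component than $D$. The splicing only merges the two opposite regions of $D$ meeting $x$ along the channel opened by $\diaSmooth$ and leaves all other regions unchanged, so each region $R$ of $D$ lies in a unique region $\widetilde R$ of $D_x$. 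I can therefore pull back any region assignment on $D_x$ to an assignment $\mathbf{v}_x$ on the regions of $D$, giving $R$ the value of $\widetilde R$.

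First I would take the componentwise Alexander numbering of $D_x$ associated with $K'$, which by Lemma \ref{lem;compAlex} is a kernel solution for $A_{a2}(D_x)$, so the score it produces at every crossing of $D_x$ vanishes. Let $\mathbf{v}_x$ be its pullback to the regions of $D$. For each crossing $y \neq x$, the four corners at $y$ are untouched by the splicing, so the regions of $D$ at those corners lie inside the regions of $D_x$ at the corresponding corners of $y$, while the coefficients in the $y$-th row of $A_{a2}$ depend only on the unchanged local data at $y$. Consequently the score of $\mathbf{v}_x$ at $y$ equals the score of the componentwise numbering at $y$ in $D_x$, namely $0$.

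It remains to evaluate the score at $x$. By Lemma \ref{lem;AlexindKer} the four corners of $x$ carry signs $+,-,+,-$ in clockwise order, so the score at $x$ is the alternating sum of the four corner values of $\mathbf{v}_x$. Now, once all components of $D_x$ other than $K'$ are ignored, only one of the two arcs produced by $\diaSmooth$ survives near $x$; it separates a single corner from the one region of the $K'$-diagram that contains the remaining three corners. Hence three of the corner values coincide and the fourth differs from them by exactly $\pm 1$, since the two sides of an arc always receive Alexander indices differing by $1$. The alternating sum therefore collapses to $\pm 1$, and after multiplying $\mathbf{v}_x$ by $-1$ if necessary we obtain the required vector. (If $n = 1$ then $D_x$ has no crossing and Lemma \ref{lem;compAlex} does not literally apply, but the Alexander numbering of the single circle $K'$ still works, there being no crossing other than $x$ to check.)

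The step I expect to be the main obstacle is justifying the two structural claims about the splicing on which everything rests: that splicing the self-crossing $x$ really splits $K$ into two components whose local arcs are distinct, and that in the diagram of $K'$ alone exactly three of the four corners at $x$ merge into one region while the fourth is cut off by the surviving arc. The first is a short orientation-tracing argument through $\diaSmooth$; the second follows because deleting the arc belonging to $K''$ opens the channel joining three of the corners, while the surviving $K'$-arc keeps the fourth on the opposite side, and an Alexander numbering can never assign the same index to the two sides of an arc.
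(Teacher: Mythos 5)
Your proof is correct and follows essentially the same route as the paper's: splice at the self-crossing $x$ (using the hypothesis to split the component in two), take the componentwise Alexander numbering associated with one of the resulting components via Lemma \ref{lem;compAlex}, pull it back through the unsplicing, and observe that the score vanishes at every crossing except $x$, where the alternating corner sum collapses to $\pm 1$. The only cosmetic difference is that the paper normalizes the numbering on the two sides of $\gamma_1$ in advance and corrects the sign by the sign $\varepsilon$ of $x$, whereas you compute the residual $\pm 1$ directly and flip if necessary; you also explicitly cover the $n=1$ case that the paper passes over.
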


\begin{figure}[htbp]
\begin{center}
\includegraphics[height=8cm,clip]{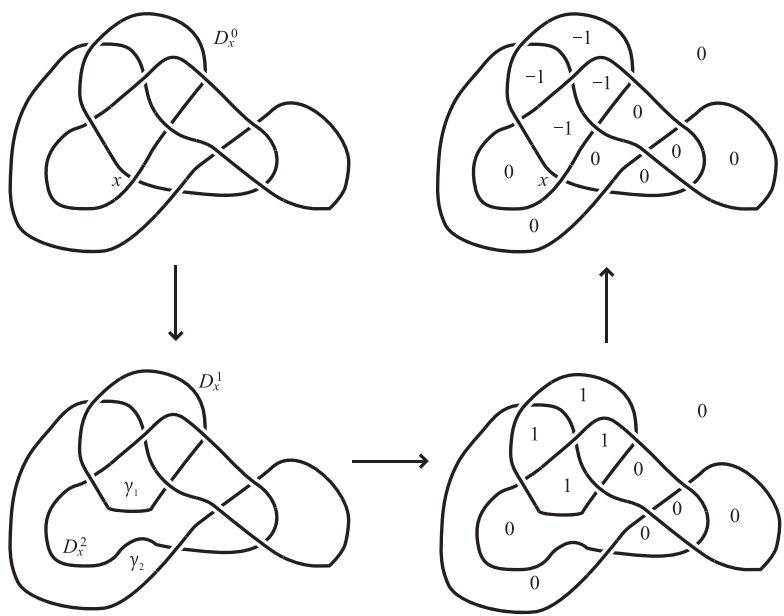}
\end{center}
\caption{Finding 
$\mathbf{v}_x 
$ such that 
any components of $A_{a2}(D)\mathbf{v}_x$ are $0$ 
but the component of $A_{a2}(D)\mathbf{v}_x$ to $x$ is $1$. 
}
\label{Fig;add1AZRC}
\end{figure}

\begin{proof}
The argument is similar to that for knot diagrams due to Harada \cite{haradaM}. 
We orient $D$ arbitrarily. 
We splice $D$ at $x$. 
On Figure \ref{Fig;add1AZRC}, 
this splicing is illustrated as the transformation from top left to bottom left. 
The sub-diagram $D_x^0$ of the link component including  $x$  splits to 
the diagrams of two link components 
$D_x^1$ and $D_x^2$. 
We note $D_x^1$ and $D_x^2$ may intersects each other as link projections. 
Let $\gamma_i$ be an oriented arc in $D_x^i$ appearing after the splice at $x$ for each $i=1,2$. 
We may assume that $\gamma _1$ lies on the left of $\gamma _2$. 
For the diagram $(D\setminus D_x^0)\cup D_x^1 \cup D_x^2$, 
we take the componentwise Alexander numbering associated with $D_x^1$ 
such that the right and left regions of $\gamma _1$ are assigned $0$ and $1$ respectively. 
We denote this assignment of the indexes by $\mathbf{u}'$. 
On Figure \ref{Fig;add1AZRC}, 
$\mathbf{u}'$ is illustrated on bottom right. 
%
%
By Lemma \ref{lem;compAlex}, 
$\mathbf{u}'$ gives a kernel solution  of $A_{a2}((D\setminus D_x^0)\cup D_x^1 \cup D_x^2)$ 
if the spliced diagram has at least one crossing. 
We unsplice $(D\setminus D_x^0)\cup D_x^1 \cup D_x^2$ to $D$ at $x$. 
Let $\varepsilon =1$ if $x$ is a positive crossing, otherwise $\varepsilon =-1$.  
We assign the same integers to all regions of $D$ as the components of $\varepsilon \mathbf{u}'$, 
where the integer assigned to the region between $\gamma _1$ and $\gamma _2$ 
is assigned to the two regions splitting at $x$. 
On Figure \ref{Fig;add1AZRC}, 
this unsplicing  is illustrated as the transformation from bottom right to top right, 
where the crossing $x$ is negative and we have $\varepsilon =-1$. 
Then we obtain the desired $\mathbf{v}_x \in \mathbb{Z}^{n+d+1}$. 
\end{proof}

Theorem \ref{thm;add1AZRC} implies 
Theorem \ref{thm;harada;mat} (2), that is 
the existence of a solution of 
an alternating  integral region choice problem of the double counting rule 
for a knot diagram, 
by the same argument as that  due to Harada \cite{haradaM}. 

\begin{proof}[Proof of Theorem \ref{thm;harada;mat} (2)]
Applying Theorem \ref{thm;add1AZRC} for each crossing $x_i$,  
there exist $\mathbf{v}_i \in \mathbb{Z}^{n+d+1}$ such that 
any components of $A_{a2}(D)\mathbf{v}_i$ are $0$ 
but the $i$-th component of $A_{a2}(D)\mathbf{v}_i$ 
is $1$, $i=1,2, \cdots , n$. 
Let $c_i$ be the $i$-th component of $\mathbf{c}$. 
If we take $\displaystyle \mathbf{u}=-\sum _{i=1}^n c_i\mathbf{v}_i$, 
then we have $A_{a2}(D)\mathbf{u}+\mathbf{c}=\mathbf{0}$. 
\end{proof}

Next, we observe the alternating integral region choice problem of the single counting rule. 
The following lemma has been proved by Harada \cite{haradaM} 
for knot diagrams. 

\begin{lemma}
\label{lem;arcfixAZRCkerSingle}
Let $D$ be a link diagram with 
$n$ crossings, $n\geq 1$. 
We fix an arc $\gamma$ in the link diagram $D$,
and let $R$ and $R'$ be two regions which are the both sides of the arc $\gamma$. 
We take two arbitrary integers $a$ and $b$. 
Then there exists a kernel solution $\mathbf{u}$ for $A_{a1}(D)$ 
such that the components of $\mathbf{u}$ corresponding to $R$ and $R'$ are $a$ and $b$ respectively.  
\end{lemma}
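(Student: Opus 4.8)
The plan is to show that the $\mathbb{Z}$-linear map $\ker A_{a1}(D)\to\mathbb{Z}^2$ sending a kernel solution to its pair of components at $R$ and $R'$ is surjective. Once this is established, if $\mathbf{e}$ and $\mathbf{f}$ are kernel solutions whose $(R,R')$-values are $(0,1)$ and $(1,1)$ respectively, then $(b-a)\mathbf{e}+a\mathbf{f}$ is a kernel solution whose components at $R$ and $R'$ are $a$ and $b$. So it suffices to produce two kernel solutions of $A_{a1}(D)$ whose $(R,R')$-values generate $\mathbb{Z}^2$; since $R$ and $R'$ are the two distinct regions separated by the single arc $\gamma$, the value $(0,1)$ is the natural one to aim for first.

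First I would dispose of the case in which $D$ is irreducible. Then no region meets any crossing twice, so the last two rows of Table~1 agree in every relevant column and $A_{a1}(D)=A_{a2}(D)$. Fixing any orientation of $D$ and applying Lemma~\ref{lem;arcfixAZRCker}, an Alexander numbering normalised so that $R$ and $R'$ receive $0$ and $1$ supplies the first generator $\mathbf{e}$. For the second generator I take the all--ones vector: at each crossing the entries of $A_{a2}(D)$ are $+1,+1,-1,-1$ (Table~1), so every row sums to $0$ and hence $\mathbf{1}\in\ker A_{a2}(D)=\ker A_{a1}(D)$, realising $(1,1)$. These two span $\mathbb{Z}^2$, and since a global Alexander numbering exists on any oriented link diagram (Lemma~\ref{lem;AlexindKer}), this settles the irreducible case for links exactly as for knots.

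The remaining case, where $D$ has reducible crossings, is the genuine obstacle. The difficulty is structural: at a reducible crossing the single counting row is \emph{unbalanced}, with entries $+1,+1,-1$ (or $-1,-1,+1$) of nonzero sum, so $\mathbf{1}\notin\ker A_{a1}(D)$; worse, an Alexander numbering no longer lies in $\ker A_{a1}(D)$, since at a reducible crossing with doubled region $M$ the single counting row applied to an Alexander numbering equals, up to sign, the value at $M$ rather than $0$. Thus neither generator from the irreducible case survives, and I would instead argue by induction on the number of reducible crossings, the base case being the irreducible diagrams treated above. Given a reducible crossing $x$, I remove it by the nugatory reduction (tracking $\gamma$ and the regions $R,R'$) to obtain a diagram $D'$ with strictly fewer reducible crossings, and relate $\ker A_{a1}(D)$ to $\ker A_{a1}(D')$ by splitting the doubled region and restoring $x$; the inductive hypothesis furnishes a kernel solution on $D'$ with the prescribed values at $R,R'$, which I then lift across the restored crossing.

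The crux, and the step I expect to be the main obstacle, is precisely this lift: the restored row at $x$ is the unbalanced single counting equation, so a reduced solution satisfies it only after the two corners of the doubled region are re-weighted compatibly, and one must use the extra local degree of freedom created by splitting $M$, together with the freedom in the choice of the $D'$-solution, to meet this condition without disturbing the values at $R$ and $R'$. The one--crossing base diagrams, for which $\ker A_{a1}$ can be written out explicitly and visibly surjects onto the two regions flanking any arc, anchor the whole induction. As an alternative to the reduction argument one can follow Harada's scheme for knots, showing that the existence of a kernel solution with arbitrary prescribed values on the two sides of a fixed arc is preserved under crossing changes and under the Reidemeister moves and then reducing $D$ to a base diagram; in that route the only new work beyond the knot case is the bookkeeping of the single counting signs at the crossings created or destroyed by each move on a link diagram.
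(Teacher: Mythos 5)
Your base case is correct and essentially the paper's, up to a cosmetic change of generators: the paper combines two Alexander numberings realising $(0,1)$ and $(1,0)$ at $(R,R')$ and takes $a\mathbf{u}''+b\mathbf{u}'$, whereas you pair the $(0,1)$ solution from Lemma \ref{lem;arcfixAZRCker} with the all-ones vector realising $(1,1)$; since for an irreducible diagram every crossing meets four distinct regions, each row of $A_{a1}(D)=A_{a2}(D)$ sums to zero and $\mathbf{1}$ is indeed a kernel solution, so $(b-a)\mathbf{e}+a\mathbf{f}$ works. Your diagnosis of why the reducible case is harder is also accurate: at a reducible crossing the single-counting row is $\pm(m-p-q)$ where $m$ is the value on the doubled region, so neither $\mathbf{1}$ nor an Alexander numbering survives.

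However, the inductive step is a genuine gap: you name the lift across the restored crossing as ``the step I expect to be the main obstacle'' and describe in general terms that some local degree of freedom must be exploited, but you never exhibit the mechanism, and that mechanism is the entire content of the lemma. The paper's resolution is to \emph{splice} at the reducible crossing $y$, which disconnects the diagram into two disjoint pieces $D_y^1$ and $D_y^2$ (your ``nugatory reduction to a single diagram $D'$'' is not the right operation here), and then to exploit the disjointness: the inductive hypothesis applied to $D_y^1$ prescribes $a,b$ at $R,R'$ and \emph{outputs} some values $c$ at $R_y^1$ and $d$ at the region containing $R_y^0$; one then applies the inductive hypothesis a second time, to $D_y^2$ and the arc $\gamma_2$, to \emph{prescribe} the values $-c+d$ and $d$ on the two regions flanking $\gamma_2$. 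After unsplicing, the row at $y$ evaluates to $\pm\bigl(c-d+(-c+d)\bigr)=0$, and the values at $R,R'$ are untouched because they live in $D_y^1$. Note that this is exactly why the lemma must be stated with \emph{arbitrary} integers $a,b$ rather than just $(0,1)$: the second application of the inductive hypothesis needs to hit the specific pair $(-c+d,\,d)$ dictated by the first. Your proposal, which aims only to show surjectivity onto $\mathbb{Z}^2$ at the fixed pair $(R,R')$, does not by itself supply an induction hypothesis strong enough to carry out this matching on the second piece; you would need to formulate and prove the statement for an arbitrary arc and arbitrary target pair, which is what the paper does.
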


\begin{proof}
The argument is same as that for knot diagrams due to Harada \cite{haradaM}. 
His argument is  
similar to that due to Ahara and Suzuki \cite{aharasuzuki} for Lemma \ref{lem;arcfixDZRCkerSingle}. 

We use an induction on the number of reducible crossings. 

If the given link diagram $D$ is irreducible, 
the matrices $A_{a1}(D)$ and $A_{a2}(D)$ coincide.  
We apply Lemma \ref{lem;arcfixAZRCker} to the pairs $R, R'$ and $R', R$ 
in order to a kernel solutions $\mathbf{u}'$ and $\mathbf{u}''$ respectively. 
Then the components of $\mathbf{u}'$ corresponding to $R$ and $R'$ are $0$ and $1$ respectively,   
and the components of $\mathbf{u}''$ corresponding to $R$ and $R'$ are $1$ and $0$ respectively.  
Therefore $\mathbf{u}=a\mathbf{u}''+b\mathbf{u}'$ is the desired kernel solution for $A_{a1}(D)$ 
on the irreducible diagram $D$. 

\begin{figure}[htbp]
\begin{center}
\includegraphics[height=6cm,clip]{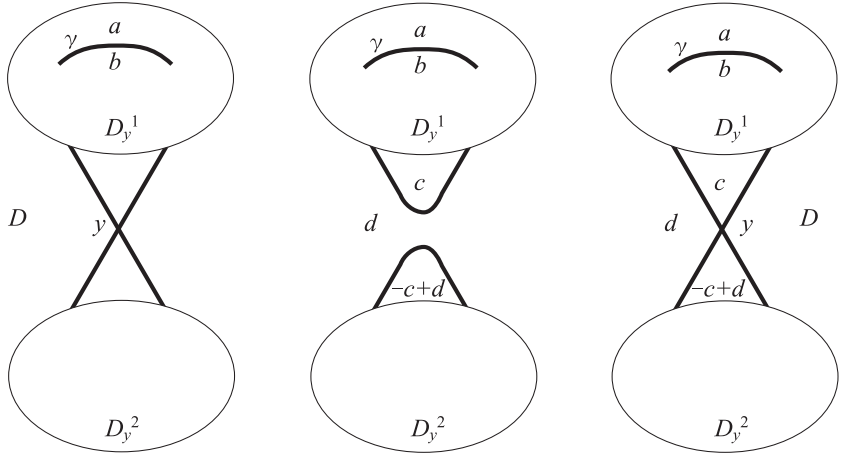}
\end{center}
\caption{Splicing at $y$ and obtaining a kernel solution.}
\label{Fig;arcfixAZRCkersingle}
\end{figure}

We assume that there exists a deseired kernel solution
if the number of reducible crossings is less than $k$. 
We suppose that the link diagram $D$ has $k$ reducible crossings. 
We take a reducible crossing $y$, 
and orient $D$ arbitrarily. 
We splice $D$ at $y$. 
The diagram $D$ splits to 
the disjoint link diagrams 
$D_y^1$ and $D_y^2$. 
Each of them has less reducible crossings than $k$. 
We may assume that the given arc $\gamma$ lies on $D_y^1$. 
On Figure \ref{Fig;arcfixAZRCkersingle}, 
we obtain the middle diagram 
splicing the reducible crossing $y$ of the left diagram, 
where we omit over/under information for $y$ and the orientation of $D$. 
Let $\gamma_i$ be an arc in $D_y^i$ appearing after the splice at $y$ for each $i=1,2$.  
We denote the region between the arcs $\gamma _1$ and $\gamma _2$ by $R_y^0$, 
and another region adjacent to $\gamma _j$ by $R_y^j$, $j=1,2$. 
We ignore $D_y^2$. 
If $D_y^1$ has no crossing, 
we assign $a$ and $b$ to the regions including $R$ and $R'$ respectively, 
and $0$ to other regions of $D_y^1$ 
where we may assign arbitrary integers. 
If $D_y^1$ has at least one crossing, 
we apply the assumption of induction to $D_y^1$ and $\gamma$. 
Then we  obtain a kernel solution for $A_{a1}(D_y^1)$  
whose components corresponding to the regions including $R$ and $R'$ are $a$ and $b$ respectively.  
Let $c\in \mathbb{Z}$ assigned to $R_y^1$ 
and $d \in \mathbb{Z}$ to the region of $D_y^1$ including $R_y^0$. 
On the middle of Figure \ref{Fig;arcfixAZRCkersingle}, we write $c$ and $d$, 
though we omit $\gamma _i$ and $R_y^j$. 
We assign the integer $-c+d$ to the region $R_y^2$ on $D_y^1\cup D_y^2$ as the middle of Figure \ref{Fig;arcfixAZRCkersingle}. 
We ignore $D_y^1$.  
If the diagram $D_y^2$ has no crossing, 
we assign  $0$ 
to the regions of $D_y^2$ 
including  neither $R_y^0$ nor $R_y^2$, 
though we may assign arbitrary integers. 
Otherwise we apply the assumption of  the induction to $D_y^2$ and $\gamma _2$,  
then we obtain a kernel solution for $A_{a1}(D_y^2)$ 
whose components corresponding 
to $R_y^2$ and the region including $R_y^0$ are $-c+d$ and $d$ respectively.  
Let $\tilde{R}$ be a region of the diagram $D_y^1 \cup D_y^2$. 
If $\tilde{R}$ is $R_y^0$, we assign $d$ to $\tilde{R}=R_y^0$. 
Otherwise $\tilde{R}$ coincides with one of regions of $D_y^1$ or $D_y^2$, 
then we assign to $\tilde{R}$ same integer as the region of  $D_y^1$ or $D_y^2$. 
Therefore we obtain a kernel solution $\mathbf{u}'$ of $A_{a1}(D_y^1 \cup D_y^2)$. 
We unsplice $D_y^1 \cup D_y^2$ at $y$. 
We assign the same integer as either $\mathbf{u}'$ to all regions of $D$, 
where 
the region touching  $y$ twice is assigned $d$, 
as illustrated on the right of Figure \ref{Fig;arcfixAZRCkersingle}. 
Then we obtain the desired kernel solution for $A_{a1}(D)$ since we have $c-d+(-c+d)=0$. 
\end{proof}

The following lemma also has been proved by Harada \cite{haradaM} for knot diagrams. 
He proved it as a corollary to Lemma \ref{lem;arcfixAZRCkerSingle}: 
the region $R_y^2$ in the proof of  Lemma \ref{lem;arcfixAZRCkerSingle} 
is assigned $-c+d+\varepsilon$  
instead of $-c+d$, where $\varepsilon =1$ if $y$ is positive, otherwise $\varepsilon =-1$. 
We give an alternative proof. 

\begin{lemma}
\label{lem;add1AZRCred}
Let $D$ be a link diagram with $d$ connected components and $n$ crossings, $n\geq 1$. 
Let $y$ be a reducible crossing of $D$. 
There exist $\mathbf{v}_y \in \mathbb{Z}^{n+d+1}$ such that 
any components of $A_{a1}(D)\mathbf{v}_y$ are $0$ 
but the component of $A_{a1}(D)\mathbf{v}_y$ to $y$ is $1$. 
\end{lemma}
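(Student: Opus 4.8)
The plan is to adapt the splicing construction of Theorem~\ref{thm;add1AZRC} to the single counting rule at a reducible crossing. First I would orient $D$ arbitrarily and splice at $y$. Because $y$ is reducible, this splicing disconnects $D$ into two disjoint link diagrams $D_y^1$ and $D_y^2$, exactly as in the proof of Lemma~\ref{lem;arcfixAZRCkerSingle}; let $\gamma_i$ be the arc of $D_y^i$ created by the splice. The region $R_y^0$ lying between $\gamma_1$ and $\gamma_2$ is the one touching $y$ twice, and the other two sides $R_y^1$ of $\gamma_1$ and $R_y^2$ of $\gamma_2$ are the remaining regions at $y$. Splicing $y$ raises the number of connected components from $d$ to $d+1$ and lowers the number of crossings from $n$ to $n-1$, so by Lemma~\ref{lem;Euler} the diagrams $D$ and $D_y^1\sqcup D_y^2$ have the same number $n+d+1$ of regions and their regions are in the natural bijection, with $R_y^0,R_y^1,R_y^2$ corresponding to themselves.

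Next I would produce a kernel solution on the spliced diagram with prescribed values near the splice. Applying Lemma~\ref{lem;arcfixAZRCkerSingle} to $D_y^1$ and the arc $\gamma_1$, whose two sides are $R_y^0$ and $R_y^1$, yields a kernel solution for $A_{a1}(D_y^1)$ taking any prescribed integers on $R_y^0$ and $R_y^1$; likewise on $D_y^2$ and $\gamma_2$ I can prescribe the values on $R_y^0$ and $R_y^2$. (If some $D_y^i$ has no crossing it is a simple closed curve, and any assignment of its two regions is vacuously a kernel solution, so this covers the crossingless pieces as well.) Choosing the two prescriptions to agree on the shared region $R_y^0$, they glue to one assignment $\mathbf{u}'$ whose restriction to each $D_y^i$ is a kernel solution for $A_{a1}(D_y^i)$; since every crossing of $D_y^1\sqcup D_y^2$ lies in one of the two pieces, $\mathbf{u}'$ is a kernel solution for $A_{a1}(D_y^1\sqcup D_y^2)$.

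Finally I would unsplice and read off the effect. Transporting $\mathbf{u}'$ through the region bijection gives a vector $\mathbf{v}_y\in\mathbb{Z}^{n+d+1}$. Unsplicing is a purely local change at $y$ and leaves the four regions around every other crossing unchanged, so every component of $A_{a1}(D)\mathbf{v}_y$ other than the one at $y$ remains $0$. At $y$ the two corners occupied by $R_y^0$ are opposite and carry a common alternating sign $\sigma\in\{\pm1\}$, while $R_y^1$ and $R_y^2$ fill the remaining two opposite corners with sign $-\sigma$; in the single counting rule $R_y^0$ is counted once, so the $y$-component equals $\sigma(u_{R_y^0}-u_{R_y^1}-u_{R_y^2})$. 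This is a surjective $\mathbb{Z}$-linear function of the freely prescribed values, so choosing, say, $u_{R_y^0}=\sigma$ and $u_{R_y^1}=u_{R_y^2}=0$ makes it exactly $1$, which is the required $\mathbf{v}_y$.

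The step I expect to be the main obstacle is the sign bookkeeping at $y$: one must confirm that the twice-touching region enters the single counting rule with coefficient $\sigma$ rather than $2\sigma$, so that the $y$-score is $\sigma(u_{R_y^0}-u_{R_y^1}-u_{R_y^2})$ and can be normalized to $+1$ regardless of $\sigma$; this is what forces the crossing-sign-dependent adjustment and matches Harada's corollary, in which the analogous change is by $\varepsilon$. A secondary point, already supplied by the proof of Lemma~\ref{lem;arcfixAZRCkerSingle}, is that the oriented splicing at a reducible crossing genuinely disconnects $D$ and induces the region bijection used above.
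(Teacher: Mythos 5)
Your proof is correct and follows essentially the same route as the paper's: splice at the reducible crossing $y$, use Lemma \ref{lem;arcfixAZRCkerSingle} to assemble a kernel solution on the two resulting disjoint diagrams with prescribed values on the three regions at $y$, unsplice, and normalize the $y$-entry to $1$. The only cosmetic differences are that the paper prescribes $(0,0,1)$ on $(R_y^0,R_y^1,R_y^2)$ (taking the zero kernel solution on $D_y^1$ and invoking the lemma only on $D_y^2$) and then rescales by the crossing sign $\varepsilon$, whereas you prescribe $(\sigma,0,0)$ with $\sigma$ the common corner sign of the twice-touching region.
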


\begin{figure}[htbp]
\begin{center}
\includegraphics[height=6cm,clip]{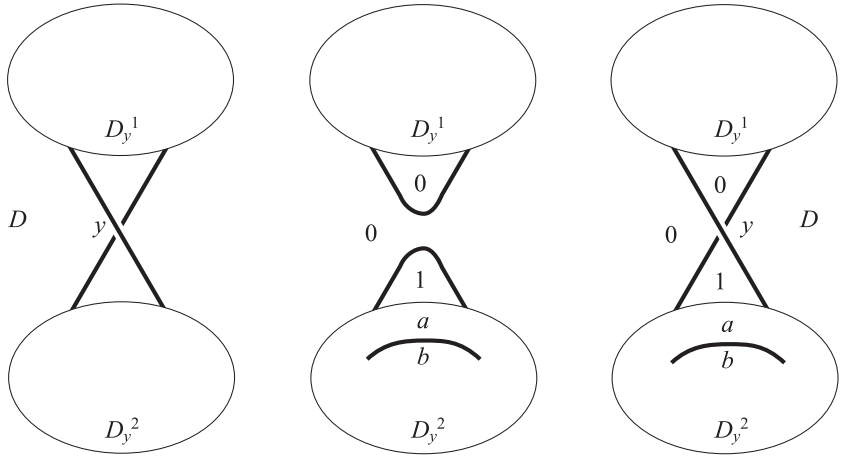}
\end{center}
\caption{Obtaining $\mathbf{v}_y$ for the reducible and positive crossing $y$.}
\label{Fig;add1AZRCred}
\end{figure}

\begin{figure}[htbp]
\begin{center}
\includegraphics[height=6cm,clip]{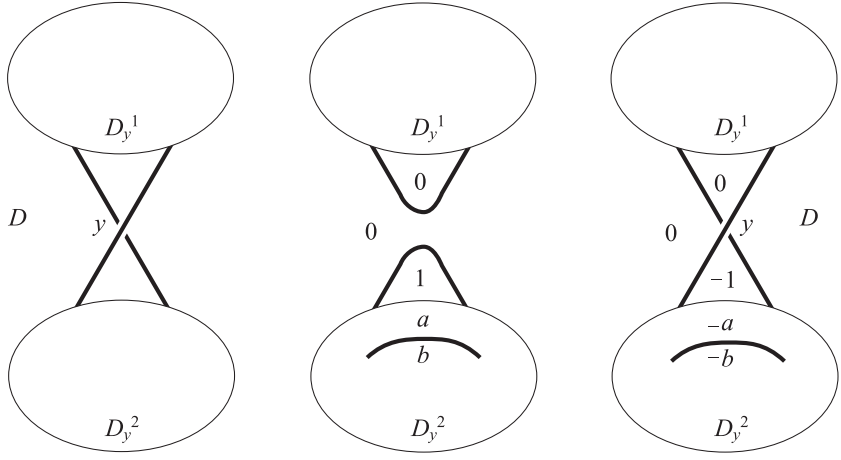}
\end{center}
\caption{Obtaining $\mathbf{v}_y$ for the reducible and negative crossing $y$.}
\label{Fig;add1AZRCred2}
\end{figure}

\begin{proof}
We orient $D$ arbitrarily. 
We splice $D$ at the reducible crossing $y$. 
The diagram $D$ splits to 
the two disjoint link diagrams 
$D_y^1$ and $D_y^2$. 
Let $\gamma_i$ be an arc in $D_y^i$ appearing after the splice at $y$ for each $i=1,2$.  
We denote the region between the arcs $\gamma _1$ and $\gamma _2$ by $R_y^0$, 
and another region adjacent to $\gamma _j$ by $R_y^j$, $j=1,2$. 
We assign integers $0, 0, 1$ to $R_y^0, R_y^1, R_y^2$ respectively,  
as illustrated on the middle of Figure \ref{Fig;add1AZRCred} or \ref{Fig;add1AZRCred2}. 
If $D_y^2$ has at least one crossing, 
we apply Lemma \ref{lem;arcfixAZRCkerSingle} to 
$D_y^2$ and $\gamma _2$, 
in order to obtain a kernel solution for 
$A_{a1}(D_y^2)$ such that 
the region including $R_y^0$ is assigned $0$ and $R_y^2$ is assigned $1$. 
If $D_y^2$ has no crossing, 
we assign  $0$ 
to the regions of $D_y^2$ 
including  neither $R_y^0$ nor $R_y^2$, 
though we may assign arbitrary integers. 
We assign $0$ to all regions of $D_y^1$: 
it gives the trivial kernel solution for $A_{a1}(D_y^1)$ if $D_y^1$ has at least one crossing. 
Each of the other regions of $D_y^1 \cup D_y^2$ than  $R_y^0, R_y^1, R_y^2$ 
is a region of $D_y^1$ or $D_y^2$, 
then it has been assigned an integer. 
Therefore we obtain a kernel solution $\mathbf{u}'$ for $A_{a1}(D_y^1 \cup D_y^2)$ 
such that the components of $\mathbf{u}'$ corresponding to $R_y^0, R_y^1, R_y^2$ are  $0, 0, 1$ respectively. 
Let $\varepsilon =1$ if $y$ is a positive crossing, otherwise $\varepsilon =-1$.  
We unsplice at $y$ 
and assign the same integers to all regions of $D$ 
as $\varepsilon \mathbf{u}'$, 
where the region touching $y$ twice is assigned $0$. 
On the right of Figure \ref{Fig;add1AZRCred}, 
the crossing $y$ is positive 
and regions of $D$ are assigned the components of $\mathbf{u}'$. 
On the right of Figure \ref{Fig;add1AZRCred2}, 
the crossing $y$ is negative 
and regions of $D$ are assigned the components of $-\mathbf{u}'$. 
Then we obtain the desired $\mathbf{v}_y \in \mathbb{Z}^{n+d+1}$. 
\end{proof}

Combining Lemma \ref{lem;add1AZRCred} with 
Theorem \ref{thm;harada;mat} (2), 
we obtain the proof of 
Theorem \ref{thm;harada;mat} (1), that is 
the existence of a solution of 
an alternating  integral region choice problem of the single counting rule 
for a knot diagram, 
by the same argument as that  due to Harada \cite{haradaM}. 

\begin{proof}[Proof of Theorem \ref{thm;harada;mat} (1)]
If the given knot diagram $D$ is irreducible, 
then we have $A_{a1}(D)=A_{a2}(D)$ and 
a solution of the double counting rule, which has been obtained,  is also 
a solution of the single counting rule. 

We suppose that the knot diagram $D$ has at least one reducible crossing. 
For a region $R_j$, 
let $\mathcal{X}_j$ be the set of reducible crossings touched by $R_j$ twice, $j=1, \cdots , n+2$. 
A set $\mathcal{X}_j$ might be empty. 
Applying Lemma  \ref{lem;add1AZRCred} 
for each reducible crossing $y\in \mathcal{X}_j$, 
we obtain $\mathbf{v}_y \in \mathbb{Z}^{n+2}$ such that 
any components of $A_{a1}(D)\mathbf{v}_y$ are $0$ 
but the component of $A_{a1}(D)\mathbf{v}_y$ to $y$ is $1$. 
We take $\mathbf{r}_j\in \mathbb{Z}^{n+2}$ such that 
any components of $\mathbf{r}_j$ are $0$ 
but the $j$-th component is $1$. 
Choosing $R_j$ once corresponds with $\mathbf{r}_j$. 
By the definitions of the alternating region choice matrices, 
we have 
\[
 A_{a2}(D)\mathbf{r}_j - A_{a1}(D)\mathbf{r}_j =\sum _{y\in \mathcal{X}_j} A_{a1}(D)\mathbf{v}_y.
\] 

Applying Theorem \ref{thm;harada;mat} (2), which has been proved, 
we obtain a solution of double counting rule, 
$\mathbf{w} \in \mathbb{Z}^{n+2}$ with $A_{a2}(D)\mathbf{w}+\mathbf{c}=\mathbf{0}$. 
Let $w_j$ be the $j$-th component of $\mathbf{w}$, $j=1, \cdots , n+2$. 
We note $\displaystyle \mathbf{w}=\sum _j w_j\mathbf{r}_j$. 
We take $\displaystyle \mathbf{u}=\mathbf{w}+\sum _j w_j \sum _{y\in \mathcal{X}_j} \mathbf{v}_y$. 
Then we have 
\begin{eqnarray*}
A_{a1}(D)\mathbf{u}
&=& A_{a1}(D)\mathbf{w}+\sum _j w_j \sum _{y\in \mathcal{X}_j} A_{a1}(D)\mathbf{v}_y \\
&=& \sum _j w_j \left( A_{a1}(D)\mathbf{r}_j + \sum _{y\in \mathcal{X}_j} A_{a1}(D)\mathbf{v}_y \right) \\
&=& \sum _j w_j A_{a2}(D)\mathbf{r}_j \\
&=& A_{a2}(D)\mathbf{w} \\
&=& -\mathbf{c}. 
\end{eqnarray*}
Therefore we have $A_{a1}(D)\mathbf{u} +\mathbf{c}=\mathbf{0}$. 
\end{proof}

\begin{remark}
In Section \ref{sect;RCmatrixS}, 
we prove that 
the first and second results of Theorem \ref{thm;harada;mat}  are equivalent. 
\end{remark}

The proof of Lemma \ref{lem;add1AZRCred} for knot diagrams due to Harada \cite{haradaM} 
implies the following fact. 
We give an alternative proof. 

\begin{lemma}\label{lem;arcfixadd1AZRCred}
Let $D$ be a link diagram with $d$ connected components and $n$ crossings, $n\geq 1$. 
Let $y$ be a reducible crossing of $D$. 
We fix an arc $\gamma$ in the link diagram $D$,
and let $R$ and $R'$ be two regions which are the both sides of the arc $\gamma$. 
We take two arbitrary integers $a$ and $b$. 
There exist $\mathbf{v}_y \in \mathbb{Z}^{n+d+1}$ such that 
any components of $A_{a1}(D)\mathbf{v}_y$ are $0$ 
but the component of $A_{a1}(D)\mathbf{v}_y$ to $y$ is $1$,  
and such that the components of $\mathbf{v}_y$ corresponding to $R$ and $R'$ are $a$ and $b$ respectively.  
\end{lemma}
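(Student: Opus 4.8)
The plan is to obtain the desired vector by superimposing a solution provided by the weaker Lemma~\ref{lem;add1AZRCred} with a kernel solution that corrects the prescribed values on the two regions bordering $\gamma$, exploiting the linearity of the homomorphism $\Phi_{a1}(D)$. Since the only additional requirement over Lemma~\ref{lem;add1AZRCred} is the constraint at $R$ and $R'$, and since kernel solutions can be added freely without disturbing the image under $A_{a1}(D)$, the two existence statements combine directly.

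First I would apply Lemma~\ref{lem;add1AZRCred} to the reducible crossing $y$ to obtain a vector $\mathbf{v}_y^0 \in \mathbb{Z}^{n+d+1}$ such that every component of $A_{a1}(D)\mathbf{v}_y^0$ vanishes except the component corresponding to $y$, which equals $1$. Let $a_0$ and $b_0$ denote the components of $\mathbf{v}_y^0$ corresponding to the regions $R$ and $R'$ respectively; these are whatever values the construction in Lemma~\ref{lem;add1AZRCred} happens to produce, and in general they differ from the prescribed $a$ and $b$. Next I would correct these two entries without altering the image under $A_{a1}(D)$. Because $y$ is a crossing, the diagram $D$ has at least one crossing, so Lemma~\ref{lem;arcfixAZRCkerSingle} applies to the arc $\gamma$ with the pair of regions $R, R'$; feeding it the integers $a - a_0$ and $b - b_0$ yields a kernel solution $\mathbf{u} \in \mathbb{Z}^{n+d+1}$ for $A_{a1}(D)$, so that $A_{a1}(D)\mathbf{u} = \mathbf{0}$, whose components corresponding to $R$ and $R'$ are $a - a_0$ and $b - b_0$ respectively.

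Finally I would set $\mathbf{v}_y = \mathbf{v}_y^0 + \mathbf{u}$. By linearity, $A_{a1}(D)\mathbf{v}_y = A_{a1}(D)\mathbf{v}_y^0 + A_{a1}(D)\mathbf{u} = A_{a1}(D)\mathbf{v}_y^0$, so every component of $A_{a1}(D)\mathbf{v}_y$ is $0$ except the $y$-component, which remains $1$. Moreover, the components of $\mathbf{v}_y$ corresponding to $R$ and $R'$ become $a_0 + (a - a_0) = a$ and $b_0 + (b - b_0) = b$, exactly as required. I do not anticipate a genuine obstacle here, since the argument is a purely linear-algebraic superposition of two previously established existence results; the only point deserving care is that Lemma~\ref{lem;arcfixAZRCkerSingle} truly delivers a kernel solution realizing \emph{any} prescribed pair of integers on the two regions flanking a fixed arc, which is precisely the degree of freedom needed to absorb the discrepancy between $(a_0, b_0)$ and $(a, b)$.
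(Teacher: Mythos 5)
Your proposal is correct and coincides with the paper's own proof: both apply Lemma \ref{lem;add1AZRCred} to get a vector hitting the $y$-component, then add the kernel solution from Lemma \ref{lem;arcfixAZRCkerSingle} with prescribed values $a-a'$ and $b-b'$ on $R$ and $R'$. No issues.
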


\begin{proof}
Applying Lemma \ref{lem;add1AZRCred}, 
we obtain $\mathbf{v}'_y \in \mathbb{Z}^{n+d+1}$ such that 
any components of $A_{a1}(D)\mathbf{v}'_y$ are $0$ 
but the component of $A_{a1}(D)\mathbf{v}'_y$ to $y$ is $1$. 
We denote 
the components of $\mathbf{v}'_y$ corresponding to $R$ and $R'$ 
by $a'$ and $b'$ respectively. 
Applying  Lemma \ref{lem;arcfixAZRCkerSingle}, 
there exists a kernel solution $\mathbf{u}$ for $A_{a1}(D)$ 
such that the components of $\mathbf{u}$ corresponding to $R$ and $R'$ 
are $a-a'$ and $b-b'$ respectively.  
Let $\mathbf{v}_y=\mathbf{u}+\mathbf{v}'_y$. 
Then we obtain the desired $\mathbf{v}_y$.
\end{proof}

We note that 
we use Lemma \ref{lem;add1AZRCred} but does not use Lemma \ref{lem;arcfixadd1AZRCred} 
to prove  Theorem \ref{thm;harada;mat} (1) in this article. 

\begin{remark}
In \cite{shimizu}, 
Shimizu took checkerboard colorings to show that 
a region crossing change is an unknotting operation on a knot diagram. 
In the above argument for Theorem \ref{thm;harada;mat}, 
we take Alexander numberings instead of checkerboard colorings. 
Then an extension of her argument is given. 
\end{remark}

\section{Solutions of the definite integral region choice problem on knot diagrams}\label{sect;DZRCPpf}


In this section, we give an alternative proof of Theorem \ref{thm;aharasuzuki;mat}.

First, we observe the definite integral region choice problem of the double counting rule.

\begin{lemma}
\label{lem;arcfixDZRCker}
Let $D$ be a link diagram or projection with 
$n$ crossings, $n\geq 1$. 
We fix an arc $\gamma$ in the link diagram $D$,
and let $R$ and $R'$ be two regions which are the both sides of the arc $\gamma$. 
Then there exists a kernel solution for $A_{d2}(D)$ 
such that the components of $\mathbf{u}$ corresponding to $R$ and $R'$ are $0$ and $1$ respectively.  
\end{lemma}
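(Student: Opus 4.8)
The plan is to reduce the statement to its alternating counterpart, Lemma~\ref{lem;arcfixAZRCker}, and then transport the solution across the dictionary furnished by Lemma~\ref{lem;KerAtoD}.

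First I would treat the case in which $D$ is a link diagram. By Lemma~\ref{lem;arcfixAZRCker} there is a kernel solution $\mathbf{u}'$ for $A_{a2}(D)$ whose components at $R$ and $R'$ are $0$ and $1$. I would then fix any checkerboard coloring $\{c_S\}_S$ and pass to the vector $\mathbf{u}$ with components $(-1)^{c_S}u'_S$, which is a kernel solution for $A_{d2}(D)$ by Lemma~\ref{lem;KerAtoD}. Because $u'_R=0$, the component of $\mathbf{u}$ at $R$ is still $0$, while the component at $R'$ is $(-1)^{c_{R'}}\cdot 1=\pm 1$. Should it be $-1$, I would replace $\mathbf{u}$ by $-\mathbf{u}$, which remains a kernel solution, keeps the $R$-component equal to $0$, and turns the $R'$-component into $1$; alternatively one can fix the sign from the start by choosing the coloring with $c_{R'}=0$, which is legitimate since $R$ and $R'$ are adjacent across $\gamma$ and the coloring can be complemented on each connected component.

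It remains to handle the case of a projection, for which $A_{a2}(D)$ is not defined. Here I would use that the entries $a^{(d2)}_{ij}$ record only whether and how many times a region meets a crossing, so $A_{d2}$ is unchanged by the choice of over/under information. Assigning any such information turns $D$ into a link diagram $D'$ with the same regions and $A_{d2}(D')=A_{d2}(D)$; the solution produced above for $D'$ is then the desired one for $D$.

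I do not expect a genuine obstacle here: the whole difficulty of this arc-fixing statement already lives in Lemma~\ref{lem;arcfixAZRCker} (hence ultimately in the Alexander-numbering kernel solutions of Lemmas~\ref{lem;AlexindKer} and~\ref{lem;compAlex}). The only points demanding care are the sign bookkeeping needed to land on exactly $0$ and $1$ rather than $0$ and $-1$, which the closure of the kernel under negation settles, and the routine reduction of the projection case to the diagram case.
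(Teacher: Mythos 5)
Your proposal is correct and follows essentially the same route as the paper's own proof: reduce the projection case to the diagram case by adding arbitrary over/under information, invoke Lemma~\ref{lem;arcfixAZRCker} to get a kernel solution for $A_{a2}(D)$ with values $0$ and $1$ at $R$ and $R'$, and transport it via Lemma~\ref{lem;KerAtoD} with the checkerboard coloring chosen so that $R'$ receives color $0$. The only cosmetic difference is that you also offer the alternative of negating the resulting vector instead of normalizing the coloring, which the paper does not need.
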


\begin{proof}
If $D$ is a link projection, we arbitrarily add over/under information to crossings. 
Then we may assume that $D$ is a link diagram. 
By Lemma \ref{lem;arcfixAZRCker}, 
there exists a kernel solution $\tilde{\mathbf{u}}$ for $A_{a2}(D)$ 
such that the components of $\tilde{\mathbf{u}}$ corresponding to $R$ and $R'$ are $0$ and $1$ respectively.  
We fix a checkerboard coloring such that the region $R'$ is assigned $0$.
Applying Lemma \ref{lem;KerAtoD} to $\tilde{\mathbf{u}}$,  
we obtain a kernel solution $\mathbf{u}$ for $A_{d2}(D)$ 
such that the components of $\mathbf{u}$ corresponding to $R$ and $R'$ are $0$ and $1$ respectively.  
\end{proof}

Figure \ref{Fig;arcfixDZRCker} gives an example of 
a link diagram with a kernel solution for a definite region choice matrix 
such that two regions adjacent to the arc $\gamma$ are assigned $0$ and $1$. 
This kernel solution is obtained from Figure \ref{Fig;arcfixAZRCker} applying Lemma \ref{lem;KerAtoD}. 

\begin{figure}[htbp]
\begin{center}
\includegraphics[height=3.5cm,clip]{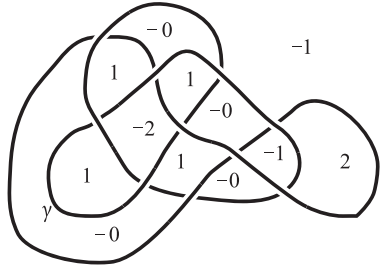}
\end{center}
\caption{A kernel solution for a definite region choice matrix.}
\label{Fig;arcfixDZRCker}
\end{figure}

\begin{remark}\label{rem;arcfixDZRCker}
In \cite{aharasuzuki}, 
Ahara and Suzuki proved Lemma \ref{lem;arcfixDZRCker} for a knot diagram, 
showing that Reidemeister moves preserve 
the existence of the kernel solution, and that 
the knot diagram with only one crossing has a kernel solution. 
\end{remark}

The following theorem also has been proved by Ahara and Suzuki \cite{aharasuzuki} 
for a knot diagram splicing at the given crossing and applying Lemma \ref{lem;arcfixDZRCker}. 
We give an alternative proof below. 

\begin{theorem}
\label{thm;add1DZRC}
Let $D$ be a link diagram or projection with $d$ connected components and $n$ crossings, $n\geq 1$. 
We take a crossing $x$ of $D$ of arcs in same link component. 
There exist $\mathbf{v}_x \in \mathbb{Z}^{n+d+1}$ such that 
any components of $A_{d2}(D)\mathbf{v}_x$ are $0$ 
but the component of $A_{d2}(D)\mathbf{v}_x$ to $x$ is $1$. 
\end{theorem}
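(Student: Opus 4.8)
The plan is to deduce the definite statement from its alternating counterpart, Theorem \ref{thm;add1AZRC}, by transporting through a fixed checkerboard coloring, exactly as kernel solutions were transported in Lemma \ref{lem;KerAtoD}. First, if $D$ is a projection I would arbitrarily assign over/under information at every crossing to obtain a diagram $\hat{D}$, as in the proof of Lemma \ref{lem;arcfixDZRCker}; since the definite double counting rule ignores over/under information, $A_{d2}(\hat{D})=A_{d2}(D)$, so it suffices to treat the case where $D$ is a diagram.

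Next I would fix a checkerboard coloring $\{c_R\}_R$ of $D$ and record the pointwise relation between the two matrices induced by the sign change $u_R\mapsto (-1)^{c_R}u_R$. For $\mathbf{u}\in\mathbb{Z}^{n+d+1}$ let $\tilde{\mathbf{u}}$ denote the vector with components $(-1)^{c_R}u_R$. Repeating the computation in the proof of Lemma \ref{lem;KerAtoD}, but without assuming that $\mathbf{u}$ lies in the kernel, shows that at each crossing $x_i$ the definite-counting sum of the values of $\tilde{\mathbf{u}}$ equals, up to an overall sign $\eta_i\in\{\pm1\}$ determined by how the coloring meets the alternating signs at $x_i$, the alternating-counting sum of the values of $\mathbf{u}$; that is, $A_{d2}(D)\tilde{\mathbf{u}}=E\,A_{a2}(D)\mathbf{u}$ for the diagonal matrix $E=\mathrm{diag}(\eta_1,\dots,\eta_n)$. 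This holds at reducible crossings as well, since there the repeated region occupies a single diagonal and contributes with coefficient $\pm2$ on both sides.

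With this identity in hand the theorem follows quickly. By Theorem \ref{thm;add1AZRC}, applied using the hypothesis that the two arcs at $x$ lie in the same component, there is $\mathbf{v}'\in\mathbb{Z}^{n+d+1}$ with $A_{a2}(D)\mathbf{v}'=\mathbf{e}_x$, the vector that is $0$ except for a $1$ in the entry for $x$. Applying the sign change gives $A_{d2}(D)\tilde{\mathbf{v}}'=E\,\mathbf{e}_x=\eta_x\mathbf{e}_x$, so $\mathbf{v}_x:=\eta_x\tilde{\mathbf{v}}'$ satisfies $A_{d2}(D)\mathbf{v}_x=\mathbf{e}_x$, as required; equivalently one takes $\pm\tilde{\mathbf{v}}'$, choosing the sign that makes the $x$-entry $+1$, so that $\eta_x$ need never be computed explicitly.

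Since the displayed identity is essentially the routine computation already carried out in Lemma \ref{lem;KerAtoD}, the only point demanding care is the bookkeeping of signs. I must check that a single global checkerboard coloring aligns corner-by-corner with the alternating $\pm1$ pattern in a consistent way, which it does because both patterns are constant along each diagonal at a crossing and opposite corners receive the same color, and that the reducible crossings, where both $A_{d2}$ and $A_{a2}$ carry a coefficient $\pm2$, are covered by the same identity. I expect this sign-tracking, rather than any structural difficulty, to be the main thing to get right.
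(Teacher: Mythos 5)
Your proposal is correct and follows essentially the same route as the paper: reduce a projection to a diagram, invoke Theorem \ref{thm;add1AZRC} to get $\mathbf{w}_x$ with $A_{a2}(D)\mathbf{w}_x=\mathbf{e}_x$, and transport it through the sign change $u_R\mapsto(-1)^{c_R}u_R$ of a fixed checkerboard coloring, exactly as in Lemma \ref{lem;KerAtoD}. The only (cosmetic) difference is how the residual sign at $x$ is handled: the paper normalizes the coloring so that a $+1$-corner region at $x$ receives color $0$, forcing $\eta_x=1$, whereas you multiply the transported vector by $\eta_x$ afterwards; both are valid.
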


\begin{proof}
If $D$ is a link projection, we arbitrarily add over/under information to crossings. 
Then we may assume that $D$ is a link diagram. 
By Theorem \ref{thm;add1AZRC}, 
there exist $\mathbf{w}_x \in \mathbb{Z}^{n+d+1}$ such that 
any components of $A_{a2}(D)\mathbf{w}_x$ are $0$ 
but the component of $A_{a2}(D)\mathbf{w}_x$ to $x$ is $1$. 
For each region $R$, 
let $w_R$ be the component of $\mathbf{w}_x$ to $R$.  
We denote the four corners touching $x$ by $C_1, C_2, C_3, C_4$ clockwise, 
and the regions including $C_j$ by $R_j$, $j=1,2,3,4$. 
Then we have $ w_{R_1}-w_{R_2}+w_{R_3}-w_{R_4}=1$. 
We fix the checkerboard coloring such that 
the region $R_1$ is assigned $0$. 
For each region $R$, 
let $c_R$ be the integer assigned by this checkerboard coloring. 
Let $\mathbf{v}_x$ be the vector in $\mathbb{Z}^{n+d+1}$ 
such that 
the component to $R$ is $\displaystyle (-1)^{c_R}w_R$. 
By similar argument to the proof of Lemma \ref{lem;KerAtoD}, 
it is shown that 
any components of $A_{d2}(D)\mathbf{v}_x$ are $0$ 
but the component of $A_{d2}(D)\mathbf{v}_x$ to $x$ is $1$. 
Figure \ref{Fig;add1DZRC} gives an example of the above process 
from $\mathbf{w}_x$ to $\mathbf{v}_x$, 
where $\mathbf{w}_x$ is a solution illustrated on top right of Figure \ref{Fig;add1AZRC}. 
\end{proof}

\begin{figure}[htbp]
\begin{center}
\includegraphics[height=3.5cm,clip]{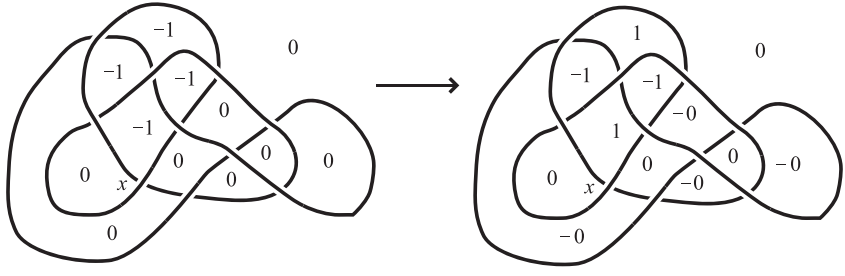}
\end{center}
\caption{Finding 
$\mathbf{v}_x 
$ such that 
any components of $A_{d2}(D)\mathbf{v}_x$ are $0$ 
but the component of $A_{d2}(D)\mathbf{v}_x$ to $x$ is $1$. 
}
\label{Fig;add1DZRC}
\end{figure}

In the above proof, we do not use Lemma  \ref{lem;arcfixDZRCker} immediately.  

Theorem \ref{thm;add1DZRC} implies 
Theorem \ref{thm;aharasuzuki;mat} (2), that is 
the existence of a solution of 
a definite  integral region choice problem of the double counting rule 
for a knot diagram, 
by the same argument as that  
due to Ahara and Suzuki \cite{aharasuzuki}. 

\begin{proof}[Proof of Theorem \ref{thm;aharasuzuki;mat} (2)]
Applying Theorem \ref{thm;add1DZRC} for each crossing $x_i$,  
there exist $\mathbf{v}_i \in \mathbb{Z}^{n+d+1}$ such that 
any components of $A_{d2}(D)\mathbf{v}_i$ are $0$ 
but the $i$-th component of $A_{a2}(D)\mathbf{v}_i$ is $1$, $i=1,2, \cdots , n$. 
Let $c_i$ be the $i$-th component of $\mathbf{c}$. 
If we take $\displaystyle \mathbf{u}=-\sum _{i=1}^n c_i\mathbf{v}_i$, 
then we have $A_{d2}(D)\mathbf{u}+\mathbf{c}=\mathbf{0}$. 
\end{proof}

Next, we observe the definite integral region choice problem of the single counting rule. 
The following lemma has been proved by Ahara and Suzuki \cite{aharasuzuki} 
for knot diagrams. 
We give an alternative proof. 

\begin{lemma}
\label{lem;arcfixDZRCkerSingle}
Let $D$ be a link diagram or projection with 
$n$ crossings, $n\geq 1$.
We fix an arc $\gamma$ in the link diagram $D$,
and let $R$ and $R'$ be two regions which are the both sides of the arc $\gamma$. 
We take two arbitrary integers $a$ and $b$. 
Then there exists a kernel solution $\mathbf{u}$ for $A_{d1}(D)$ 
such that the components of $\mathbf{u}$ corresponding to $R$ and $R'$ are $a$ and $b$ respectively.  
\end{lemma}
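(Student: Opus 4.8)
The plan is to reduce the single-counting statement for the definite matrix $A_{d1}(D)$ to the single-counting statement for the alternating matrix $A_{a1}(D)$, which is exactly Lemma \ref{lem;arcfixAZRCkerSingle}, by passing through a fixed checkerboard coloring just as was done for the double-counting rule in Lemma \ref{lem;arcfixDZRCker}. The passage from alternating to definite kernel solutions at the single-counting level is provided by Lemma \ref{lem;KerAtoD}; one only has to check that this translation respects the prescribed values $a$ and $b$ on the two regions $R$ and $R'$ flanking the chosen arc $\gamma$.

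Concretely, if $D$ is a link projection I first add arbitrary over/under information so that $D$ becomes a genuine link diagram; the matrix $A_{d1}(D)$ does not depend on this choice. Next I apply Lemma \ref{lem;arcfixAZRCkerSingle} with the pair $(R,R')$ and the two given integers, but I must choose the target values in the alternating problem so that after sign-twisting by the checkerboard coloring they land on $a$ and $b$. The clean way is to fix a checkerboard coloring in which $R'$ (equivalently, one of the two flanking regions) is assigned $0$; since $R$ and $R'$ are adjacent across the single arc $\gamma$, the coloring gives them opposite parities, so $c_R=1$ and $c_{R'}=0$. I then invoke Lemma \ref{lem;arcfixAZRCkerSingle} to obtain a kernel solution $\tilde{\mathbf{u}}$ for $A_{a1}(D)$ whose components on $R$ and $R'$ are $-a$ and $b$ respectively. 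Applying Lemma \ref{lem;KerAtoD} to $\tilde{\mathbf{u}}$ produces a kernel solution $\mathbf{u}$ for $A_{d1}(D)$ whose component on $R$ is $(-1)^{c_R}(-a)=(-1)^1(-a)=a$ and whose component on $R'$ is $(-1)^{c_{R'}}(b)=(-1)^0 b=b$, as desired.

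The only point demanding attention is that Lemma \ref{lem;KerAtoD} as stated in the excerpt converts kernel solutions of the \emph{double} counting rule, whereas here I need the \emph{single} counting version; this is furnished by Lemma \ref{lem;KerAtoDsingle}, whose proof already handles the reducible-crossing subtleties that distinguish the single from the double rule. So the correct citation is Lemma \ref{lem;KerAtoDsingle} rather than Lemma \ref{lem;KerAtoD}, and with that substitution the sign-twist argument goes through verbatim. I expect the main conceptual obstacle to be exactly this bookkeeping of signs and parities: one must verify that the checkerboard parity of the two target regions is indeed opposite (which follows from their being separated by a single crossingless arc, via the Jordan curve observations in Section \ref{sect;pre}), and that pre-twisting the alternating targets by these parities yields precisely $a$ and $b$ after the conversion. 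All the substantive work—existence of the alternating kernel solution with prescribed boundary values and correct handling of reducible crossings—has already been carried out in Lemmas \ref{lem;arcfixAZRCkerSingle} and \ref{lem;KerAtoDsingle}, so this proof is essentially a translation via a well-chosen coloring.
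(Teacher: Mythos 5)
Your proposal is correct and follows essentially the same route as the paper: reduce to Lemma \ref{lem;arcfixAZRCkerSingle} by prescribing sign-twisted target values $(-1)^{c_R}a$, $(-1)^{c_{R'}}b$ on the alternating side (your choice $c_{R'}=0$, $c_R=1$ is just a particular normalization of the paper's arbitrary coloring), and then convert via Lemma \ref{lem;KerAtoDsingle}. Your self-correction from Lemma \ref{lem;KerAtoD} to the single-counting version Lemma \ref{lem;KerAtoDsingle} is exactly the citation the paper uses.
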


\begin{proof}
If $D$ is a link projection, we arbitrarily add over/under information to crossings. 
Then we may assume that $D$ is a link diagram. 
We fix a checkerboard coloring of $D$. 
Let $c_R$ and $c_{R'}$ be the assigned integers to $R$ and $R'$ respectively by the fixed checkerboard coloring. 
By Lemma \ref{lem;arcfixAZRCkerSingle}, 
there exists a kernel solution $\mathbf{w}$ for $A_{a1}(D)$ 
such that the components of $\mathbf{w}$ corresponding to $R$ and $R'$ 
are $(-1)^{c_R}a$ and $(-1)^{c_{R'}}b$ respectively.  
We apply Lemma \ref{lem;KerAtoDsingle} to $\mathbf{w}$. 
Then we obtain a kernel solution $\mathbf{u}$ for $A_{d1}(D)$ 
such that the components of $\mathbf{u}$ corresponding to $R$ and $R'$ are $a$ and $b$ respectively.  
\end{proof}

The following lemma also has been proved by Ahara and Suzuki \cite{aharasuzuki} for knot diagrams. 
They  proved it as a corollary to Lemma \ref{lem;arcfixDZRCkerSingle}. 
We give an alternative proof. 

\begin{lemma}
\label{lem;add1DZRCred}
Let $D$ be a link diagram or projection 
with $d$ connected components and $n$ crossings, $n\geq 1$. 
Let $y$ be a reducible crossing of $D$. 
There exist $\mathbf{v}_y \in \mathbb{Z}^{n+d+1}$ such that 
any components of $A_{d1}(D)\mathbf{v}_y$ are $0$ 
but the component of $A_{d1}(D)\mathbf{v}_y$ to $y$ is $1$. 
\end{lemma}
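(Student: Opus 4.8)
The plan is to mirror the proof of Theorem~\ref{thm;add1DZRC}, but to start from the reducible-crossing \emph{alternating} result and then convert from the single counting rule to the single counting rule through a checkerboard twist. First, if $D$ is a link projection I would add over/under information to the crossings arbitrarily, so that we may assume $D$ is a link diagram; neither $A_{d1}(D)$ nor the conclusion depends on this choice. Applying Lemma~\ref{lem;add1AZRCred} to the reducible crossing $y$, I obtain $\mathbf{w}_y \in \mathbb{Z}^{n+d+1}$ such that every component of $A_{a1}(D)\mathbf{w}_y$ is $0$ except the one corresponding to $y$, which is $1$. For each region $R$ let $w_R$ be the component of $\mathbf{w}_y$ at $R$.

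Next I would fix a checkerboard coloring and set $v_R = (-1)^{c_R} w_R$, exactly as in Lemma~\ref{lem;KerAtoDsingle} and in the proof of Theorem~\ref{thm;add1DZRC}, letting $\mathbf{v}_y$ be the resulting vector. The heart of the argument is the purely local observation that at every crossing $q$ the value produced by the definite single counting rule on $\mathbf{v}_y$ equals, up to an overall sign depending only on the checkerboard color of one reference corner of $q$, the value produced by the alternating single counting rule on $\mathbf{w}_y$. Writing $C_1, C_2, C_3, C_4$ for the corners clockwise around $q$ and $R_1, \dots, R_4$ for the regions containing them, the checkerboard colors alternate, so when $c_{R_1}=0$ one has $c_{R_2}=1$, $c_{R_3}=0$, $c_{R_4}=1$; hence $(-1)^{c_{R_1}}w_{R_1} + (-1)^{c_{R_2}}w_{R_2} + (-1)^{c_{R_3}}w_{R_3} + (-1)^{c_{R_4}}w_{R_4} = w_{R_1} - w_{R_2} + w_{R_3} - w_{R_4}$, which is precisely the alternating single counting value at $q$, while $c_{R_1}=1$ merely flips the overall sign. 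This computation is to be carried out in the same two cases as in Lemma~\ref{lem;KerAtoDsingle}: the irreducible case where $R_1, \dots, R_4$ are distinct, and the reducible case where one opposite pair of regions coincides, so that the repeated region is counted once on both sides and the matching identity becomes $w_{R_1} - w_{R_2} - w_{R_4}$ (or $w_{R_1} - w_{R_2} + w_{R_3}$) on each side.

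Consequently every component of $A_{d1}(D)\mathbf{v}_y$ equals $\pm$ the corresponding component of $A_{a1}(D)\mathbf{w}_y$, so all components vanish except the one at $y$, which is $\pm 1$; choosing the checkerboard coloring so that the reference corner of $y$ has color $0$ (equivalently, negating $\mathbf{v}_y$ if the value comes out $-1$) makes this component equal to $1$, which yields the desired $\mathbf{v}_y$. The step I expect to require the most care is the bookkeeping at the reducible crossing $y$ itself: I must verify that the single counting rules underlying $A_{d1}$ and $A_{a1}$ both count the repeated region exactly once and with matching checkerboard and alternating signs there, so that the converted value at $y$ is genuinely $\pm 1$ rather than being corrupted by the coincidence of two corners. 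This is exactly the reducible-crossing analysis already performed in Lemma~\ref{lem;KerAtoDsingle}, so no new phenomenon arises, but the signs must be tracked explicitly.
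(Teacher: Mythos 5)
Your proposal is correct and follows essentially the same route as the paper: apply Lemma~\ref{lem;add1AZRCred} to get $\mathbf{w}_y$ with $A_{a1}(D)\mathbf{w}_y$ supported at $y$, twist by $(-1)^{c_R}$ for a checkerboard coloring, and invoke the local computation of Lemma~\ref{lem;KerAtoDsingle} (including the reducible case where an opposite pair of regions coincides) to see that each component of $A_{d1}(D)\mathbf{v}_y$ is $\pm$ the corresponding component of $A_{a1}(D)\mathbf{w}_y$. The paper fixes the sign at $y$ by choosing which of the two checkerboard colorings to use, which is the same adjustment as your negation of $\mathbf{v}_y$.
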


\begin{proof}
If $D$ is a link projection, we arbitrarily add over/under information to crossings. 
Then we may assume that $D$ is a link diagram. 
By Lemma \ref{lem;add1AZRCred}, 
there exist $\mathbf{w}_y \in \mathbb{Z}^{n+d+1}$ such that 
any components of $A_{a1}(D)\mathbf{w}_y$ are $0$ 
but the component of $A_{a1}(D)\mathbf{w}_y$ to $y$ is $1$. 
For each region $R$, 
let $w_R$ be the componeto of $\mathbf{w}_y$ to $R$.  
We denote the four corners touching $x$ by $C_1, C_2, C_3, C_4$ clockwise, 
and the regions including $C_j$ by $R_j$, $j=1,2,3,4$. 
We may asuume that $R_3$ coincides with $R_1$.
Then we have $ w_{R_1}-w_{R_2}-w_{R_4}=\pm 1$. 
If  $w_{R_1}-w_{R_2}-w_{R_4}=1$, that is the crossing $y$ is negative, 
we fix the checkerboard coloring such that 
the region $R_1$ is assigned $0$ . 
Otherwise, we fix the checkerboard coloring such that 
the region $R_1$ is assigned $1$ . 
For each region $R$, 
let $c_R$ be the integer assigned by the fixed checkerboard coloring. 
Let $\mathbf{v}_y$ be the vector in $\mathbb{Z}^{n+d+1}$ 
such that 
the component to $R$ is $\displaystyle (-1)^{c_R}w_R$. 
By similar argument to the proof of Lemma \ref{lem;KerAtoDsingle}, 
it is shown that 
any components of $A_{d1}(D)\mathbf{v}_y$ are $0$ 
but the component of $A_{d1}(D)\mathbf{v}_y$ to $y$ is $1$. 
\end{proof}

Combining Lemma \ref{lem;add1DZRCred} with 
Theorem \ref{thm;aharasuzuki;mat} (2), 
we obtain the proof of 
Theorem \ref{thm;aharasuzuki;mat} (1), that is 
the existence of a solution of 
a definite  integral region choice problem of the single counting rule 
for a knot diagram, 
by the same argument as that 
due to Ahara and Suzuki \cite{aharasuzuki}. 

\begin{proof}[Proof of Theorem \ref{thm;aharasuzuki;mat} (1)]
If the given knot diagram or projection $D$ is irreducible, 
then we have $A_{d1}(D)=A_{d2}(D)$ and 
a solution of the double counting rule, which has been obtained,  is also 
a solution of the single counting rule. 

We suppose that the knot diagram or projection $D$ has at least one reducible crossing. 
For a region $R_j$, 
let $\mathcal{X}_j$ be the set of reducible crossings touched by $R_j$ twice, $j=1, \cdots , n+2$. 
A set $\mathcal{X}_j$ might be empty. 
Applying Lemma \ref{lem;add1DZRCred} 
for each reducible crossing $y\in \mathcal{X}_j$, 
we obtain $\mathbf{v}_y \in \mathbb{Z}^{n+2}$ such that 
any components of $A_{d1}(D)\mathbf{v}_y$ are $0$ 
but the component of $A_{d1}(D)\mathbf{v}_y$ to $y$ is $1$. 
We take $\mathbf{r}_j\in \mathbb{Z}^{n+2}$ such that 
any components of $\mathbf{r}_j$ are $0$ 
but the $j$-th component is $1$. 
Choosing $R_j$ once corresponds with $\mathbf{r}_j$. 
By the definitions of the definite region choice matrices, 
we have 
\[
 A_{d2}(D)\mathbf{r}_j - A_{d1}(D)\mathbf{r}_j =\sum _{y\in \mathcal{X}_j} A_{d1}(D)\mathbf{v}_y.
\] 

Applying Theorem \ref{thm;aharasuzuki;mat}  (2), which has been proved, 
we obtain a solution of double counting rule, 
$\mathbf{w} \in \mathbb{Z}^{n+2}$ with $A_{d2}(D)\mathbf{w}+\mathbf{c}=\mathbf{0}$. 
Let $w_j$ be the $j$-th component of $\mathbf{w}$, $j=1, \cdots , n+2$. 
We note $\displaystyle \mathbf{w}=\sum _j w_j\mathbf{r}_j$. 
We take $\displaystyle \mathbf{u}=\mathbf{w}+\sum _j w_j \sum _{y\in \mathcal{X}_j} \mathbf{v}_y$. 
Then we have 
\begin{eqnarray*}
A_{d1}(D)\mathbf{u}
&=& A_{d1}(D)\mathbf{w}+\sum _j w_j \sum _{y\in \mathcal{X}_j} A_{d1}(D)\mathbf{v}_y \\
&=& \sum _j w_j \left( A_{d1}(D)\mathbf{r}_j + \sum _{y\in \mathcal{X}_j} A_{d1}(D)\mathbf{v}_y \right) \\
&=& \sum _j w_j A_{d2}(D)\mathbf{r}_j \\
&=& A_{d2}(D)\mathbf{w} \\
&=& -\mathbf{c}. 
\end{eqnarray*}
Therefore we have $A_{d1}(D)\mathbf{u} +\mathbf{c}=\mathbf{0}$. 
\end{proof}

\begin{remark}
In Section \ref{sect;RCmatrixS}, 
we prove that 
the first and second results of Theorem \ref{thm;aharasuzuki;mat} are equivalent. 
\end{remark}

The proof of Lemma \ref{lem;add1DZRCred} for knot diagrams due to Ahara and Suzuki \cite{aharasuzuki} 
implies the following fact. 
We give an alternative proof. 

\begin{lemma}\label{lem;arcfixadd1DZRCred}
Let $D$ be a link diagram or projection with $d$ connected components and $n$ crossings, $n\geq 1$. 
Let $y$ be a reducible crossing of $D$. 
We fix an arc $\gamma$ in the link diagram $D$,
and let $R$ and $R'$ be two regions which are the both sides of the arc $\gamma$. 
We take two arbitrary integers $a$ and $b$. 
There exist $\mathbf{v}_y \in \mathbb{Z}^{n+d+1}$ such that 
any components of $A_{d1}(D)\mathbf{v}_y$ are $0$ 
but the component of $A_{d1}(D)\mathbf{v}_y$ to $y$ is $1$,  
and such that the components of $\mathbf{v}_y$ corresponding to $R$ and $R'$ are $a$ and $b$ respectively.  
\end{lemma}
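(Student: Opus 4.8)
The plan is to exploit the linearity of $A_{d1}(D)$ and decompose the desired vector as the sum of a particular solution producing the correct image at $y$ and a kernel solution that corrects the prescribed values at the two regions $R$ and $R'$. This mirrors the proof of the alternating analogue Lemma~\ref{lem;arcfixadd1AZRCred}, replacing each alternating ingredient by its definite counterpart.

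First I would invoke Lemma~\ref{lem;add1DZRCred} to obtain a vector $\mathbf{v}'_y \in \mathbb{Z}^{n+d+1}$ with the property that every component of $A_{d1}(D)\mathbf{v}'_y$ vanishes except the one indexed by $y$, which equals $1$. This settles the image requirement but gives no control over the entries of $\mathbf{v}'_y$ at $R$ and $R'$; I would simply record these entries, calling them $a'$ and $b'$.

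Next I would apply Lemma~\ref{lem;arcfixDZRCkerSingle} to the same arc $\gamma$ with bounding regions $R, R'$, prescribing the target values $a-a'$ and $b-b'$. This yields a kernel solution $\mathbf{u}$ for $A_{d1}(D)$, so that $A_{d1}(D)\mathbf{u}=\mathbf{0}$, whose entries at $R$ and $R'$ are exactly $a-a'$ and $b-b'$. Setting $\mathbf{v}_y=\mathbf{u}+\mathbf{v}'_y$, linearity gives $A_{d1}(D)\mathbf{v}_y = A_{d1}(D)\mathbf{u}+A_{d1}(D)\mathbf{v}'_y = A_{d1}(D)\mathbf{v}'_y$, so the image condition at $y$ is preserved, while the entries at $R$ and $R'$ become $(a-a')+a'=a$ and $(b-b')+b'=b$, as required.

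There is essentially no serious obstacle here, since the entire content of the statement has been absorbed into the two earlier lemmas. The only point to verify is that Lemma~\ref{lem;arcfixDZRCkerSingle} indeed permits arbitrary prescribed integers on the two sides of a fixed arc, which is precisely its conclusion, so that the correction $\mathbf{u}$ can cancel whatever values $a'$ and $b'$ happen to arise from the particular solution $\mathbf{v}'_y$.
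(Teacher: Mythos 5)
Your proposal is correct and coincides with the paper's own proof: both obtain $\mathbf{v}'_y$ from Lemma~\ref{lem;add1DZRCred}, correct its values at $R$ and $R'$ by a kernel solution from Lemma~\ref{lem;arcfixDZRCkerSingle} prescribing $a-a'$ and $b-b'$, and set $\mathbf{v}_y=\mathbf{u}+\mathbf{v}'_y$. No gaps.
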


\begin{proof}
Applying Lemma \ref{lem;add1DZRCred}, 
we obtain $\mathbf{v}'_y \in \mathbb{Z}^{n+d+1}$ such that 
any components of $A_{d1}(D)\mathbf{v}'_y$ are $0$ 
but the component of $A_{d1}(D)\mathbf{v}'_y$ to $y$ is $1$. 
We denote 
the components of $\mathbf{v}'_y$ corresponding to $R$ and $R'$ 
by $a'$ and $b'$ respectively. 
Applying  Lemma \ref{lem;arcfixDZRCkerSingle}, 
there exists a kernel solution $\mathbf{u}$ for $A_{d1}(D)$ 
such that the components of $\mathbf{u}$ corresponding to $R$ and $R'$ 
are $a-a'$ and $b-b'$ respectively.  
Let $\mathbf{v}_y=\mathbf{u}+\mathbf{v}'_y$. 
Then we obtain the desired $\mathbf{v}_y$.
\end{proof}

We note that 
we use Lemma \ref{lem;add1DZRCred} but does not use Lemma \ref{lem;arcfixadd1DZRCred} 
to prove  Theorem \ref{thm;aharasuzuki;mat} (1) in this article.

\section{Region choice matrices of the double counting rule}\label{sect;RCmatrix}


From now on, 
we change link projections to link diagrams 
adding over/under information to crossings arbitrarily. 

Applying the arguments in the original proofs of Theorem \ref{thm;aharasuzuki;mat}  and \ref{thm;harada;mat} 
in \cite{aharasuzuki, haradaM} to link diagrams,  
the ranks of the definite and alternating region choice matrices are determined. 
In this section, we show that on the double counting rule. 

\begin{theorem}\label{thm;rank}
Let $D$ be a diagram of an $l$-component link. 
We assume that $D$ has $d$ connected components and $n$ crossings, 
$n\geq 1$.  
Then each rank of the definite and alternating region choice matrices of the double counting rule, 
$A_{d2}(D)$ and $A_{a2}(D)$, 
is $n+d-l$,  
and each 
rank of the $\mathbb{Z}$-submodules 
$\{ \mathbf{u} \in \mathbb{Z}^{n+d+1} \mid A_{d2}(D) \mathbf{u}=\mathbf{0} \}$
and 
$\{ \mathbf{u} \in \mathbb{Z}^{n+d+1} \mid A_{a2}(D) \mathbf{u}=\mathbf{0} \}$
is $l+1$.
\end{theorem}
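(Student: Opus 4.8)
The plan is to compute the two quantities—the rank of the $n \times (n+d+1)$ matrix $A_{a2}(D)$ (equivalently $A_{d2}(D)$) and the rank of its kernel $\mathbb{Z}$-submodule—and to check they are consistent via rank-nullity, namely that $\operatorname{rank}(A_{a2}(D)) + \operatorname{rank}(\ker) = n+d+1$, i.e. $(n+d-l) + (l+1) = n+d+1$. The strategy splits naturally into producing a lower bound on the kernel rank by exhibiting that many independent kernel vectors, and an upper bound coming from the image having rank at least $n+d-l$; since I may invoke Theorem \ref{thm;add1AZRC} (and its definite analogue) I already know the image of $A_{a2}(D)$ contains every standard basis vector $\mathbf{e}_i$ of $\mathbb{Z}^n$ for which $x_i$ is a crossing joining two arcs of the \emph{same} link component. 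Counting such crossings will be the bridge between the two bounds. First I would reduce the definite case to the alternating case: by Lemma \ref{lem;KerAtoD}, \ref{lem;KerAtoDsingle} and \ref{lem;KerDtoA}, multiplying components by $(-1)^{c_R}$ for a fixed checkerboard coloring gives a bijective (indeed $\mathbb{Z}$-linear and invertible) correspondence between kernel solutions for $A_{d2}(D)$ and for $A_{a2}(D)$, so the two kernels have equal rank, and since the matrices have the same size the two ranks of the matrices agree as well. Thus it suffices to treat $A_{a2}(D)$.

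Next I would establish the lower bound on the kernel rank. Lemma \ref{lem;compAlex} supplies, for each of the $l$ link components $D_i$, a componentwise Alexander numbering $\{u_R\}$ which is a kernel vector; together with the all-ones vector (the constant Alexander numbering, i.e. shifting every region index by $1$, which lies in the kernel since each row of $A_{a2}(D)$ sums to $1-1+1-1=0$, or $2\cdot 1 - 2\cdot 1 = 0$ on a doubly-touched corner) this yields $l+1$ candidate kernel vectors. I would then argue these $l+1$ vectors are $\mathbb{Z}$-linearly independent: the all-ones vector is constant while each componentwise Alexander numbering genuinely varies across an arc of $D_i$, and the variations are supported on disjoint "directions," so no nontrivial $\mathbb{Z}$-combination can vanish on all regions. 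This gives $\operatorname{rank}(\ker) \geq l+1$.

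For the matching upper bound I would bound the image from below. Let $m$ be the number of crossings of $D$ at which two arcs of the \emph{same} link component meet; by Theorem \ref{thm;add1AZRC} the image of $A_{a2}(D)$ contains $\mathbf{e}_i$ for each such crossing, so $\operatorname{rank}(A_{a2}(D)) \geq m$. The key combinatorial identity I expect to need is $m = n - (\text{number of inter-component crossings}) = n + d - l$; this should follow from a connectivity/Euler count on the link components, relating $d$ (connected components of the projection), $l$ (link components), and the crossings that glue distinct components together—essentially, $l-d$ counts the "extra" components beyond what inter-component crossings fuse. Combined with $\operatorname{rank}(\ker)\geq l+1$ and rank-nullity $\operatorname{rank}(A_{a2}(D)) + \operatorname{rank}(\ker) = n+d+1$, the two inequalities $\operatorname{rank}(A_{a2}(D)) \geq n+d-l$ and $\operatorname{rank}(\ker)\geq l+1$ must both be equalities, pinning down all four quantities. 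The main obstacle I anticipate is the combinatorial identity $m = n+d-l$: proving it carefully requires tracking how the $d$ connected components of the diagram decompose into the $l$ link components and how the inter-component crossings contribute, and making sure the counting is correct when several link components share one connected diagram component. Once that identity is in hand the rest is a clean dimension count.
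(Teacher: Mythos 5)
Your overall strategy (exhibit $l+1$ independent kernel vectors for the lower bound, bound the image rank from below, and close the gap with rank--nullity) is genuinely different from the paper's proof, which instead shows that crossing changes and Reidemeister moves I--III preserve the rank of the kernel submodule (Lemmas \ref{lem;ccAZRCmat}--\ref{lem;R3mat}) and then computes directly on the normal form of Example \ref{ex;n1}. The kernel lower bound half of your plan is sound and is essentially what the paper later does in Theorem \ref{thm;KerBasis}. However, the image half contains a genuine gap.

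The ``key combinatorial identity'' $m=n+d-l$, where $m$ is the number of self-crossings, is false. Two distinct link components that cross each other in a diagram must cross an even number of times (Jordan curve theorem), so the number of inter-component crossings satisfies $n-m\geq 2(l-d)$, not $n-m=l-d$; whenever $l>d$ one gets $m\leq n-2(l-d)<n+d-l$. Concretely, for the standard diagram of the Hopf link, $n=2$, $d=1$, $l=2$, and $m=0$, while $n+d-l=1$; for the canonical diagram of the $(2,4)$-torus link discussed in Section \ref{sect;image}, $m=0$ while the rank is $n+d-l=3$. Thus the standard basis vectors supplied by Theorem \ref{thm;add1AZRC} at self-crossings cannot by themselves generate a submodule of rank $n+d-l$, and your two inequalities do not meet. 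To repair the argument you would need additional image elements supported on \emph{pairs} of inter-component crossings, as in Lemma \ref{lem;Im2compAZRC} and Corollary \ref{cor;Im2compAZRC}; but the paper establishes those only for two-component links and only after Theorem \ref{thm;rank}, so this is a substantive missing ingredient rather than a routine verification.
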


If we transpose the incidence matrix 
induced by Cheng and Gao \cite{chenggao} and Hashizume \cite{hashizume2013}, 
it is same as the definite region choice matrix of the single counting rule 
up to permutations of rows and columns. 
This transposed matrix also coincides with the alternating region choice matrix of the single counting rule 
modulo $2$ up to permutations of rows and columns. 
For irreducible diagrams, 
Theorem \ref{thm;rank} is an extension of their result on the rank of the incidence matrices. 

It is well known that  
we can make any link diagram into a diagram of a trivial link after some crossing changes, 
and that 
some Reidemeister moves can transform any pair of non-trivial diagrams of a trivial link each other. 
We prove Theorem \ref{thm;rank} 
using a similar argument to that in Appendix A 
of the article written by Ahara and Suzuki \cite{aharasuzuki}.  
That means it is similar to the proof of 
the invariance for the Alexander polynomial in \cite{alexander}. 

\begin{lemma}
\label{lem;ccAZRCmat}
If we change a crossing of a link diagram 
admitting integral region choices,  
the changed diagram also admit it,  
the definite region choice matrix of double counting rule is preserved, 
and  
the row concerning this crossing is multiplied by $-1$ 
for the alternating region choice matrix of the double counting rule. 
\end{lemma}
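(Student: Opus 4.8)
The plan is to fix the crossing in question, say $x_i$, and to compare $D$ with the diagram $D'$ obtained from $D$ by a crossing change at $x_i$. The starting observation is that a crossing change is supported in a small neighbourhood of $x_i$ and merely interchanges the overpass and the underpass there: it leaves the underlying projection, all the regions $R_1,\dots ,R_{n+d+1}$, and the over/under information at every crossing $x_k$ with $k\neq i$ untouched. I would then treat the two matrices separately and deduce the admissibility statement at the end.

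For the definite matrix the argument is immediate. By definition the entry $a^{(d2)}_{kj}$ records only whether, and how many times, the region $R_j$ touches the crossing $x_k$, which is data of the projection alone and is independent of the over/under information. Since neither the regions nor their incidences with $x_i$ change under a crossing change, every entry of $A_{d2}(D')$ equals the corresponding entry of $A_{d2}(D)$, so $A_{d2}(D')=A_{d2}(D)$.

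For the alternating matrix I would argue row by row. For $k\neq i$ the entries $a^{(a2)}_{kj}$ are determined by the orientations and the over/under datum at $x_k$, all of which are preserved, so the $k$-th row of $A_{a2}(D')$ coincides with that of $A_{a2}(D)$. The essential point, which I expect to be the main obstacle, is the local computation for the $i$-th row. A crossing change fixes the orientations but interchanges the overpass and the underpass at $x_i$; consequently a corner that previously lay counterclockwise from the underpass to the overpass now lies counterclockwise from the overpass to the underpass, that is, clockwise from the underpass to the overpass. Reading off the local pictures collected in Table 1, this interchanges the corner types $\diaPosTouchL,\diaPosTouchR$ (value $+1$) with $\diaNegTouchB,\diaNegTouchT$ (value $-1$), and likewise interchanges the doubly touched types $\diaPosTouchLR$ (value $+2$) and $\diaNegTouchTB$ (value $-2$). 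In every case $a^{(a2)}_{ij}\mapsto -a^{(a2)}_{ij}$, so the $i$-th row of $A_{a2}(D')$ is exactly $-1$ times that of $A_{a2}(D)$. I would settle this by the explicit four-corner inspection just described rather than by any global device, since this sign bookkeeping is the only nontrivial step.

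Finally I would deduce that $D'$ admits integral region choices. For the definite problem this is immediate from $A_{d2}(D')=A_{d2}(D)$. For the alternating problem, write $E_i\in\mathrm{GL}_n(\mathbb{Z})$ for the diagonal matrix with $-1$ in the $(i,i)$ entry and $1$ elsewhere; the two steps above give $A_{a2}(D')=E_i\,A_{a2}(D)$. Since $E_i$ is invertible over $\mathbb{Z}$ (indeed $E_i^{-1}=E_i$), for any $\mathbf{c}\in\mathbb{Z}^n$ the equation $A_{a2}(D')\mathbf{u}+\mathbf{c}=\mathbf{0}$ is equivalent to $A_{a2}(D)\mathbf{u}+E_i^{-1}\mathbf{c}=\mathbf{0}$, and $E_i^{-1}\mathbf{c}$ ranges over all of $\mathbb{Z}^n$ as $\mathbf{c}$ does. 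Hence solvability for every target vector is preserved, and $D'$ admits integral region choices precisely when $D$ does.
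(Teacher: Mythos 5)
Your proposal is correct and follows essentially the same route as the paper, which simply states that the lemma ``is cleared by the definitions of region choice matrices''; you have merely written out in full the definitional check (projection data unchanged for $A_{d2}$, overpass/underpass swap exchanging the $+$-corners with the $-$-corners in row $i$ of $A_{a2}$) that the paper leaves implicit. The four-corner sign inspection and the invertible diagonal matrix $E_i$ are both sound.
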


\begin{proof}
It is cleared by the definitions of region choice matrices.  
\end{proof}

\begin{remark}
In \cite{haradaM}, 
Harada proved that 
crossings changes preserve kernel solutions for 
alternating region choice matrices of the double counting rule 
on any knot diagrams.
Lemma \ref{lem;ccAZRCmat} implies that 
his claim holds on any link diagrams.  
\end{remark}

\begin{lemma}
\label{lem;R1mat}
A Reidemeister move I between link diagrams  admitting integral region choices
preserves 
the rank of the $\mathbb{Z}$-submodules of 
kernel solutions for 
definite and alternating region choice matrices of the double counting rule. 
\end{lemma}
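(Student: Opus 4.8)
My plan is to reduce the statement to the single fact that a Reidemeister I move raises the rank of the double-counting matrix by exactly one; preservation of the kernel rank then follows from rank--nullity, since the same move adds exactly one region (hence one column).

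First I would fix notation for the local change. Let $D'$ be obtained from $D$ by inserting a kink on an arc $\gamma$ of $D$, flanked by regions $U$ and $V$. A kink leaves the number $d$ of connected components unchanged and, by Lemma \ref{lem;Euler}, adds exactly one crossing $x$ and one region; call the new region $L$. The geometric heart of the matter is that $L$ is the \emph{monogon} cut off by the loop: its entire boundary is the single loop arc, which returns to $x$ without meeting any other crossing, so $L$ touches $x$ at exactly one corner and touches no other crossing. The remaining two corners of $x$ belong to $U$ (into which the loop is pushed, and which therefore meets $x$ at its two opposite side corners) and to $V$. Crucially, $U$ and $V$ are regions already present in $D$, and because the move is supported in a disk meeting $D$ only along $\gamma$, every old crossing keeps exactly its old incidences with every old region.

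Next I would read off the block form. Ordering the columns of $A_{a2}(D')$ with the old regions first (in the order used for $A_{a2}(D)$) and $L$ last, and the rows with the old crossings first and $x$ last, gives
\[
A_{a2}(D') = \left(\begin{array}{c|c} A_{a2}(D) & \mathbf{0} \\ \hline \mathbf{b}^{T} & \varepsilon \end{array}\right), \qquad \varepsilon := a^{(a2)}_{x,L} = \pm 1,
\]
where the upper-right zero column records that no old crossing touches $L$, and the last row $(\mathbf{b}^{T}\mid\varepsilon)$ is the incidence of $x$, nonzero only at $U$ (entry $\pm 2$), at $V$ (entry $\pm 1$), and at $L$ (entry $\varepsilon$). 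The only feature I really need is that the new column has its sole nonzero entry, a unit, in the new row.

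Then the rank count is immediate: each of the first $n$ rows vanishes in the last coordinate while the last row has $\varepsilon=\pm 1$ there, so the last row is not a rational combination of the others and $\operatorname{rank} A_{a2}(D') = \operatorname{rank} A_{a2}(D) + 1$. Since $A_{a2}(D')$ has one more column than $A_{a2}(D)$, rank--nullity over $\mathbb{Q}$ yields
\[
\operatorname{rank}\{\mathbf{u}\mid A_{a2}(D')\mathbf{u}=\mathbf{0}\} = (n+d+2) - \bigl(\operatorname{rank} A_{a2}(D)+1\bigr) = \operatorname{rank}\{\mathbf{u}\mid A_{a2}(D)\mathbf{u}=\mathbf{0}\}.
\]
The definite case is identical, the only entries of the new row again being confined to the columns $U$, $V$, $L$ with a unit $\varepsilon=1$ at $L$, so the same block form and count apply to $A_{d2}$; reversing the move merely swaps the roles of $D$ and $D'$. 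The one delicate step, which I would justify by drawing the standard local picture of the kink, is precisely the geometric claim that $L$ is a monogon meeting $x$ exactly once and meeting no other crossing, so that the new column is a unit vector supported in the new row. Once this is secured, everything else is routine linear algebra, uniform across the two counting rules.
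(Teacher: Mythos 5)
Your proposal is correct and follows essentially the same route as the paper: both identify the block form in which the new (monogon) region contributes a column whose only nonzero entry is a unit $\pm1$ in the row of the new crossing, conclude that the rank of $A_{d2}$ and $A_{a2}$ each increase by exactly one while the number of columns also increases by one, and deduce preservation of the kernel rank. The paper simply writes out the explicit entries for $D_+$ and $D_-$ where you abstract to the one feature that matters, which is a cosmetic rather than substantive difference.
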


\begin{figure}[htbp]
\begin{center}
\includegraphics[height=1cm,clip]{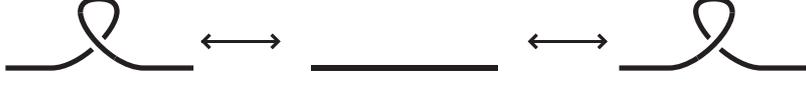}
\end{center}
\caption{Reidemeister moves I among the diagrams $D_+$, $D$, and $D_-$.}
\label{Fig;R1}
\end{figure}

\begin{proof}
(cf. \cite{aharasuzuki})
Let the middle of Figure \ref{Fig;R1} be an arc of a link diagram $D$ admitting an integral region choice, 
and we denote by $D_+$ and $D_-$ the obtained diagrams from $D$ by a Reidemeister move I at the arc 
as illustrated on the right side and the left side of Figure \ref{Fig;R1} respectively. 
We may order the regions of $D$ 
such that the upper and lower regions on the middle of Figure \ref{Fig;R1}  are ordered 1 and 2 respectively. 
We denote the definite and the alternating region choice matrices of the double counting rule for $D$ 
by   
\[
A_{d2}(D)=
\begin{pmatrix}
\mathbf{a}_d & \mathbf{b}_d & P_d 
\end{pmatrix}
, \ \ \ 
A_{a2}(D)=
\begin{pmatrix}
\mathbf{a}_a & \mathbf{b}_a & P_a 
\end{pmatrix}
. 
\]
Inserting the row and the column corresponding to the added crossing and the added region respectively, 
we obtain the matrices for $D_+$ and $D_-$,   
\[
A_{d2}(D_+)=A_{d2}(D_-)=
\begin{pmatrix}
1 & 2 & 1 & \mathbf{0} \\
\mathbf{0} & \mathbf{a}_d & \mathbf{b}_d & P_d 
\end{pmatrix}
, 
\]
\[
A_{a2}(D_+)=
\begin{pmatrix}
1 & -2 & 1 & \mathbf{0} \\
\mathbf{0} & \mathbf{a}_a & \mathbf{b}_a & P_a 
\end{pmatrix}
, \ \ \ 
A_{a2}(D_-)=
\begin{pmatrix}
-1 & 2 & -1 & \mathbf{0} \\
\mathbf{0} & \mathbf{a}_a & \mathbf{b}_a & P_a 
\end{pmatrix}
. 
\]
Each of the matrices for $D_+$ and $D_-$ has one more column than $D$, 
and 
the equalities $\mathrm{rank}A_{d2}(D_{\pm})=\mathrm{rank}A_{d2}(D)+1$ 
and $\mathrm{rank}A_{a2}(D_{\pm})=\mathrm{rank}A_{a2}(D)+1$ hold. 
Then the $\mathbb{Z}$-submodules 
of the kernel solutions for these matrices have same rank. 
\end{proof}

\begin{lemma}
\label{lem;R2mat}
A Reidemeister move II between link diagrams  admitting integral region choices 
preserves the rank 
of the $\mathbb{Z}$-submodules 
of kernel solutions for 
definite and alternating region choice matrices of the double counting rule. 
\end{lemma}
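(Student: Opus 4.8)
The plan is to follow the pattern of the proof of Lemma \ref{lem;R1mat}, recording how the region choice matrices of the double counting rule change under the local move and then comparing the ranks of the $\mathbb{Z}$-submodules of kernel solutions. I prefer, however, to argue directly with kernel solutions rather than with the matrix ranks, since this avoids bookkeeping of how the incidences of the old regions are redistributed.

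First I would fix the local picture of a Reidemeister move II, in which two arcs with no crossing are replaced by two arcs meeting at two new crossings $x_1$ and $x_2$ and bounding a bigon $N$; write $A$ and $B$ for the two regions on the two sides of both strands. By Lemma \ref{lem;Euler} there are exactly two scenarios. If the two arcs lie in the same connected component, the number of connected components is unchanged, so the move creates two new regions: the bigon $N$, together with a splitting of the middle region $M$ of the earlier diagram into a region $M_t$ above and a region $M_b$ below. If the two arcs lie in different connected components, the number of connected components drops by one, so the move creates only the bigon $N$, and $M$ is left intact. Let $D$ and $D'$ denote the diagrams before and after the move.

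Next I would describe the kernel solutions of $A_{a2}(D')$ explicitly. For an assignment of integers to the regions of $D'$, the equations coming from $x_1$ and $x_2$ read, after dividing each by its overall sign and using the corner pattern underlying Lemma \ref{lem;AlexindKer},
\[
u_{M_t}+u_N=u_A+u_B, \qquad u_{M_b}+u_N=u_A+u_B,
\]
where in the non-splitting scenario $M_t=M_b=M$ and the two equations coincide. Subtracting them forces $u_{M_t}=u_{M_b}$, so the two pieces of $M$ necessarily carry a common value $u_M$, and then $u_N=u_A+u_B-u_M$ is determined. Since $u_{M_t}=u_{M_b}=u_M$, every old crossing contributes to the equations of $A_{a2}(D')$ exactly as it contributed to those of $A_{a2}(D)$ with $M$ carrying the value $u_M$; hence the remaining equations of $A_{a2}(D')$ are precisely the equations of $A_{a2}(D)$. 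Therefore the restriction map sending a kernel solution of $A_{a2}(D')$ to its values on the regions of $D$ (with $u_M:=u_{M_t}=u_{M_b}$) is a $\mathbb{Z}$-linear isomorphism onto the submodule of kernel solutions of $A_{a2}(D)$, with inverse the extension $u_{M_t}=u_{M_b}=u_M$, $u_N=u_A+u_B-u_M$. The identical computation, with every sign replaced by $+1$, applies to $A_{d2}(D')$ and $A_{d2}(D)$, so the same conclusion holds for the definite matrices.

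This exhibits an isomorphism of the $\mathbb{Z}$-submodules of kernel solutions of $D$ and $D'$ for each of the definite and alternating double counting rules, and in particular equal ranks, as claimed. I expect the step requiring the most care to be the splitting scenario: there the column of $M$ is redistributed among the old crossings between the two new columns $M_t$ and $M_b$, so at the level of matrices the rank computation is delicate, whereas at the level of kernel solutions the redistribution is invisible precisely because the equations from $x_1$ and $x_2$ force $u_{M_t}=u_{M_b}$. It remains only to note that the argument is unaffected by coincidences among $A$, $B$, $M_t$, $M_b$, for instance when one of the new crossings is reducible: such a coincidence merely replaces an entry $\pm 1$ by $\pm 2$ in the corresponding equation and leaves the forced relations $u_{M_t}=u_{M_b}$ and $u_N=u_A+u_B-u_M$ intact.
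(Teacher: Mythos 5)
Your argument is correct, and it reaches the conclusion by a genuinely different route from the paper. The paper proves this lemma by writing out $A_{d2}(D')$ and $A_{a2}(D')$ explicitly in three separate configurations (all local regions distinct; top and bottom regions coinciding; the component-merging case) and performing column operations to show that the matrix rank grows by exactly the number of added columns minus the number of added regions that are ``redundant,'' i.e. by $2$ or $1$, so that the kernel rank is unchanged. You instead work entirely at the level of kernel solutions: the two new crossing equations force the two pieces of the split middle region to carry a common value and then determine the value on the bigon, while every old equation is reproduced verbatim under the identification $u_{M_t}=u_{M_b}=u_M$ because the split column satisfies $\mathbf{b}'+\mathbf{b}''=\mathbf{b}$ and the bigon column vanishes on the old rows; restriction and extension are then mutually inverse $\mathbb{Z}$-linear maps between the two kernels. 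This is cleaner in that it collapses the paper's case analysis (the coincidence $A=B$ only changes an entry from $\pm1$ to $\pm2$ and leaves the forced relations intact, and the remaining conceivable coincidences are excluded by the Jordan curve observation in Section \ref{sect;pre}), and it yields an explicit kernel isomorphism rather than just an equality of ranks. What the paper's computation buys in exchange is the statement $\operatorname{rank}A_{\ast 2}(D')=\operatorname{rank}A_{\ast 2}(D)+2$ (resp. $+1$) about the image; your version recovers this only indirectly, via rank--nullity over $\mathbb{Q}$ together with the count of added columns from Lemma \ref{lem;Euler}, but that is all that is needed for the lemma as stated and for its use in Theorem \ref{thm;rank}.
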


\begin{figure}[htbp]
\begin{center}
\includegraphics[height=1.5cm,clip]{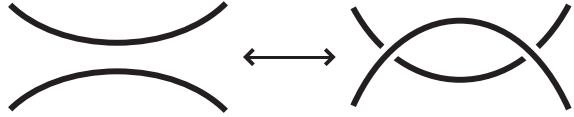}
\end{center}
\caption{A Reidemeister move II between the diagrams $D$ and $D'$.}
\label{Fig;R2}
\end{figure}

\begin{proof}
(cf. \cite{aharasuzuki})
Let the left side of Figure \ref{Fig;R2} be two arcs of a link diagram $D$ admitting an integral region choice, 
and we denote by $D'$ 
the obtained diagram from $D$ by a Reidemeister move II around the arcs 
as illustrated on the right side of Figure \ref{Fig;R2}. 

If the Reidemeister move II does not change the number of the connected components of the diagram, 
and the regions appearing around the move are different each other, 
then we may order the regions of $D$ 
such that the bottom, middle and top regions of the left side of Figure \ref{Fig;R2} are ordered 1, 2 and 3 respectively. 
We denote the definite and the alternating region choice matrices of the double counting rule for $D$ 
by   
\[
A_{d2}(D)=
\begin{pmatrix}
\mathbf{a}_d & \mathbf{b}_d & \mathbf{c}_d & P_d 
\end{pmatrix}
, \ \ \ 
A_{a2}(D)=
\begin{pmatrix}
\mathbf{a}_a & \mathbf{b}_a & \mathbf{c}_a & P_a 
\end{pmatrix}
.
\]
Inserting the rows and the column corresponding to the added crossings and the added region respectively, 
we obtain the matrices for $D'$,   
\[
A_{d2}(D')=
\begin{pmatrix}
1 & 1 & 1 & 0 & 1 & O \\
1 & 1 & 0 & 1 & 1 & \  \\
\mathbf{0} & \mathbf{a}_d & \mathbf{b}_d' & \mathbf{b}_d'' & \mathbf{c}_d & P_d 
\end{pmatrix}
\]
where $\mathbf{b}_d'+\mathbf{b}_d''=\mathbf{b}_d$, and  
\[
A_{a2}(D')=
\begin{pmatrix}
1 & -1 & 1 & 0 & -1 & O \\
-1 & 1 & 0 & -1 & 1 & \  \\
\mathbf{0} & \mathbf{a}_a & \mathbf{b}_a' & \mathbf{b}_a'' & \mathbf{c}_a & P_a 
\end{pmatrix}
\]
where $\mathbf{b}_a'+\mathbf{b}_a''=\mathbf{b}_a$. 
Each of the matrices for $D'$ has two more columns than $D$. 
Adding the fourth column to the third column, 
and taking the first column off the second, third and fifth columns on $A_{d2}(D')$, 
we obtain the matrix
\[
\begin{pmatrix}
1 & 0 & 0 & 0 & 0 & O \\
1 & 0 & 0 & 1 & 0 & \  \\
\mathbf{0} & \mathbf{a}_d & \mathbf{b}_d & \mathbf{b}_d'' & \mathbf{c}_d & P_d 
\end{pmatrix}
, 
\]
and the equality  $\mathrm{rank}A_{d2}(D')=\mathrm{rank}A_{d2}(D)+2$.  
Adding the fourth column to the third column, and the first column to the second and fifth columns, 
and taking the first column off the third column on $A_{a2}(D')$, 
we obtain the matrix
\[
\begin{pmatrix}
1 & 0 & 0 & 0 & 0 & O \\
-1 & 0 & 0 & -1 & 0 & \  \\
\mathbf{0} & \mathbf{a}_a & \mathbf{b}_a & \mathbf{b}_a'' & \mathbf{c}_a & P_a 
\end{pmatrix}
, 
\]
and the equality  
 $\mathrm{rank}A_{a2}(D')=\mathrm{rank}A_{a2}(D)+2$. 
Then the $\mathbb{Z}$-submodules 
of the kernel solutions for these matrices have same rank.  

If the Reidemeister move II does not change the number of the connected components of the diagram, 
and the top and bottom regions  coincide,  
then we may order the regions of $D$ 
such that the upper and middle regions of the left side of Figure \ref{Fig;R2} are ordered 1,and 2 respectively. 
We denote the definite and the alternating region choice matrices of the double counting rule for $D$ 
by   
\[
A_{d2}(D)=
\begin{pmatrix}
\mathbf{a}_d & \mathbf{b}_d & P_d 
\end{pmatrix}
, \ \ \ 
A_{a2}(D)=
\begin{pmatrix}
\mathbf{a}_a & \mathbf{b}_a & P_a 
\end{pmatrix}
. 
\]
Inserting the rows and the column corresponding to the added crossings and the added region respectively, 
we obtain the matrices for $D'$,   
\[
A_{d2}(D')=
\begin{pmatrix}
1 & 2 & 1 & 0 & O \\
1 & 2 & 0 & 1 & \  \\
\mathbf{0} & \mathbf{a}_d & \mathbf{b}_d' & \mathbf{b}_d'' & P_d 
\end{pmatrix}
\]
where $\mathbf{b}_d'+\mathbf{b}_d''=\mathbf{b}_d$, and  
\[
A_{a2}(D')=
\begin{pmatrix}
1 & -2 & 1 & 0 & O \\
-1 & 2 & 0 & -1 & \  \\
\mathbf{0} & \mathbf{a}_a & \mathbf{b}_a' & \mathbf{b}_a'' & P_a 
\end{pmatrix}
\]
where $\mathbf{b}_a'+\mathbf{b}_a''=\mathbf{b}_a$. 
Each of the matrices for $D'$ has two more columns than $D$. 
Adding the fourth column to the third column, 
taking the first column off the third column, 
and taking the first column multiplied by 2 off the second column on $A_{d2}(D')$, 
we obtain the matrix
\[
\begin{pmatrix}
1 & 0 & 0 & 0 & O \\
1 & 0 & 0 & 1 & \  \\
\mathbf{0} & \mathbf{a}_d & \mathbf{b}_d & \mathbf{b}_d'' & P_d 
\end{pmatrix}
, 
\]
and the equality  $\mathrm{rank}A_{d2}(D')=\mathrm{rank}A_{d2}(D)+2$.  
Adding the fourth column to the third column, 
the first column multiplied by 2 to the second column, 
and taking the first column off the third column on $A_{a2}(D')$, 
we obtain the matrix
\[
\begin{pmatrix}
1 & 0 & 0 & 0 & O \\
-1 & 0 & 0 & -1 & \  \\
\mathbf{0} & \mathbf{a}_a & \mathbf{b}_a & \mathbf{b}_a'' & P_a 
\end{pmatrix}
, 
\]
and the equality  
 $\mathrm{rank}A_{a2}(D')=\mathrm{rank}A_{a2}(D)+2$. 
Then the $\mathbb{Z}$-submodules 
of the kernel solutions for these matrices have same rank. 

If the Reidemeister move II changes the number of the connected components of the diagram, 
then we may order the regions of $D$ 
such that the bottom, middle and top regions of the left side of Figure \ref{Fig;R2} are ordered 1, 2 and 3 respectively. 
We denote the definite and the alternating region choice matrices of the double counting rule for $D$ 
by   
\[
A_{d2}(D)=
\begin{pmatrix}
\mathbf{a}_d & \mathbf{b}_d & \mathbf{c}_d & P_d 
\end{pmatrix}
, \ \ \ 
A_{a2}(D)=
\begin{pmatrix}
\mathbf{a}_a & \mathbf{b}_a & \mathbf{c}_a & P_a 
\end{pmatrix}
.
\]
Inserting the rows and the column corresponding to the added crossings and the added region respectively, 
we obtain the matrices for $D'$,   
\[
A_{d2}(D')=
\begin{pmatrix}
1 & 1 & 1 & 1 & O \\
1 & 1 & 1 & 1 & \  \\
\mathbf{0} & \mathbf{a}_d & \mathbf{b}_d & \mathbf{c}_d & P_d 
\end{pmatrix}
, \ \ \ 
A_{a2}(D')=
\begin{pmatrix}
1 & -1 & 1 & -1 & O \\
-1 & 1 & -1 & 1 & \  \\
\mathbf{0} & \mathbf{a}_a & \mathbf{b}_a & \mathbf{c}_a & P_a 
\end{pmatrix}
.
\]
Each of the matrices for $D'$ has one more column than $D$, 
and 
the equalities $\mathrm{rank}A_{d2}(D')=\mathrm{rank}A_{d2}(D)+1$ 
and $\mathrm{rank}A_{a2}(D')=\mathrm{rank}A_{a2}(D)+1$ hold. 
Then the $\mathbb{Z}$-submodules 
of the kernel solutions for these matrices have same rank. 

\end{proof}

\begin{lemma}
\label{lem;R3mat}
A Reidemeister moves III between link diagrams admitting integral region choices
preserves the ranks 
of the $\mathbb{Z}$-submodules 
of kernel solutions for 
definite and alternating region choice matrices of the double counting rule. 
\end{lemma}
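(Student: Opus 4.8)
The strategy mirrors the proofs of Lemma \ref{lem;R1mat} and \ref{lem;R2mat}, and ultimately Alexander's invariance argument: since a Reidemeister move III changes neither the number $n$ of crossings (the three local crossings persist), the number $d$ of connected components, nor therefore the number $n+d+1$ of regions, the matrices $A_{d2}(D)$ and $A_{d2}(D')$ (and likewise $A_{a2}(D)$ and $A_{a2}(D')$) have the same size, and it suffices to prove the equalities $\mathrm{rank}A_{d2}(D)=\mathrm{rank}A_{d2}(D')$ and $\mathrm{rank}A_{a2}(D)=\mathrm{rank}A_{a2}(D')$. The ranks of the $\mathbb{Z}$-submodules of kernel solutions are then $(n+d+1)$ minus these, and so agree.

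A Reidemeister move III comes in several variants according to the over/under data of the three strands. By Lemma \ref{lem;ccAZRCmat} a crossing change multiplies the corresponding row by $-1$ in the alternating case and leaves the matrix unchanged in the definite case; both operations preserve the rank of the matrix and the rank of its kernel submodule. Hence I would first normalize the over/under information to one standard configuration and establish the statement there.

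Outside the disk where the move takes place the two diagrams coincide. Ordering the three local crossings first and the external crossings afterwards, and the local regions (the central triangular region together with the surrounding regions meeting the three local crossings) before the purely external regions, one writes $A_{\ast 2}(D)=\begin{pmatrix} L_\ast & O \\ M_\ast & P_\ast \end{pmatrix}$ and $A_{\ast 2}(D')=\begin{pmatrix} L_\ast' & O \\ M_\ast & P_\ast \end{pmatrix}$ for $\ast\in\{d,a\}$, where $P_\ast$ and $M_\ast$ record incidences untouched by the move and $L_\ast,L_\ast'$ are the $3\times(\text{local})$ blocks of incidences of the three local crossings. The plan is to exhibit a sequence of elementary row and column operations over $\mathbb{Z}$, supported on these local rows and columns, carrying $A_{\ast 2}(D)$ to $A_{\ast 2}(D')$, exactly as in the reduction of the Alexander matrix in \cite{alexander}; this yields the desired equality of ranks.

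The main obstacle is the local bookkeeping. In contrast with Reidemeister moves I and II, where a region and some crossings are created or destroyed and the block matrices differ by inserted rows and columns, here all three crossings survive and no region is added; instead the central triangular region and its neighbours have their incidences permuted in a genuinely two-dimensional way. Identifying the correct correspondence between the local regions of $D$ and $D'$, writing down $L_\ast$ and $L_\ast'$ explicitly, and checking that the standard reductions transform one into the other is the substantive computation. As in Lemma \ref{lem;R2mat}, one must also treat separately the degenerate configurations in which two of the local regions coincide globally, verifying that the same reduction still applies once the corresponding columns are merged.
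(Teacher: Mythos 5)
Your strategy is exactly the paper's: the move preserves the numbers of crossings and regions, so it suffices to match the ranks of the matrices themselves, which is done by explicit integer row and column operations supported on the local block, with crossing changes (Lemma \ref{lem;ccAZRCmat}) used to reduce the remaining over/under configurations and with separate treatment of the degenerate cases where surrounding regions coincide. What you have not done is the substantive step: the paper writes out the $3\times 7$ (and smaller, in the degenerate cases) local blocks for $D_{\nabla}$ and $D_{\Delta}$ explicitly and verifies that multiplying the three local rows and the triangular region's column by $-1$ (in the definite case), and then adding that first column, times $\pm 1$, $2$ or $3$ according to the multiplicities with which the surrounding regions touch the local crossings, to the neighbouring columns carries one matrix to the other; the structural fact making this work is that only the triangular region's incidences genuinely change, every other local region keeping its column up to these corrections by the first column. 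As written your argument is a correct outline of the paper's proof, but the verification you defer to ``the substantive computation'' is essentially the entire content of that proof and still needs to be carried out case by case.
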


\begin{figure}[htbp]
\begin{center}
\includegraphics[height=2.5cm,clip]{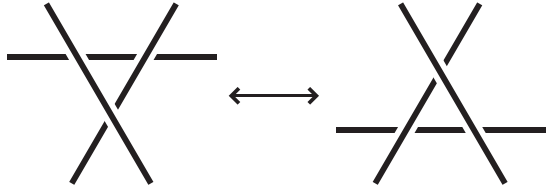}
\end{center}
\caption{A Reidemeister move III between the diagrams $D_{\nabla}$ and $D_{\Delta}$.}
\label{Fig;R3}
\end{figure}

\begin{proof}
(cf. \cite{aharasuzuki})
Let the left side of Figure \ref{Fig;R3} be 
a neighborhood of a triangle region on a link diagram $D_{\nabla}$ admitting an integral region choice, 
and we denote by $D_{\Delta}$ 
the obtained diagram from $D_{\nabla}$ by a Reidemeister move III around the triangle region  
as illustrated on the right side of Figure \ref{Fig;R3}.

If regions appearing around the move are different each other, 
then we may order the regions of $D_{\nabla}$ 
such that the triangle region on Figure \ref{Fig;R3} is ordered 1 
and that other six regions around are ordered 2, 3, 4, 5, 6, 7 clockwise from top left. 
The definite and the alternating region choice matrices of the double counting rule for $D_{\nabla}$ 
are    
\[
A_{d2}(D_{\nabla})=
\begin{pmatrix}
1 & 1 & 1 & 0 & 0 & 0 & 1 & \  \\
1 & 0 & 1 & 1 & 1 & 0 & 0 & O \\
1 & 0 & 0 & 0 & 1 & 1 & 1 & \  \\
\mathbf{0} & \mathbf{a}_d & \mathbf{b}_d & \mathbf{c}_d & \mathbf{d}_d & \mathbf{e}_d & \mathbf{f}_d & P_d 
\end{pmatrix}
, 
\]
\[
A_{a2}(D_{\nabla})=
\begin{pmatrix}
-1 & -1 & 1 & 0 & 0 & 0 & 1 & \  \\
1 & 0 & -1 & 1 & -1 & 0 & 0 & O \\
1 & 0 & 0 & 0 & -1 & 1 & -1 & \  \\
\mathbf{0} & \mathbf{a}_a & \mathbf{b}_a & \mathbf{c}_a & \mathbf{d}_a & \mathbf{e}_a & \mathbf{f}_a & P_a 
\end{pmatrix}
, 
\]
where the upper left crossing, the upper right crossing, and the lower crossing of the triangle of $D_{\nabla}$
are ordered 1,2, and 3. 
We order the crossings of $D_{\Delta}$ 
such that the lower right crossing, the lower left crossing, and the upper crossing are ordered 1,2, and 3, 
and that the others are ordered as $D_{\nabla}$.  
Then we obtain the matrices for $D_{\Delta}$, 
\[
A_{d2}(D_{\Delta})=
\begin{pmatrix}
1 & 0 & 0 & 1 & 1 & 1 & 0 & \  \\
1 & 1 & 0 & 0 & 0 & 1 & 1 & O \\
1 & 1 & 1 & 1 & 0 & 0 & 0 & \  \\
\mathbf{0} & \mathbf{a}_d & \mathbf{b}_d & \mathbf{c}_d & \mathbf{d}_d & \mathbf{e}_d & \mathbf{f}_d & P_d 
\end{pmatrix}
, 
\]
\[
A_{a2}(D_{\Delta})=
\begin{pmatrix}
-1 & 0 & 0 & 1 & -1 & 1 & 0 & \  \\
1 & -1 & 0 & 0 & 0 & -1 & 1 & O \\
1 & -1 & 1 & -1 & 0 & 0 & 0 & \  \\
\mathbf{0} & \mathbf{a}_a & \mathbf{b}_a & \mathbf{c}_a & \mathbf{d}_a & \mathbf{e}_a & \mathbf{f}_a & P_a 
\end{pmatrix}
. 
\]
After multiplying the top three rows on $A_{d_2}(D_{\nabla})$ by $-1$ and multiplying the first column by $-1$, 
if we add the first column to the $j$-th collumns, $j=2,3, \cdots , 7$, then we obtain $A_{d_2}(D_{\Delta})$.  
The matrices $A_{d2}(D_{\nabla})$ and $A_{d2}(D_{\Delta})$ have 
same size and same rank,  
then the $\mathbb{Z}$-submodules 
of the kernel solutions for the matrices have same rank. 
Adding the first column multiplied by $(-1)^{j+1}$ to the $j$-th columns, $j=2,3, \cdots , 7$, 
we obtain $A_{a2}(D_{\Delta})$ from $A_{a2}(D_{\nabla})$.  
The matrices $A_{a2}(D_{\nabla})$ and $A_{a2}(D_{\Delta})$ have 
same size and same rank,  
then the $\mathbb{Z}$-submodules 
of the kernel solutions for these matrices have same rank. 

If the top left and top right regions of the left side of Figure \ref{Fig;R3} coincide on $D_{\nabla}$, 
and if this and the other regions appearing around the move are different each other,    
then we may order the regions of $D_{\nabla}$ 
such that the triangle region on Figure \ref{Fig;R3} is ordered 1 
and that other five regions around are ordered 2, 3, 4, 5, 6 clockwise from top left. 
The definite and the alternating region choice matrices of the double counting rule for $D_{\nabla}$ 
are   
\[
A_{d2}(D_{\nabla})=
\begin{pmatrix}
1 & 1 & 1 & 0 & 0 & 1 & \  \\
1 & 1 & 1 & 1 & 0 & 0 & O \\
1 & 0 & 0 & 1 & 1 & 1 & \  \\
\mathbf{0} & \mathbf{a}_d & \mathbf{b}_d & \mathbf{d}_d & \mathbf{e}_d & \mathbf{f}_d & P_d 
\end{pmatrix}
, 
\]
\[
A_{a2}(D_{\nabla})=
\begin{pmatrix}
-1 & -1 & 1 & 0 & 0 & 1 & \  \\
1 & 1 & -1 & -1 & 0 & 0 & O \\
1 & 0 & 0 & -1 & 1 & -1 & \  \\
\mathbf{0} & \mathbf{a}_a & \mathbf{b}_a & \mathbf{d}_a & \mathbf{e}_a & \mathbf{f}_a & P_a 
\end{pmatrix}
. 
\]
Then we obtain the matrices for $D_{\Delta}$, 
\[
A_{d2}(D_{\Delta})=
\begin{pmatrix}
1 & 1 & 0 & 1 & 1 & 0 & \  \\
1 & 1 & 0 & 0 & 1 & 1 & O \\
1 & 2 & 1 & 0 & 0 & 0 & \  \\
\mathbf{0} & \mathbf{a}_d & \mathbf{b}_d & \mathbf{d}_d & \mathbf{e}_d & \mathbf{f}_d & P_d 
\end{pmatrix}
,
\]
\[
A_{a2}(D_{\Delta})=
\begin{pmatrix}
-1 & 1 & 0 & -1 & 1 & 0 & \  \\
1 & -1 & 0 & 0 & -1 & 1 & O \\
1 & -2 & 1 & 0 & 0 & 0 & \  \\
\mathbf{0} & \mathbf{a}_a & \mathbf{b}_a & \mathbf{d}_a & \mathbf{e}_a & \mathbf{f}_a & P_a 
\end{pmatrix}
.
\]
After multiplying the top three rows on $A_{d_2}(D_{\nabla})$ by $-1$ and multiplying the first column by $-1$, 
if we add the first column to the $j$-th collumns, $j=3, \cdots , 6$ 
and the first column multiplied by 2 to the second column, 
then we obtain $A_{d_2}(D_{\Delta})$.  
The matrices $A_{d2}(D_{\nabla})$ and $A_{d2}(D_{\Delta})$ have 
same size and same rank,  
then the $\mathbb{Z}$-submodules 
of the kernel solutions for the matrices have same rank. 
Adding the first column to the third, fourth, and sixth columns,
and taking the first column off the fifth column 
and the first column multiplied by 2 off the second column, 
we obtain $A_{a2}(D_{\Delta})$ from $A_{a2}(D_{\nabla})$.  
The matrices $A_{a2}(D_{\nabla})$ and $A_{a2}(D_{\Delta})$ have 
same size and same rank,  
then the $\mathbb{Z}$-submodules 
of the kernel solutions for these matrices have same rank. 

If the top left, top right, and the bottom middle regions of the left side of Figure \ref{Fig;R3} coincide on $D_{\nabla}$, 
then  this and the other regions appearing around the move are different each other. 
We may order the regions of $D_{\nabla}$ 
such that the triangle region on Figure \ref{Fig;R3} is ordered 1 
and that other four regions around are ordered 2, 3, 4, 5 clockwise from top left. 
The definite and the alternating region choice matrices of the double counting rule for $D_{\nabla}$ 
are   
\[
A_{d2}(D_{\nabla})=
\begin{pmatrix}
1 & 1 & 1 & 0 & 1 & \  \\
1 & 1 & 1 & 1 & 0 & O \\
1 & 1 & 0 & 1 & 1 & \  \\
\mathbf{0} & \mathbf{a}_d & \mathbf{b}_d & \mathbf{d}_d & \mathbf{f}_d & P_d 
\end{pmatrix}
, 
\]
\[
A_{a2}(D_{\nabla})=
\begin{pmatrix}
-1 & -1 & 1 & 0 & 1 & \  \\
1 & 1 & -1 & -1 & 0 & O \\
1 & 1 & 0 & -1 & -1 & \  \\
\mathbf{0} & \mathbf{a}_a & \mathbf{b}_a & \mathbf{d}_a & \mathbf{f}_a & P_a 
\end{pmatrix}
. 
\]
Then we obtain the matrices for $D_{\Delta}$, 
\[
A_{d2}(D_{\Delta})=
\begin{pmatrix}
1 & 2 & 0 & 1 & 0 & \  \\
1 & 2 & 0 & 0 & 1 & O \\
1 & 2 & 1 & 0  & 0 & \  \\
\mathbf{0} & \mathbf{a}_d & \mathbf{b}_d & \mathbf{d}_d & \mathbf{f}_d & P_d 
\end{pmatrix}
,
\]
\[
A_{a2}(D_{\Delta})=
\begin{pmatrix}
-1 & 2 & 0 & -1 & 0 & \  \\
1 & -2 & 0 & 0  & 1 & O \\
1 & -2 & 1 & 0 & 0 & \  \\
\mathbf{0} & \mathbf{a}_a & \mathbf{b}_a & \mathbf{d}_a & \mathbf{f}_a & P_a 
\end{pmatrix}
.
\]
After multiplying the top three rows on $A_{d_2}(D_{\nabla})$ by $-1$ and multiplying the first column by $-1$, 
if we add the first column to the third, fourth and fifth collumns, 
and the first column multiplied by 3 to the second column, 
then we obtain $A_{d_2}(D_{\Delta})$.  
The matrices $A_{d2}(D_{\nabla})$ and $A_{d2}(D_{\Delta})$ have 
same size and same rank,  
then the $\mathbb{Z}$-submodules 
of the kernel solutions for the matrices have same rank. 
Adding the first column to the third, fourth, and fifth columns,
and taking the first column multiplied by 3 off the second columns, 
we obtain $A_{a2}(D_{\Delta})$ from $A_{a2}(D_{\nabla})$.  
The matrices $A_{a2}(D_{\nabla})$ and $A_{a2}(D_{\Delta})$ have 
same size and same rank,  
then the $\mathbb{Z}$-submodules 
of the kernel solutions for these matrices have same rank. 

If the top left and top right regions of the left side of Figure \ref{Fig;R3} coincide on $D_{\nabla}$, 
and if the bottom left and bottom right regions also coincide on $D_{\nabla}$, 
we may  order the regions of $D_{\nabla}$ 
such that the triangle region on Figure \ref{Fig;R3} is ordered 1 
and that other four regions around are ordered 2, 3, 4, 5 clockwise from top left. 
The definite and the alternating region choice matrices of the double counting rule for $D_{\nabla}$ 
are   
\[
A_{d2}(D_{\nabla})=
\begin{pmatrix}
1 & 1 & 1 & 1 & 0 & \  \\
1 & 1 & 1 & 1 & 0 & O \\
1 & 0 & 0 & 2 & 1 & \  \\
\mathbf{0} & \mathbf{a}_d & \mathbf{b}_d & \mathbf{d}_d & \mathbf{e}_d & P_d 
\end{pmatrix}
, 
\]
\[
A_{a2}(D_{\nabla})=
\begin{pmatrix}
-1 & -1 & 1 & 1 & 0 & \  \\
1 & 1 & -1 & -1 & 0 & O \\
1 & 0 & 0 & -2 & 1 & \  \\
\mathbf{0} & \mathbf{a}_a & \mathbf{b}_a & \mathbf{d}_a & \mathbf{e}_a  & P_a 
\end{pmatrix}
. 
\]
Then we obtain the matrices for $D_{\Delta}$, 
\[
A_{d2}(D_{\Delta})=
\begin{pmatrix}
1 & 1 & 0 & 1 & 1 & \  \\
1 & 1 & 0 & 1 & 1 & O \\
1 & 2 & 1 & 0 & 0 & \  \\
\mathbf{0} & \mathbf{a}_d & \mathbf{b}_d & \mathbf{d}_d & \mathbf{e}_d & P_d 
\end{pmatrix}
,
\]
\[
A_{a2}(D_{\Delta})=
\begin{pmatrix}
-1 & 1 & 0 & -1 & 1 & \  \\
1 & -1 & 0 & 1 & -1 & O \\
1 & -2 & 1 & 0 & 0 & \  \\
\mathbf{0} & \mathbf{a}_a & \mathbf{b}_a & \mathbf{d}_a & \mathbf{e}_a & P_a 
\end{pmatrix}
.
\]
After multiplying the top three rows on $A_{d_2}(D_{\nabla})$ by $-1$ and multiplying the first column by $-1$, 
if we add the first column to the third and fifth collumns, 
and the first column multiplied by 2 to the second and fourth columns, 
then we obtain $A_{d_2}(D_{\Delta})$.  
The matrices $A_{d2}(D_{\nabla})$ and $A_{d2}(D_{\Delta})$ have 
same size and same rank,  
then the $\mathbb{Z}$-submodules 
of the kernel solutions for the matrices have same rank. 
Adding the first column multiplied by $-2$, $1$, $2$ and $-1$ to 
the second, third, fourth, and fifth columns respectively, 
we obtain $A_{a2}(D_{\Delta})$ from $A_{a2}(D_{\nabla})$.  
The matrices $A_{a2}(D_{\nabla})$ and $A_{a2}(D_{\Delta})$ have 
same size and same rank,  
then the $\mathbb{Z}$-submodules 
of the kernel solutions for these matrices have same rank. 

The proof for the other cases are given by the similar arguments as above, 
or reduced to the one of the above cases applying Lemma \ref{lem;ccAZRCmat}.  
\end{proof}

\begin{proof}[Proof of Theorem \ref{thm;rank}.]
It is well known that  
we can make any link diagram into a diagram of a trivial link after some crossing changes, 
and that 
some Reidemeister moves can transform the non-trivial diagram of a trivial link 
to the split sum of the knot diagram with only one crossing and the $l-1$ copies of the trivial knot daigaram, 
which is given in Example \ref{ex;n1}. 
On this obtained diagram $D_0$ with certain orders of crossings and regions, 
we obtain 
\[
A_{d2}(D_0)=
\begin{pmatrix}
2        & 1 & 1 & 0 & 0 & 0 & \ldots & 0 
\end{pmatrix}
,
\]
and 
\[
A_{a2}(D_0)=
\begin{pmatrix}
-2\varepsilon & \varepsilon & \varepsilon & 0 & 0 & 0 & \ldots & 0  
\end{pmatrix}
,
\]
where $\varepsilon =1$ if the crossing is positive, otherwise $\varepsilon =-1$, 
and the number of $0$ appearing on each matrix is $l-1$. 
The both matrices have $l+2$ columns. 
Their ranks are equal to $1=1+l-l$. 
Then the desired claim holds for $D_0$. 
By Lemma \ref{lem;ccAZRCmat}, \ref{lem;R1mat}, \ref{lem;R2mat}, and \ref{lem;R3mat}, 
crossing changes and Reidemeister moves preserve this claim. 
Then this theorem is proved.   
\end{proof}

\begin{remark}
As commented in Remark \ref{rem;arcfixAZRCker} and \ref{rem;arcfixDZRCker}, 
Ahara and Suzuki \cite{aharasuzuki} and Harada \cite{haradaM}  
showed that Reidemeister moves and crossing changes preserve 
the existence of the kernel solution, and that 
the knot diagram with only one crossing has a kernel solution, 
to prove Lemma \ref{lem;arcfixAZRCker}  and \ref{lem;arcfixDZRCker} for a knot diagram. 
Their arguments are extended to that for the proof of  Theorem \ref{thm;rank}, 
though Harada did not describe how alternating region choice matrices are affected. 
\end{remark}

\section{Region choice matrices of the single counting rule}\label{sect;RCmatrixS}


Before determining the rank of region choice matrices of the single counting rule, 
we observe the images of 
the homomorphisms $\Phi _{d1}(D), \Phi _{d2}(D), \Phi _{a1}(D), \Phi _{a2}(D)$ 
defined in Section \ref{sect;kernel}.
The arguments in the proofs of Theorem \ref{thm;aharasuzuki;mat} (1) and \ref{thm;harada;mat} (1) 
given in Section \ref{sect;DZRCPpf} and \ref{sect;AZRCPpf} 
imply the following result. 

\begin{lemma}\label{lem;Im2to1}
Let $D$ be a diagram of an $l$-component link. 
We assume that $D$ has $d$ connected components and $n$ crossings, 
$n\geq 1$.  
We take $\mathbf{c}\in \mathbb{Z}^n$. 
\begin{enumerate}
\item 
If there exists a solution $\mathbf{w}\in \mathbb{Z}^{n+d+1}$ 
such that $A_{d2}(D)\mathbf{w}+\mathbf{c}=\mathbf{0}$,  
then 
there exists a solution $\mathbf{u}\in \mathbb{Z}^{n+d+1}$ 
such that $A_{d1}(D)\mathbf{u}+\mathbf{c}=\mathbf{0}$.  
\item 
If there exists a solution $\mathbf{w}\in \mathbb{Z}^{n+d+1}$ 
such that $A_{a2}(D)\mathbf{w}+\mathbf{c}=\mathbf{0}$,  
then 
there exists a solution $\mathbf{u}\in \mathbb{Z}^{n+d+1}$ 
such that $A_{a1}(D)\mathbf{u}+\mathbf{c}=\mathbf{0}$.  
\end{enumerate}
\hfill $\square$
\end{lemma}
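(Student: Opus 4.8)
The plan is to isolate, from the proofs of Theorem \ref{thm;aharasuzuki;mat} (1) and Theorem \ref{thm;harada;mat} (1) in Sections \ref{sect;DZRCPpf} and \ref{sect;AZRCPpf}, exactly the part that deduces a single-counting solution from a double-counting solution, and to observe that this part never uses the knot hypothesis. In those proofs the knot hypothesis enters at a single place: when Theorem \ref{thm;aharasuzuki;mat} (2) (resp. Theorem \ref{thm;harada;mat} (2)) is invoked to manufacture a double-counting solution $\mathbf{w}$ with $A_{d2}(D)\mathbf{w}+\mathbf{c}=\mathbf{0}$ (resp. $A_{a2}(D)\mathbf{w}+\mathbf{c}=\mathbf{0}$). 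For Lemma \ref{lem;Im2to1} such a $\mathbf{w}$ is handed to us by hypothesis, so I would simply rerun the remainder of each proof verbatim for an arbitrary link diagram $D$.

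Concretely, for part (1): if $D$ is irreducible then $A_{d1}(D)=A_{d2}(D)$, and the given $\mathbf{w}$ already satisfies $A_{d1}(D)\mathbf{w}+\mathbf{c}=\mathbf{0}$, so $\mathbf{u}=\mathbf{w}$ works. If $D$ has reducible crossings, then for each region $R_j$ let $\mathcal{X}_j$ be the set of reducible crossings touched twice by $R_j$; by Lemma \ref{lem;add1DZRCred}, which is already established for link diagrams, pick $\mathbf{v}_y\in\mathbb{Z}^{n+d+1}$ with $A_{d1}(D)\mathbf{v}_y$ equal to the coordinate vector supported at $y$, for each reducible $y$. The definitions of the definite matrices give the column identity $A_{d2}(D)\mathbf{r}_j-A_{d1}(D)\mathbf{r}_j=\sum_{y\in\mathcal{X}_j}A_{d1}(D)\mathbf{v}_y$, where $\mathbf{r}_j$ is the $j$-th standard basis vector. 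Writing $\mathbf{w}=\sum_j w_j\mathbf{r}_j$ and setting $\mathbf{u}=\mathbf{w}+\sum_j w_j\sum_{y\in\mathcal{X}_j}\mathbf{v}_y$, the telescoping computation of the proof of Theorem \ref{thm;aharasuzuki;mat} (1) yields $A_{d1}(D)\mathbf{u}=A_{d2}(D)\mathbf{w}=-\mathbf{c}$.

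For part (2) I would carry out the identical scheme with $A_{a1}(D),A_{a2}(D)$ replacing $A_{d1}(D),A_{d2}(D)$ and with Lemma \ref{lem;add1AZRCred} replacing Lemma \ref{lem;add1DZRCred}, following the proof of Theorem \ref{thm;harada;mat} (1); the irreducible case again collapses to $\mathbf{u}=\mathbf{w}$ since then $A_{a1}(D)=A_{a2}(D)$.

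The step needing the most care—more a bookkeeping point than a genuine obstacle—is verifying that the column identity relating the single- and double-counting matrices survives the passage to link diagrams, in particular that the contributions of each reducible crossing are correctly accounted over the regions, and, in the alternating case, that the sign $a^{(a1)}_{ij}=\pm 1$ attached to a doubly-touched reducible crossing ($\diaPosTouchLR$ versus $\diaNegTouchTB$) is matched by the corresponding choice of $\pm\mathbf{v}_y$. The conceptual content, however, is minimal: it is exactly the separation, already present in the original arguments, between surjectivity of the double-counting map (which requires the knot hypothesis) and the reduction of the double-counting rule to the single-counting rule (which does not), and only the latter is what Lemma \ref{lem;Im2to1} records.
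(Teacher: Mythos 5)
Your proposal is correct and is exactly the paper's own argument: the paper proves Lemma \ref{lem;Im2to1} by simply observing that the reductions in the proofs of Theorem \ref{thm;aharasuzuki;mat} (1) and Theorem \ref{thm;harada;mat} (1) use the knot hypothesis only to produce the double-counting solution $\mathbf{w}$, which the lemma instead takes as given, while Lemmas \ref{lem;add1DZRCred} and \ref{lem;add1AZRCred} and the column identity already hold for link diagrams. Your closing remark about matching the sign $a^{(a1)}_{ij}=\pm 1$ of a doubly-touched reducible crossing with $\pm\mathbf{v}_y$ in the alternating case is a legitimate bookkeeping point that the paper's displayed identity glosses over, and your fix is the right one.
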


By a similar argument, 
the converse of Lemma \ref{lem;Im2to1} is shown as follows. 

\begin{lemma}\label{lem;Im1to2}
Let $D$ be a diagram of an $l$-component link. 
We assume that $D$ has $d$ connected components and $n$ crossings $x_1, \cdots , x_n$, 
$n\geq 1$.  
Let $R_1, \cdots , R_{n+d+1}$ be the regions of $D$. 
We take $\mathbf{c}\in \mathbb{Z}^n$. 
\begin{enumerate}
\item 
If there exists a solution $\mathbf{u}\in \mathbb{Z}^{n+d+1}$ 
such that $A_{d1}(D)\mathbf{u}+\mathbf{c}=\mathbf{0}$,  
then 
there exists a solution $\mathbf{w}\in \mathbb{Z}^{n+d+1}$ 
such that $A_{d2}(D)\mathbf{w}+\mathbf{c}=\mathbf{0}$.  
\item 
If there exists a solution $\mathbf{u}\in \mathbb{Z}^{n+d+1}$ 
such that $A_{a1}(D)\mathbf{u}+\mathbf{c}=\mathbf{0}$,  
then 
there exists a solution $\mathbf{w}\in \mathbb{Z}^{n+d+1}$ 
such that $A_{a2}(D)\mathbf{w}+\mathbf{c}=\mathbf{0}$.  
\end{enumerate}
\end{lemma}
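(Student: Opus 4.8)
The plan is to prove both implications by the same argument, using the definite pair $A_{d1},A_{d2}$ for part (1) and the alternating pair $A_{a1},A_{a2}$ for part (2); write $A_1,A_2$ for either pair. The assertion is equivalent to the containment of images $\mathrm{Im}\,A_1\subseteq\mathrm{Im}\,A_2$ (a subgroup containment, since $-\mathbf{c}$ ranges over all of $\mathbb{Z}^n$ as $\mathbf{c}$ does), which is exactly the reverse of the containment furnished by Lemma \ref{lem;Im2to1}. Since the columns $A_1\mathbf{r}_1,\dots,A_1\mathbf{r}_{n+d+1}$ generate $\mathrm{Im}\,A_1$, it suffices to show each column $A_1\mathbf{r}_j$ lies in $\mathrm{Im}\,A_2$; concretely, starting from a given $\mathbf{u}$ with $A_1\mathbf{u}=-\mathbf{c}$, I will correct it to a $\mathbf{w}$ with $A_2\mathbf{w}=-\mathbf{c}$ by subtracting off the discrepancy between the two counting rules.

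First I would record, directly from the definitions, that $A_2-A_1$ is supported on reducible crossings: the row indexed by a crossing $x_i$ vanishes unless some region $R_j$ touches $x_i$ twice, which forces $x_i$ to be reducible, and then $R_j$ is the unique doubling region with entry equal to the single-rule entry (which is $+1$ in the definite case and $\pm1$ in the alternating case). Hence, with $\mathcal{X}_j$ the set of reducible crossings touched by $R_j$ twice (as in the proof of Theorem \ref{thm;harada;mat}(1)), one has $(A_2-A_1)\mathbf{r}_j=\sum_{y\in\mathcal{X}_j}\delta_y\mathbf{e}_y$, where $\mathbf{e}_y$ is the standard basis vector at $y$ and $\delta_y=\pm1$. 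This is precisely the identity already invoked in the proofs of Theorems \ref{thm;aharasuzuki;mat}(1) and \ref{thm;harada;mat}(1), now read off in the opposite direction.

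The key input is Theorem \ref{thm;add1DZRC} (resp. Theorem \ref{thm;add1AZRC}): for every reducible crossing $y$ there is $\mathbf{v}_y$ with $A_2\mathbf{v}_y=\mathbf{e}_y$. Here I would insert the small geometric observation that every reducible crossing is a self-crossing of a single link component, so that the hypothesis ``arcs in the same link component'' of those theorems is satisfied: a circle meeting $D$ only at a reducible (nugatory) crossing $y$ would, if the two strands at $y$ lay on distinct components, meet one of them exactly once, contradicting the Jordan curve theorem. Granting this, set $\mathbf{w}=\mathbf{u}-\sum_j u_j\sum_{y\in\mathcal{X}_j}\delta_y\mathbf{v}_y$, and compute, using $A_2\mathbf{v}_y=\mathbf{e}_y$ together with the displayed identity,
\[
A_2\mathbf{w}=A_2\mathbf{u}-\sum_j u_j\sum_{y\in\mathcal{X}_j}\delta_y\mathbf{e}_y=A_2\mathbf{u}-(A_2-A_1)\mathbf{u}=A_1\mathbf{u}=-\mathbf{c},
\]
which is the desired solution. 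This is the forward computation in the proof of Theorem \ref{thm;harada;mat}(1) run in reverse.

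The definitional identity for $(A_2-A_1)\mathbf{r}_j$ and the final cancellation are routine. The one point demanding care, and the main obstacle, is the supply of vectors $\mathbf{v}_y$ with $A_2\mathbf{v}_y=\mathbf{e}_y$ at reducible crossings: this is where Theorems \ref{thm;add1DZRC}/\ref{thm;add1AZRC} and the remark that reducible crossings are self-crossings both enter. In the alternating case one must in addition track the signs $\delta_y=a^{(a1)}_{yj}$, but since these are absorbed uniformly into the definition of $\mathbf{w}$ they present no genuine difficulty.
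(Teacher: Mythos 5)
Your proof is correct and follows essentially the same route as the paper's: the paper likewise corrects $\mathbf{u}$ by subtracting $\sum_j u_j\sum_{y\in\mathcal{X}_j}\mathbf{v}'_y$, with the vectors $\mathbf{v}'_y$ satisfying $A_{d2}(D)\mathbf{v}'_y=\mathbf{e}_y$ (resp. $A_{a2}(D)\mathbf{v}'_y=\mathbf{e}_y$) supplied by Theorem \ref{thm;add1DZRC} (resp. Theorem \ref{thm;add1AZRC}). Your two extra touches---tracking the sign $\delta_y=a^{(a1)}_{yj}$ in the alternating case, and verifying via the Jordan curve theorem that a reducible crossing is a self-crossing of one component so that those theorems apply---are worthwhile, since the paper's displayed identity elides the former and takes the latter for granted.
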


\begin{proof}
If the given diagram $D$ is irreducible, 
then we have $A_{d1}(D)=A_{d2}(D)$ and $A_{a1}(D)=A_{a2}(D)$. 

We suppose that the diagram $D$ has at least one reducible crossing. 
For a region $R_j$, 
let $\mathcal{X}_j$ be the set of reducible crossings touched by $R_j$ twice, $j=1, \cdots , n+d+1$. 
A set $\mathcal{X}_j$ might be empty. 
We take $\mathbf{r}_j\in \mathbb{Z}^{n+d+1}$ such that 
any components of $\mathbf{r}_j$ are $0$ 
but the $j$-th component is $1$. 
Choosing $R_j$ once corresponds with $\mathbf{r}_j$. 
\begin{enumerate}
\item 
We suppose that $A_{d1}(D)\mathbf{u}+\mathbf{c}=\mathbf{0}$. 
Let $u_j$ be the $j$-th component of $\mathbf{u}$, $j=1, \cdots , n+d+1$. 
Applying Theorem \ref{thm;add1DZRC} 
for each reducible crossing $y\in \mathcal{X}_j$, 
we obtain $\mathbf{v}'_y \in \mathbb{Z}^{n+d+1}$ such that 
any components of $A_{d2}(D)\mathbf{v}'_y$ are $0$ 
but the component of $A_{d2}(D)\mathbf{v}'_y$ to $y$ is $1$. 
By the definitions of the definite region choice matrices, 
we have 
\[
 A_{d2}(D)\mathbf{r}_j - A_{d1}(D)\mathbf{r}_j =\sum _{y\in \mathcal{X}_j} A_{d2}(D)\mathbf{v}'_y.
\] 
We note $\displaystyle \mathbf{u}=\sum _j u_j\mathbf{r}_j$. 
We take $\displaystyle \mathbf{w}=\mathbf{u}-\sum _j u_j \sum _{y\in \mathcal{X}_j} \mathbf{v}'_y$. 
Then we have 
\begin{eqnarray*}
A_{d2}(D)\mathbf{w}
&=& A_{d2}(D)\mathbf{u}-\sum _j u_j \sum _{y\in \mathcal{X}_j} A_{d2}(D)\mathbf{v}'_y \\
&=& \sum _j u_j \left( A_{d2}(D)\mathbf{r}_j - \sum _{y\in \mathcal{X}_j} A_{d2}(D)\mathbf{v}'_y \right) \\
&=& \sum _j u_j A_{d1}(D)\mathbf{r}_j \\
&=& A_{d1}(D)\mathbf{u} \\
&=& -\mathbf{c}. 
\end{eqnarray*}
Therefore we have $A_{d2}(D)\mathbf{w} +\mathbf{c}=\mathbf{0}$. 

\item 
We suppose that $A_{a1}(D)\mathbf{u}+\mathbf{c}=\mathbf{0}$. 
Let $u_j$ be the $j$-th component of $\mathbf{u}$, $j=1, \cdots , n+d+1$. 
Applying Theorem \ref{thm;add1AZRC} 
for each reducible crossing $y\in \mathcal{X}_j$, 
we obtain $\mathbf{v}'_y \in \mathbb{Z}^{n+d+1}$ such that 
any components of $A_{a2}(D)\mathbf{v}'_y$ are $0$ 
but the component of $A_{a2}(D)\mathbf{v}'_y$ to $y$ is $1$. 
By the definitions of the definite region choice matrices, 
we have 
\[
 A_{a2}(D)\mathbf{r}_j - A_{a1}(D)\mathbf{r}_j =\sum _{y\in \mathcal{X}_j} A_{a2}(D)\mathbf{v}'_y.
\] 
We note $\displaystyle \mathbf{u}=\sum _j u_j\mathbf{r}_j$. 
We take $\displaystyle \mathbf{w}=\mathbf{u}-\sum _j u_j \sum _{y\in \mathcal{X}_j} \mathbf{v}'_y$. 
Then we have 
\begin{eqnarray*}
A_{a2}(D)\mathbf{w}
&=& A_{a2}(D)\mathbf{u}-\sum _j u_j \sum _{y\in \mathcal{X}_j} A_{a2}(D)\mathbf{v}'_y \\
&=& \sum _j u_j \left( A_{a2}(D)\mathbf{r}_j - \sum _{y\in \mathcal{X}_j} A_{a2}(D)\mathbf{v}'_y \right) \\
&=& \sum _j u_j A_{a1}(D)\mathbf{r}_j \\
&=& A_{a1}(D)\mathbf{u} \\
&=& -\mathbf{c}. 
\end{eqnarray*}
Therefore we have $A_{a2}(D)\mathbf{w} +\mathbf{c}=\mathbf{0}$. 
\end{enumerate}
\end{proof}

By Lemma \ref{lem;Im2to1} and \ref{lem;Im1to2}, 
we obtain the following result. 

\begin{theorem}\label{thm;image}
Let $D$ be a link diagram. 
We assume that $D$ has at least one crossing. 
\begin{enumerate}
\item 
The image of the homomorphism $\Phi _{d1}(D)$ coincides with 
the image of the homomorphism $\Phi _{d2}(D)$. 
\item 
The image of the homomorphism $\Phi _{a1}(D)$ coincides with 
the image of the homomorphism $\Phi _{a2}(D)$. 
\end{enumerate}
\hfill $\square$
\end{theorem}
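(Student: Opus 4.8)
The plan is to reduce the statement to the two preceding lemmas by translating their solvability assertions into inclusions of images. First I would record the elementary dictionary underlying both lemmas: for each of the region choice matrices $A_{\ast}(D)$ and each $\mathbf{c}\in\mathbb{Z}^n$, the system $A_{\ast}(D)\mathbf{u}+\mathbf{c}=\mathbf{0}$ admits a solution $\mathbf{u}\in\mathbb{Z}^{n+d+1}$ if and only if $-\mathbf{c}$ lies in the image of the corresponding homomorphism $\Phi_{\ast}(D)$. Since $\mathbf{c}$ ranges over all of $\mathbb{Z}^n$, so does $-\mathbf{c}$, and hence each lemma, read through this dictionary, becomes a set-theoretic inclusion between the two images rather than a mere pointwise implication.

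For part (1), Lemma \ref{lem;Im2to1}(1) states that solvability of $A_{d2}(D)\mathbf{w}+\mathbf{c}=\mathbf{0}$ implies solvability of $A_{d1}(D)\mathbf{u}+\mathbf{c}=\mathbf{0}$; by the dictionary above this is exactly the inclusion $\mathrm{Im}\,\Phi_{d2}(D)\subseteq\mathrm{Im}\,\Phi_{d1}(D)$. Conversely, Lemma \ref{lem;Im1to2}(1) yields $\mathrm{Im}\,\Phi_{d1}(D)\subseteq\mathrm{Im}\,\Phi_{d2}(D)$. The two inclusions together give $\mathrm{Im}\,\Phi_{d1}(D)=\mathrm{Im}\,\Phi_{d2}(D)$, which is the assertion of (1). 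Part (2) is proved by the identical argument, invoking the second items of Lemmas \ref{lem;Im2to1} and \ref{lem;Im1to2} in place of the first.

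I do not expect a genuine obstacle here: all of the substantive work---constructing, for each reducible crossing $y$, a vector $\mathbf{v}_y$ realizing a single $+1$ in the image, and accounting for the discrepancy between the single- and double-counting matrices through the sets $\mathcal{X}_j$---has already been carried out inside the proofs of Lemmas \ref{lem;Im2to1} and \ref{lem;Im1to2}. The one point deserving a moment's care is the quantifier: one must use that $\mathbf{c}$, and therefore $-\mathbf{c}$, is allowed to be an \emph{arbitrary} element of $\mathbb{Z}^n$, so that the pointwise implications of the two lemmas upgrade to inclusions of the full images. Once this is observed, the theorem follows immediately and no further computation is required.
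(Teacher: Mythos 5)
Your proposal is correct and is exactly the paper's argument: the theorem is stated as an immediate consequence of Lemmas \ref{lem;Im2to1} and \ref{lem;Im1to2}, with the translation between solvability of $A_{\ast}(D)\mathbf{u}+\mathbf{c}=\mathbf{0}$ for arbitrary $\mathbf{c}$ and membership of $-\mathbf{c}$ in the image left implicit. Your explicit remark about the quantifier on $\mathbf{c}$ is the only content the paper does not spell out, and it is handled correctly.
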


The above theorem implies that 
the existence of a solution of the single counting rule 
coincides with 
that of the double counting rule  
for each of the both integral region choice problem. 
Particularly, the first and second results of Theorem \ref{thm;aharasuzuki;mat} are equivalent, 
and the first and second results of Theorem \ref{thm;harada;mat} are equivalent. 

By Theorem \ref{thm;rank}, \ref{thm;image}, and the homomorphism theorem, 
we obtain the following theorem. 

\begin{theorem}\label{thm;rankS}
Let $D$ be a diagram of an $l$-component link. 
We assume that $D$ has $d$ connected components and $n$ crossings, 
$n\geq 1$.  
Then each rank of the definite and alternating region choice matrices of the single counting rule, 
$A_{d1}(D)$ and $A_{a1}(D)$, 
is $n+d-l$,  
and each 
rank of the $\mathbb{Z}$-submodules 
$\{ \mathbf{u} \in \mathbb{Z}^{n+d+1} \mid A_{d1}(D) \mathbf{u}=\mathbf{0} \}$
and 
$\{ \mathbf{u} \in \mathbb{Z}^{n+d+1} \mid A_{a1}(D) \mathbf{u}=\mathbf{0} \}$
is $l+1$.
\hfill $\square$
\end{theorem}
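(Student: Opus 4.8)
The plan is to deduce Theorem \ref{thm;rankS} formally from the already-established Theorem \ref{thm;rank} (which settles the double counting rule) together with Theorem \ref{thm;image} (which identifies the images of the single- and double-counting homomorphisms), invoking only the rank--nullity identity for the homomorphisms $\Phi _{d1}(D), \Phi _{d2}(D), \Phi _{a1}(D), \Phi _{a2}(D) \colon \mathbb{Z}^{n+d+1} \rightarrow \mathbb{Z}^n$ defined in Section \ref{sect;kernel}. No new geometric input is needed; the whole argument is a bookkeeping of ranks.

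First I would recall that the rank of an integer matrix equals the rank of the image of the $\mathbb{Z}$-homomorphism it represents, so that $\mathrm{rank}\,A_{d1}(D)=\mathrm{rank}\,(\mathrm{Im}\,\Phi _{d1}(D))$ and likewise for the other three matrices. By Theorem \ref{thm;image} (1) the images $\mathrm{Im}\,\Phi _{d1}(D)$ and $\mathrm{Im}\,\Phi _{d2}(D)$ coincide, hence have the same rank, and therefore $\mathrm{rank}\,A_{d1}(D)=\mathrm{rank}\,A_{d2}(D)$. Theorem \ref{thm;rank} evaluates the right-hand side as $n+d-l$, so $\mathrm{rank}\,A_{d1}(D)=n+d-l$. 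Running the identical argument with Theorem \ref{thm;image} (2) in place of (1) yields $\mathrm{rank}\,A_{a1}(D)=\mathrm{rank}\,A_{a2}(D)=n+d-l$, which establishes the first assertion of the theorem.

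For the kernel ranks I would apply the homomorphism theorem: for any homomorphism $\Phi \colon \mathbb{Z}^{n+d+1}\rightarrow \mathbb{Z}^n$ the rank of the domain splits as $\mathrm{rank}(\ker \Phi)+\mathrm{rank}(\mathrm{Im}\,\Phi)=n+d+1$. Specializing to $\Phi _{d1}(D)$ and inserting the image rank computed above gives
\[
\mathrm{rank}\{ \mathbf{u}\in \mathbb{Z}^{n+d+1}\mid A_{d1}(D)\mathbf{u}=\mathbf{0}\}=(n+d+1)-(n+d-l)=l+1,
\]
and the same computation applied to $\Phi _{a1}(D)$ produces the stated rank $l+1$ for the kernel of $A_{a1}(D)$.

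Since every step is a direct appeal to a result already proved in the excerpt or to the standard rank--nullity identity for finitely generated free abelian groups, I do not expect a genuine obstacle. The one point deserving a word of care is the passage from ``equal images'' to ``equal matrix ranks'': this is immediate, because the rank of a subgroup of $\mathbb{Z}^n$ depends only on the subgroup, so two coinciding images automatically share the same rank. With that remark recorded, the proof reduces to the short chain of rank equalities sketched above.
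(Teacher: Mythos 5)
Your proposal is correct and follows exactly the route the paper takes: the paper derives Theorem \ref{thm;rankS} from Theorem \ref{thm;rank}, Theorem \ref{thm;image}, and the homomorphism theorem, which is precisely your chain of equalities (equal images give equal matrix ranks, then rank--nullity gives the kernel ranks). Your write-up simply makes explicit the bookkeeping that the paper leaves to the reader.
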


In \cite{chenggao}, Chen and Gao determined 
the $\mathbb{Z}_2$-rank of the incidence matrix for connected link diagrams. 
In \cite{hashizume2013}, Hashizume generalized their result to 
disconnected diagrams and connected diagrams,  and determined 
the rank, which she called the $\mathbb{Z}_2$-dimension, 
of the $\mathbb{Z}_2$-submodule  
of kernel solutions 
for the homomorphism induced from region crossing changes  
on 
diagrams. 
The ranks obtained in Theorem \ref{thm;rank} and \ref{thm;rankS} 
are same values as their ranks. 
If we transpose their incidence matrix, 
it coincides with the definite region choice matrix of the single counting rule 
up to permutations of rows and columns. 
This transposed matrix also coincides with the alternating region choice matrix of the single counting rule 
modulo $2$ up to permutations of rows and columns. 
Hence Theorem \ref{thm;rank} and \ref{thm;rankS} are integral extensions of their results. 

\section{Standard kernel solutions of the double counting rule}\label{sect;basis}


In \cite{hashizume2013, hashizume2015}, 
Hashizume studied structures of 
the $\mathbb{Z}_2$-homomorphism 
induced by region crossing changes on link diagrams. 
Particularly, 
she gave a basis of the kernel of the homomorphism 
on an irreducible link diagram. 
In this section, 
we observe the kernels of the $\mathbb{Z}$-homomorphisms 
$\Phi _{a2}$ and $\Phi _{d2}$ given in Section \ref{sect;kernel}.

\begin{lemma}\label{lem;constKer}
On any link diagram with at least one crossing, 
assigning a same integer to all regions 
gives a kernel solution for 
an alternating region choice matrix of the double counting rule. 
\end{lemma}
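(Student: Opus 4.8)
The plan is to verify directly that the vector $\mathbf{u}\in\mathbb{Z}^{n+d+1}$ all of whose components equal a fixed integer $t$ satisfies $A_{a2}(D)\mathbf{u}=\mathbf{0}$. Writing $a^{(a2)}_{ij}$ for the entries of $A_{a2}(D)$, the $i$-th component of $A_{a2}(D)\mathbf{u}$ is $\sum_j a^{(a2)}_{ij}\,t = t\sum_j a^{(a2)}_{ij}$, so it suffices to prove that every row of $A_{a2}(D)$ has entries summing to $0$. This reduces the lemma to a purely local computation at each crossing.

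First I would fix a crossing $x_i$ and examine the regions touching it, which are exactly the regions contributing nonzero entries to the $i$-th row. By the definition of $a^{(a2)}_{ij}$ (Table 1), these contributions are governed by the type of corner: the two corners of the form $\diaPosTouchL$ or $\diaPosTouchR$ carry coefficient $+1$, while the two corners of the form $\diaNegTouchT$ or $\diaNegTouchB$ carry coefficient $-1$. When the four regions around $x_i$ are distinct, the row therefore contains two entries $+1$ and two entries $-1$, and the row sum is $(+1)+(+1)+(-1)+(-1)=0$.

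Next I would treat a reducible crossing, which is the only place where the double counting rule differs from a naive count and hence the step I expect to require the most care. At a reducible crossing the two coinciding corners are non-adjacent, so they are either both positive or both negative. If a single region occupies both positive corners ($\diaPosTouchLR$), its entry is $+2$ while the two negative corners give entries $-1$ each, so the $+2$ is exactly cancelled; symmetrically, if one region occupies both negative corners ($\diaNegTouchTB$) its entry is $-2$, cancelled by the two $+1$ entries of the positive corners. In either case the row sum is again $0$. Thus every row of $A_{a2}(D)$ sums to $0$, giving $A_{a2}(D)\mathbf{u}=\mathbf{0}$ and proving the lemma.

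Alternatively, one may deduce the statement from Lemma \ref{lem;AlexindKer}: adding a constant to every index of an Alexander numbering again yields an Alexander numbering, hence another kernel solution, and since the set of kernel solutions is the $\mathbb{Z}$-submodule $\ker A_{a2}(D)$, the difference of these two kernel solutions---the constant vector---also lies in the kernel. I would present the row-sum argument as the main proof since it is self-contained, and note this second viewpoint as a remark.
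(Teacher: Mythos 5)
Your proof is correct and uses essentially the same argument as the paper: the paper's one-line proof observes that each crossing is touched by four corners with alternating signs, so the row sum is $p-p+p-p=0$, which is exactly your row-sum computation (your careful treatment of the reducible case and the $\pm 2$ entries is implicit in the paper's corner count, since the double counting rule simply records two corners of the same region). The alternative derivation you sketch from Lemma \ref{lem;AlexindKer} is also valid but unnecessary here.
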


\begin{proof}
Any crossing is touched by four corners of regions, 
and we have $p-p+p-p=0$ for any integer $p$. 
\end{proof}

We denote by $\mathbf{u}_{\infty}$ the kernel solution assigning $1$ to all regions, 
as illustrated in Figure \ref{Fig;u_infty}. 

\begin{figure}[htbp]
\begin{center}
\includegraphics[height=3.5cm,clip]{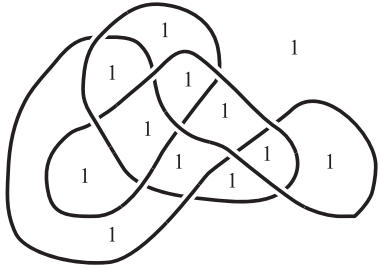}
\end{center}
\caption{The kernel solution $\mathbf{u}_{\infty}$.}
\label{Fig;u_infty}
\end{figure}

Let $D$ be an oriented link diagram with ordered link components, 
and $D_i$ be a sub-diagram of $D$ representing $i$-th link component, $i=1, \cdots , l$. 
If the oriented link diagram $D$ is a diagram on the plane $\mathbb{R}^2$, 
we may denote the unbounded region by $R_{\infty}(D)$.  
If $D$ is a diagram on the sphere $S^2=\mathbb{R}^2\cup \{ \infty \}$, 
we may denote the region including the infinite point by $R_{\infty}(D)$. 
For each sub-diagram $D_i$, we 
denote by $\mathbf{u}_i$ 
the kernel solution obtained by Lemma \ref{lem;compAlex} from 
a componentwise Alexander numbering associated with $D_i$ 
such that the region $R_{\infty}(D)$  is assigned $0$.   
We shall call $\mathbf{u}_i$ the \emph{standard kernel solution associated with} $D_i$.
Figure \ref{Fig;standardker}, same as Figure \ref{Fig;compAlex}, gives 
examples of standard kernel solutions 
on the same link diagram as Figure \ref{Fig;u_infty} and \ref{Fig;arcfixAZRCker}. 
The kernel solution given by an Alexander numbering for $D$ is equal to 
$\displaystyle r\mathbf{u}_{\infty}+\sum _{i=1}^l \mathbf{u}_i$ 
where the region $R_{\infty}(D)$ is assigned the integer $r$ in the Alexander numbering.

\begin{figure}[htbp]
\begin{center}
\includegraphics[height=3.5cm,clip]{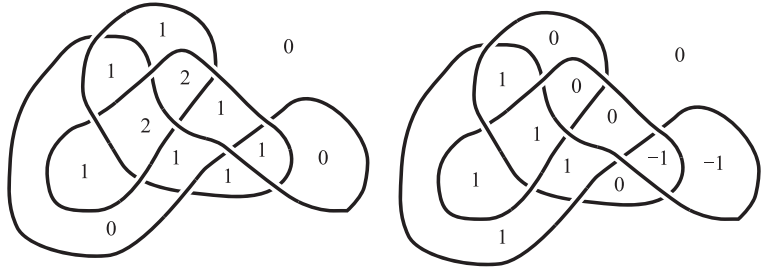}
\end{center}
\caption{The standard kernel solutions $\mathbf{u}_1$ and $\mathbf{u}_2$.}
\label{Fig;standardker}
\end{figure}

\begin{theorem}\label{thm;KerBasis}
Let $D$ be an oriented link diagram with $l$ ordered link components and at least one crossing, 
and $R_{\infty}=R_{\infty}(D)$ be the above region.  
The set of the 
above kernel solutions 
$\mathbf{u}_1, \cdots , \mathbf{u}_l$,  and $\mathbf{u}_{\infty}$ 
is a basis of the kernel of the homomorphism 
induced by the alternating integral region choice problem of double counting rule. 
\end{theorem}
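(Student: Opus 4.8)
The plan is to establish the two defining properties of a basis for the kernel of $\Phi_{a2}(D)$ separately: that the $l+1$ vectors $\mathbf{u}_1,\dots,\mathbf{u}_l,\mathbf{u}_{\infty}$ span this kernel over $\mathbb{Z}$, and that they are $\mathbb{Z}$-linearly independent. That each of them is a kernel solution is already supplied by Lemma~\ref{lem;compAlex} and Lemma~\ref{lem;constKer}, so the real content lies in spanning and independence. The central tool I would introduce is, for any kernel solution $\mathbf{u}=\{u_R\}_R$ and any arc $\gamma$ of $D$, the \emph{jump} $\delta_\gamma(\mathbf{u})=u_L-u_R$, where $L$ and $R$ are the regions on the left and the right of $\gamma$ with respect to the orientation of $D$.

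The key step, which I expect to carry the real weight, is to show that $\delta_\gamma(\mathbf{u})$ depends only on the component $D_i$ carrying $\gamma$, not on the particular arc. To see this I would trace a strand of $D_i$ straight through a crossing $x$: if the four regions around $x$ are $R_1,R_2,R_3,R_4$ clockwise, then $A_{a2}(D)\mathbf{u}=\mathbf{0}$ forces the alternating sum $u_{R_1}-u_{R_2}+u_{R_3}-u_{R_4}=0$ at $x$, which is exactly the identity underlying Lemma~\ref{lem;AlexindKer} and is unchanged by the sign and orientation of $x$ and still valid at a reducible crossing where one region is counted twice. A direct comparison of the left/right regions of the strand before and after $x$ then shows that the jumps across the two arcs meeting at $x$ coincide; since this holds at every crossing the strand passes through, $\delta_\gamma(\mathbf{u})$ is a single integer $m_i(\mathbf{u})$ for all arcs $\gamma$ on $D_i$. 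Applying the same computation to the standard solutions records the normalization I will need: $\delta_\gamma(\mathbf{u}_i)=1$ for $\gamma$ on $D_i$ and $\delta_\gamma(\mathbf{u}_i)=0$ for $\gamma$ on $D_j$ with $j\neq i$, because the componentwise Alexander numbering associated with $D_i$ takes the same value on both sides of any arc of the other components.

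With this in hand, spanning is immediate: given an arbitrary kernel solution $\mathbf{u}$, I would form $\mathbf{u}'=\mathbf{u}-\sum_{i=1}^l m_i(\mathbf{u})\,\mathbf{u}_i$. By the normalization above, $\delta_\gamma(\mathbf{u}')=0$ for every arc $\gamma$, so $\mathbf{u}'$ assigns equal values to any two regions sharing an arc; since the region-adjacency graph of a diagram on $\mathbb{R}^2$ or $S^2$ is connected (a generic path between two regions realizes a chain of arc-adjacent regions), $\mathbf{u}'$ is constant, hence equal to $u'_{R_\infty}\mathbf{u}_{\infty}$. As every $\mathbf{u}_i$ vanishes on $R_\infty$, we get $u'_{R_\infty}=u_{R_\infty}$, so $\mathbf{u}=u_{R_\infty}\mathbf{u}_{\infty}+\sum_i m_i(\mathbf{u})\,\mathbf{u}_i$ with integer coefficients. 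For independence, suppose $r\mathbf{u}_{\infty}+\sum_i m_i\mathbf{u}_i=\mathbf{0}$: evaluating at $R_\infty$ gives $r=0$, and computing the jump across an arc of $D_j$ gives $m_j=0$ for each $j$. Thus the $l+1$ vectors span the kernel and are independent, so they form a $\mathbb{Z}$-basis, consistently with the rank $l+1$ recorded in Theorem~\ref{thm;rank}. The main obstacle is the invariance of the jump, where care is needed with the sign and orientation conventions in $A_{a2}(D)$ and with the reducible crossings.
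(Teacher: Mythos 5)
Your proof is correct, but it takes a genuinely different route from the paper's. The paper proves linear independence by induction on $l$ (picking a point $p$ on $\partial R_{\infty}$ lying on $D_l$ and observing that the region $R_p$ is assigned $\pm 1$ by $\mathbf{u}_l$ and $0$ by the other standard solutions), and then obtains spanning by invoking Theorem~\ref{thm;rank} — whose proof is a separate, lengthy Reidemeister-move computation — to conclude that any kernel solution is a $\mathbb{Q}$-linear combination of the $l+1$ vectors, after which a second induction on $l$ upgrades the rational coefficients to integers. Your argument instead isolates the jump $\delta_\gamma(\mathbf{u})=u_L-u_R$ and proves it is constant along each component directly from the crossing relation $\pm(u_{R_1}-u_{R_2}+u_{R_3}-u_{R_4})=0$; this relation does hold at every crossing for a kernel solution of $A_{a2}(D)$ (including reducible crossings, where the double counting rule puts the repeated region in two opposite, hence like-signed, positions), and the left/right regions of a strand before a crossing and after it occupy alternating positions in the clockwise cyclic order, so the jump is indeed preserved. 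Subtracting $\sum_i m_i(\mathbf{u})\mathbf{u}_i$ then kills all jumps and connectivity of the region-adjacency graph forces the remainder to be a multiple of $\mathbf{u}_{\infty}$, giving an explicit integral decomposition in one stroke. What your approach buys is self-containedness: it does not use Theorem~\ref{thm;rank} at all, and in fact it independently re-derives that the kernel is free of rank $l+1$; it also produces the coefficients $m_i(\mathbf{u})$ and $u_{R_\infty}$ explicitly rather than by induction. What the paper's approach buys is uniformity: the same induction scheme is reused verbatim for the definite case (Theorem~\ref{thm;KerBasisD}), and the rank theorem it leans on is needed elsewhere anyway. The only point deserving care in your write-up is the sign normalization $\delta_\gamma(\mathbf{u}_i)=1$ on arcs of $D_i$, which holds under the paper's (Alexander's) convention that the left index exceeds the right by one; with the opposite convention the jump would be $-1$, which changes nothing essential.
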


\begin{proof}
For the linear independence of $\mathbf{u}_1, \cdots , \mathbf{u}_l, \mathbf{u}_{\infty}$, 
it is sufficient to prove 
that the standard kernel solutions are linearly independent, 
since the region $R_{\infty}$ is assigned $1$ by $\mathbf{u}_{\infty}$ 
and $0$ by $\mathbf{u}_i$, $i=1, \cdots , l$. 
We use an induction on $l$. 
If $D$ is a knot diagram, the standard kernel solution associated with $D$ 
has at least one component equal to $1$ or $-1$. 
Then it is linearly independent. 
We assume that $l\geq 2$ and that 
the standard kernel solutions 
are linearly independent on any oriented link diagram with less components than $l$. 
Let $D$ be an oriented link diagram with $l$ components.
We take a point $p$ on $\partial R_{\infty}(D)$ except crossings. 
We may assume that $p$ lies on $l$-th link component diagram $D_l$. 
We suppose $\displaystyle \sum _{i=1}^l n_i \mathbf{u}_i =\mathbf{0}$, $n_i \in \mathbb{Z}$.  
Let $R_p$ be a region of $D$ with $p\in \partial R_p$ and $R_p\neq R_{\infty}$. 
The region $R_p$ is assigned $1$ or $-1$ by $\mathbf{u}_l$ 
and $0$ by each of the other standard kernel solutions. 
Then we have $n_l=0$ and  $\displaystyle \sum _{i=1}^{l-1} n_i \mathbf{u}_i =\mathbf{0}$. 
On the diagram obtained from $D$ ignoring $D_l$, 
the standard kernel solutions are linearly independent by the assumption of the induction.  
Then $n_1, \cdots , n_{l-1}$ should be $0$. 
Hence the standard kernel solutions on $D$ are linearly independent. 

Let  $\mathbf{x}$ be a kernel solution of  the homomorphism $\Phi _{a2}(D)$. 
By Theorem \ref{thm;rank}, 
the rank 
of the kernel of the homomorphism $\Phi _{a2}(D)$ is $l+1$. 
Then $\mathbf{x}, \mathbf{u}_1, \cdots , \mathbf{u}_l, \mathbf{u}_{\infty}$ are linearly dependent, 
since $\mathbf{u}_1, \cdots , \mathbf{u}_l, \mathbf{u}_{\infty}$ are linearly independent. 
Hence 
$\displaystyle \mathbf{x}=\sum _{i=1}^l q_i\mathbf{u}_i+q_{\infty}\mathbf{u}_{\infty}$ holds  
for certain rational numbers $q_1, \cdots , q_l, q_{\infty}$. 
We show that $q_1, \cdots , q_l, q_{\infty}$ are integers.  
The region $R_{\infty}$ is assigned $1$ by $\mathbf{u}_{\infty}$ 
and $0$ by $\mathbf{u}_i$, $i=1, \cdots , l$.  
Hence 
$q_{\infty}$ is an integer since the components of $\mathbf{x}$ are integers. 
Then it is sufficient to show that $q_1, \cdots , q_l$ are integers 
if the all components of $\displaystyle \sum _{i=1}^l q_i \mathbf{u}_i$ are integers. 
We use an induction on $l$. 
If $D$ is a knot diagram, the standard kernel solution associated with $D$ 
has at least one component equal to $1$ or $-1$. 
Then $q_1\in \mathbb{Z}$ holds. 
We assume that $l\geq 2$ and that 
the desired claim holds for any oriented link diagram with less components than $l$. 
Let $D$ be an oriented link diagram with $l$ components.
The above region $R_p$ is assigned $1$ or $-1$ by $\mathbf{u}_l$ 
and $0$ by each of the other standard kernel solutions. 
Hence $q_l\in \mathbb{Z}$ holds  
and all components of $\displaystyle \sum _{i=1}^{l-1} q_i \mathbf{u}_i$ are integers. 
We apply the assumption of the induction to the diagram obtained from $D$ ignoring $D_l$.  
Then we have $q_1, \cdots , q_{l-1}\in \mathbb{Z}$. 

Therefore 
$\mathbf{x}$ is a linear combination of $\mathbf{u}_1, \cdots , \mathbf{u}_l, \mathbf{u}_{\infty}$ over $\mathbb{Z}$. 
Then the set of the kernel solutions 
$\mathbf{u}_1, \cdots , \mathbf{u}_l$,  and $\mathbf{u}_{\infty}$ 
is a basis of the kernel of the homomorphism $\Phi _{a2}(D)$. 
\end{proof}

For the above link diagram $D$, 
we fix a checkerboard coloring. 
We apply Lemma \ref{lem;KerAtoD} to the above basis 
$\mathbf{u}_1, \cdots , \mathbf{u}_l, \mathbf{u}_{\infty}$.  
Then we obtain 
kernel solutions 
of the definite integral region choice problem of double counting rule. 
We denote them by 
$\bar{\mathbf{u}}_1, \cdots , \bar{\mathbf{u}}_l, \bar{\mathbf{u}}_{\infty}$. 

\begin{theorem}\label{thm;KerBasisD}
Let $D$ be an oriented link diagram with $l$ ordered link components and at least one crossing, 
and $R_{\infty}=R_{\infty}(D)$ be the above region.  
The set of the above kernel solutions $\bar{\mathbf{u}}_1, \cdots , \bar{\mathbf{u}}_l$,  and $\bar{\mathbf{u}}_{\infty}$ 
is a basis of the kernel of the homomorphism 
induced by the definite integral region choice problem of double counting rule. 
\end{theorem}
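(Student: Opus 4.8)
The plan is to recognize the passage from alternating to definite kernel solutions described in Lemma \ref{lem;KerAtoD} as a single $\mathbb{Z}$-linear automorphism of $\mathbb{Z}^{n+d+1}$, and then to deduce the theorem from Theorem \ref{thm;KerBasis} together with the general fact that an automorphism carries a basis to a basis. This is cleaner than repeating the delicate induction of Theorem \ref{thm;KerBasis}, since all the combinatorial work has already been done there.

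First I would fix once and for all the checkerboard coloring used to define $\bar{\mathbf{u}}_1, \ldots, \bar{\mathbf{u}}_l, \bar{\mathbf{u}}_{\infty}$, and introduce the diagonal operator $E\colon \mathbb{Z}^{n+d+1}\to \mathbb{Z}^{n+d+1}$ whose entry on the coordinate of a region $R$ is multiplication by $(-1)^{c_R}$. Since every diagonal entry is $\pm 1$, the matrix of $E$ lies in $\mathrm{GL}_{n+d+1}(\mathbb{Z})$; indeed $E^2=\mathrm{id}$, so $E$ is a $\mathbb{Z}$-module automorphism equal to its own inverse. It is precisely because a single coloring is used throughout that $E$ is well defined independently of the chosen kernel solution, and this is the one point where I would take care.

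Next I would pin down the restriction of $E$ to the relevant kernels. By Lemma \ref{lem;KerAtoD}, $E$ carries every element of $\{\mathbf{u}\mid A_{a2}(D)\mathbf{u}=\mathbf{0}\}$ into $\{\mathbf{u}\mid A_{d2}(D)\mathbf{u}=\mathbf{0}\}$, and by Lemma \ref{lem;KerDtoA} for the double counting rule $E$ carries the latter back into the former. Because $E^2=\mathrm{id}$, these two inclusions force $E$ to restrict to a $\mathbb{Z}$-module isomorphism from the kernel of $\Phi_{a2}(D)$ onto the kernel of $\Phi_{d2}(D)$.

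Finally I would invoke Theorem \ref{thm;KerBasis}, which states that $\mathbf{u}_1, \ldots, \mathbf{u}_l, \mathbf{u}_{\infty}$ is a basis of the kernel of $\Phi_{a2}(D)$. Applying the isomorphism $E$ and observing that, by the very definition of the $\bar{\mathbf{u}}$'s, one has $E\mathbf{u}_i=\bar{\mathbf{u}}_i$ and $E\mathbf{u}_{\infty}=\bar{\mathbf{u}}_{\infty}$, I would conclude that $\bar{\mathbf{u}}_1, \ldots, \bar{\mathbf{u}}_l, \bar{\mathbf{u}}_{\infty}$ is a basis of the kernel of $\Phi_{d2}(D)$. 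The only genuine obstacle is the bookkeeping that confirms $E$ is an automorphism over $\mathbb{Z}$ and that Lemmas \ref{lem;KerAtoD} and \ref{lem;KerDtoA} together make its restriction a bijection of kernels; once that is in place the result is immediate, with no need to re-run the induction on the number of components.
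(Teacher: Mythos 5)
Your proposal is correct, and it takes a genuinely different route from the paper. The paper proves Theorem \ref{thm;KerBasisD} by repeating the template of Theorem \ref{thm;KerBasis}: it first asserts linear independence of $\bar{\mathbf{u}}_1, \cdots , \bar{\mathbf{u}}_l, \bar{\mathbf{u}}_{\infty}$ ``by the similar argument,'' then invokes Theorem \ref{thm;rank} to get that the kernel of $\Phi_{d2}(D)$ has rank $l+1$, concludes that any kernel element is a rational linear combination of the $\bar{\mathbf{u}}$'s, and finally re-runs the integrality induction to show the coefficients lie in $\mathbb{Z}$. You instead package Lemmas \ref{lem;KerAtoD} and \ref{lem;KerDtoA} into a single diagonal involution $E\in \mathrm{GL}_{n+d+1}(\mathbb{Z})$ and use $E^2=\mathrm{id}$ to upgrade the two inclusions $E(\ker \Phi_{a2})\subseteq \ker \Phi_{d2}$ and $E(\ker \Phi_{d2})\subseteq \ker \Phi_{a2}$ to an isomorphism of kernels, after which the basis is transported for free from Theorem \ref{thm;KerBasis}. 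Your key steps all check out: the coloring is fixed once, so $E$ is a well-defined signed identity matrix; the chain $\ker \Phi_{a2}=E^2(\ker \Phi_{a2})\subseteq E(\ker \Phi_{d2})\subseteq \ker \Phi_{a2}$ forces equality throughout; and $\bar{\mathbf{u}}_i=E\mathbf{u}_i$ by the paper's own definition of the $\bar{\mathbf{u}}$'s. What your approach buys is economy and a sharper structural statement --- the two kernels are identified by an explicit change of coordinates over $\mathbb{Z}$, so neither the rank computation for $\Phi_{d2}(D)$ nor the integrality induction needs to be revisited. What the paper's approach buys is uniformity: the same argument pattern is reused verbatim for both counting conventions without introducing the operator $E$, at the cost of some repetition.
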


\begin{proof}
The linear independence of $\bar{\mathbf{u}}_1, \cdots , \bar{\mathbf{u}}_l, \bar{\mathbf{u}}_{\infty}$ 
is shown by the similar argument to that for $\mathbf{u}_1, \cdots , \mathbf{u}_l, \mathbf{u}_{\infty}$ 
in the proof of Theorem \ref{thm;KerBasis}. 
Let $\mathbf{y}$ be a kernel solution of  the homomorphism $\Phi _{d2}(D)$. 
By Theorem \ref{thm;rank}, 
the rank 
of the kernel of the homomorphism $\Phi _{d2}(D)$ is $l+1$. 
Then $\mathbf{y}, \bar{\mathbf{u}}_1, \cdots , \bar{\mathbf{u}}_l, \bar{\mathbf{u}}_{\infty}$ 
are linearly dependent, 
since $\bar{\mathbf{u}}_1, \cdots , \bar{\mathbf{u}}_l, \bar{\mathbf{u}}_{\infty}$ are linearly independent. 
By the similar argument to 
that for a kernel solution $\mathbf{x}$ of $\Phi _{a2}(D)$ in the proof of Theorem \ref{thm;KerBasis}, 
$\mathbf{y}$ is a linear combination of $\bar{\mathbf{u}}_1, \cdots , \bar{\mathbf{u}}_l, \bar{\mathbf{u}}_{\infty}$ 
over $\mathbb{Z}$. 
Therefore the set of the kernel solutions $\bar{\mathbf{u}}_1, \cdots , \bar{\mathbf{u}}_l$,  and $\bar{\mathbf{u}}_{\infty}$ 
is a basis of the kernel of the homomorphism $\Phi _{d2}(D)$. 
\end{proof}

Figure \ref{Fig;kerbasisDZRC} gives an example of the basis obtained by Theorem \ref{thm;KerBasisD}. 

\begin{figure}[htbp]
\begin{center}
\includegraphics[height=2.8cm,clip]{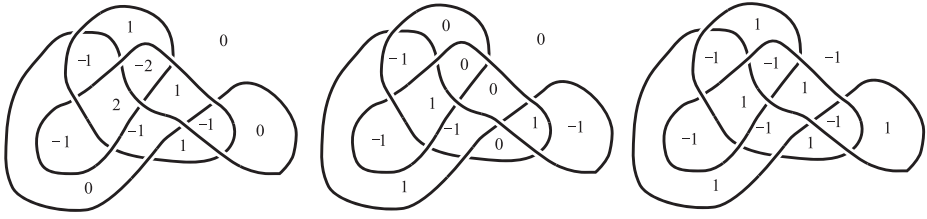}
\end{center}
\caption{The basis $\bar{\mathbf{u}}_1$, $\bar{\mathbf{u}}_2$, $\bar{\mathbf{u}}_{\infty}$.}
\label{Fig;kerbasisDZRC}
\end{figure}

Theorem \ref{thm;KerBasis} and  \ref{thm;KerBasisD} 
are extensions of the result 
due to Hashizume \cite{hashizume2013} 
on a region crossing change. 
Her basis of the kernel of $\mathbb{Z}_2$-homomorphism induced by region crossing changes 
is same as 
the basis given in Theorem \ref{thm;KerBasis} and the basis given in Theorem \ref{thm;KerBasisD} modulo $2$.

\section{Images of homomorphisms induced by integral region choice problems}\label{sect;image}


Let $D$ be a link diagram with at least one crossing. 
For each $i=1, 2$, 
the system of linear equations $A_{di}(D)\mathbf{u}+\mathbf{c}=\mathbf{0}$ 
(resp. $A_{ai}(D)\mathbf{u}+\mathbf{c}=\mathbf{0}$) is solvable 
if and only if 
$\mathbf{c}$ lies in the image of the homomorphism $\Phi _{di}(D)$ (resp. $\Phi _{ai}(D)$). 
In this section, 
we discuss about 
the images of the homomorphisms induced by integral region choice problems. 

By Theorem \ref{thm;aharasuzuki;mat} and \ref{thm;harada;mat}, 
the homomorphisms $\Phi _{d1}(D), \Phi _{d2}(D), \Phi _{a1}(D), \Phi _{a2}(D)$ 
defined in Section \ref{sect;kernel}
are surjective if $D$ is a knot diagram. 
Otherwise they are not surjective in general by Theorem \ref{thm;rank} and \ref{thm;rankS}. 

In \cite{chenggao}, 
Cheng and Gao proved that 
a region crosssing change on a 2-component link diagram is an unknotting operation 
if and only if the linking number is even, 
showing that 
changing two crossings of different components on a 2-component link diagram 
is represented by certain region crossing changes. 

For example, the canonical diagram of $(2,4)$-torus link 
changes to a diagram of the trivial link by one region choice. 
Otherwise there exists an equipment of integers to the crossings on this diagram 
such that the definite and alternating integral region choice problem does not have any solution. 
On the canonical diagram of $(2,4)$-torus link with certain orders of crossincs and regions, 
the definite region choice matrix is 
\[
A_d=
\begin{pmatrix}
1 & 1 & 1 & 1 & 0 & 0 \\
0 & 1 & 1 & 1 & 1 & 0 \\
0 & 1 & 1 & 0 & 1 & 1 \\
1 & 1 & 1 & 0 & 0 & 1 
\end{pmatrix}
\]
and the alternating region choice matrix is 
\[
A_a=
\begin{pmatrix}
-1 & 1 & 1 & -1 & 0  & 0 \\
0  & 1 & 1 & -1 & -1 & 0 \\
0  & 1 & 1 & 0  & -1 & -1 \\
-1 & 1 & 1 & 0  & 0  & -1 
\end{pmatrix}
.\]
The system of linear equations $A_d\mathbf{u}+\mathbf{c}=\mathbf{0}$ has a solution $\mathbf{u} \in \mathbb{Z}^6$ 
if and only if $c_1-c_2+c_3-c_4=0$ holds where $c_i$ is the $i$-th component of $\mathbf{c}\in \mathbb{Z}^4$.  
The system of linear equations $A_a\mathbf{u}+\mathbf{c}=\mathbf{0}$ is solvable 
if and only if $c_1-c_2+c_3-c_4=0$ holds.  
Then in the case $(c_1, c_2, c_3, c_4)=(1,0,0,-1)$, 
any $\mathbf{u} \in \mathbb{Z}^6$ does not hold 
$A_d\mathbf{u}+\mathbf{c}=\mathbf{0}$ or $A_a\mathbf{u}+\mathbf{c}=\mathbf{0}$, 
though we have 
\[
A_d
\begin{pmatrix}
1 \\ 0 \\ 0 \\ 0 \\ 0 \\ 0
\end{pmatrix}
=
\begin{pmatrix}
1 \\ 0 \\ 0 \\ 1
\end{pmatrix}
= - 
\begin{pmatrix}
1 \\ 0 \\ 0 \\ -1
\end{pmatrix}
\in {\mathbb{Z}_2}^4 , \ \ \ 
A_a
\begin{pmatrix}
1 \\ 0 \\ 0 \\ 0 \\ 0 \\ 0
\end{pmatrix}
=
\begin{pmatrix}
-1 \\ 0 \\ 0 \\ -1
\end{pmatrix}
= - 
\begin{pmatrix}
1 \\ 0 \\ 0 \\ -1
\end{pmatrix}
\in {\mathbb{Z}_2}^4 .
\]

In \cite{hashizume2015}, 
Hashizume gave a generating system of the image of the $\mathbb{Z}_2$-homomorphism 
induced by region crossing changes on a link diagram. 
Their results include the following results. 

\begin{lemma}[\cite{chenggao, hashizume2015}]\label{lem;Im2comp}
Let $D$ be a connected diagram of two-component link with $n$ crossings. 
We take two distinct crossings $x,y$ of $D$ of arcs in different link components. 
There exist $\mathbf{v}_{xy} \in \mathbb{Z}_2^{n+2}$ such that 
any components of $A_{d1}(D)\mathbf{v}_{xy} \in {\mathbb{Z}_2}^{n}$ are $0$ 
but the components of $A_{d1}(D)\mathbf{v}_{xy} \in {\mathbb{Z}_2}^{n}$ to $x$ and $y$ are $1$. 
\hfill $\square$
\end{lemma}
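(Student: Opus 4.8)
The plan is to produce, over $\mathbb{Z}$, a vector $\mathbf{u}\in\mathbb{Z}^{n+2}$ with $A_{d1}(D)\mathbf{u}=\mathbf{e}_y+s\,\mathbf{e}_x$ for some integer $s$, to force $s$ to be odd by a parity obstruction, and then to reduce modulo $2$. Here $\mathbf{e}_x,\mathbf{e}_y\in\mathbb{Z}^{n}$ denote the standard basis vectors corresponding to the crossings $x$ and $y$, and $n\geq 2$ since $x,y$ are distinct.

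First I would record the parity obstruction, which is the conceptual heart of the statement. Let $\mathbf{m}\in\mathbb{Z}_2^{n}$ be the indicator of the mixed crossings, those lying on arcs of both components. I claim $\mathbf{m}^{T}A_{d2}(D)\equiv\mathbf{0}\pmod 2$. Indeed, fix a region $R_j$ and walk once around $\partial R_j$; the boundary strand switches between the two components exactly at the corners of mixed crossings on $\partial R_j$, and since the walk is a closed loop the number of switches is even. As $(A_{d2})_{ij}$ counts corners (a twice-touched crossing contributes $2\equiv 0$), the quantity $\sum_i (A_{d2})_{ij}m_i$ is congruent to this even switch-count modulo $2$, proving the claim. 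Consequently every vector in the image of $A_{d2}(D)$ pairs trivially with $\mathbf{m}$ modulo $2$.

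Next I would splice at the mixed crossing $x$. Because the two arcs at $x$ lie on different components, the oriented smoothing merges them into a single component, so $D_x$ is a diagram of a knot with $n-1\geq 1$ crossings, and $y$ is now a crossing of arcs of that one component. Applying Theorem \ref{thm;add1DZRC} to $D_x$ and $y$ gives $\mathbf{w}$ with $A_{d2}(D_x)\mathbf{w}=\mathbf{e}_y$. Unsplicing at $x$ as in the proof of Theorem \ref{thm;add1DZRC}, I assign to the two regions of $D$ that the smoothing merges the common value of the merged region, obtaining $\tilde{\mathbf{w}}\in\mathbb{Z}^{n+2}$. Since the regions around every crossing $c\neq x$ are unaffected, $\bigl(A_{d2}(D)\tilde{\mathbf{w}}\bigr)_c=\bigl(A_{d2}(D_x)\mathbf{w}\bigr)_c$, so $A_{d2}(D)\tilde{\mathbf{w}}=\mathbf{e}_y+s\,\mathbf{e}_x$, where $s$ is the sum of the values of $\tilde{\mathbf{w}}$ over the four corners of $x$. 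Pairing with $\mathbf{m}$ and using the obstruction gives $1+s\equiv 0\pmod 2$, hence $s$ is odd and $A_{d2}(D)\tilde{\mathbf{w}}\equiv\mathbf{e}_x+\mathbf{e}_y\pmod 2$.

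Finally I would pass from the double to the single counting rule exactly as in the proof of Theorem \ref{thm;aharasuzuki;mat}(1): using the vectors $\mathbf{v}_{y'}$ of Lemma \ref{lem;add1DZRCred} for the reducible crossings $y'\in\mathcal{X}_j$ twice-touched by each region $R_j$, and writing $\tilde{\mathbf{w}}=\sum_j \tilde w_j\mathbf{r}_j$, I set $\mathbf{u}=\tilde{\mathbf{w}}+\sum_j \tilde w_j\sum_{y'\in\mathcal{X}_j}\mathbf{v}_{y'}$, which satisfies $A_{d1}(D)\mathbf{u}=A_{d2}(D)\tilde{\mathbf{w}}=\mathbf{e}_y+s\,\mathbf{e}_x$. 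Reducing $\mathbf{u}$ modulo $2$ then yields the desired $\mathbf{v}_{xy}\in\mathbb{Z}_2^{n+2}$. The main obstacle is the parity step: everything hinges on $\mathbf{m}$ lying in the mod-$2$ left kernel of $A_{d2}(D)$, equivalently that no region choice flips a single mixed crossing, which is precisely the mod-$2$ linking-number obstruction; the splice/unsplice region bookkeeping and the double-to-single conversion are then routine assemblies of the earlier lemmas.
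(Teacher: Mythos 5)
Your proposal is correct, but it reaches the statement by a genuinely different route from the paper. The paper does not prove Lemma \ref{lem;Im2comp} directly: it quotes it from Cheng--Gao and Hashizume and then observes that it is the modulo $2$ reduction of Corollary \ref{cor;Im2compAZRC}, whose underlying construction (Lemma \ref{lem;Im2compAZRC}) splices at \emph{both} mixed crossings $x$ and $y$, takes a componentwise Alexander numbering of the resulting two-component diagram, and unsplices, so that the image vector is pinned down exactly as $\varepsilon_x\mathbf{e}_x\pm\varepsilon_y\mathbf{e}_y$ over $\mathbb{Z}$. You instead splice only at $x$, reduce to the knot-diagram case so that Theorem \ref{thm;add1DZRC} hits $\mathbf{e}_y$ on the nose, and accept an undetermined integer coefficient $s$ at $x$, which you then control modulo $2$ by showing that the indicator $\mathbf{m}$ of mixed crossings lies in the mod-$2$ left kernel of $A_{d2}(D)$. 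That left-kernel claim is correct and is the essential point: the boundary walk of a region passes at each corner from one strand of the crossing to the other (adjacent edges around a crossing belong to different strands), so component switches occur exactly at mixed corners, and a closed walk forces their number to be even; this is precisely the mod-$2$ linking-number obstruction of Cheng--Gao. The remaining steps (unsplicing bookkeeping, and the passage from $A_{d2}$ to $A_{d1}$, which is just Lemma \ref{lem;Im2to1}(1)) are routine and correctly assembled. The trade-off is clear: your argument is shorter and more conceptual for the $\mathbb{Z}_2$ statement, but it only determines the coefficient at $x$ up to parity, so unlike the paper's double-splice construction it would not yield the integral refinements (Lemma \ref{lem;Im2compAZRC} and Theorem \ref{thm;ImBasis2compAZRC}) that the paper needs later.
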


\begin{theorem}[\cite{hashizume2015}]\label{thm;ImBasis2comp}
Let $D$ be a connected diagram of two-component link with $n$ crossings. 
The image of the $\mathbb{Z}_2$-homomorphism induced by region crossing changes 
is generated by the elements in ${\mathbb{Z}_2}^n$ of the following two types: 
\begin{enumerate}
\item 
any components are $0$ 
but the only one component corresponding to a crossing in same link component is $1$; 
\item 
any components are $0$ 
but the only two components corresponding to two distinct crossings of distinct link components are $1$. 
\end{enumerate}
\hfill $\square$
\end{theorem}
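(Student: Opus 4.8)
The plan is to identify the $\mathbb{Z}_2$-homomorphism induced by region crossing changes with the reduction of $A_{d1}(D)$ modulo $2$, and then to combine a membership step with a dimension count. First I would split the $n$ crossings of $D$ into the $n_s$ \emph{self-crossings}, whose two arcs lie in the same link component, and the $n_m$ \emph{mixed crossings}, whose two arcs lie in different components, so that $n=n_s+n_m$. Since $D$ is connected the two components must meet, so $n_m\geq 1$. For a crossing $x$ write $\mathbf{e}_x\in{\mathbb{Z}_2}^{n}$ for the vector whose only nonzero entry is a $1$ in the coordinate of $x$; then a generator of type (1) is $\mathbf{e}_x$ for a self-crossing $x$, and a generator of type (2) is $\mathbf{e}_x+\mathbf{e}_y$ for distinct mixed crossings $x,y$.

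Next I would verify that both families lie in the image. For type (2) this is precisely Lemma \ref{lem;Im2comp}. For type (1) some care is needed, because region crossing changes use the single counting rule while the convenient existence statement is phrased for the double counting rule over $\mathbb{Z}$: by Theorem \ref{thm;add1DZRC}, a self-crossing $x$ satisfies $\mathbf{e}_x\in\mathrm{Im}\,\Phi_{d2}(D)$ over $\mathbb{Z}$, and by Theorem \ref{thm;image} one has $\mathrm{Im}\,\Phi_{d2}(D)=\mathrm{Im}\,\Phi_{d1}(D)$, so $\mathbf{e}_x=A_{d1}(D)\mathbf{v}_x$ for some $\mathbf{v}_x\in\mathbb{Z}^{n+2}$. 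Reducing this identity modulo $2$ places $\mathbf{e}_x$ in the $\mathbb{Z}_2$-image. Hence every generator of types (1) and (2) lies in the image.

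Finally I would match dimensions. The type (1) generators span the $n_s$-dimensional coordinate subspace supported on the self-crossings, while the generators $\mathbf{e}_x+\mathbf{e}_y$ of type (2) span the even-weight subspace supported on the $n_m$ mixed crossings, which has dimension $n_m-1$. These two subspaces are carried on disjoint coordinates, so together the generators span a subspace $V$ with $\dim V=n_s+(n_m-1)=n-1$. On the other hand, for a connected two-component diagram one has $d=1$ and $l=2$, so the image has $\mathbb{Z}_2$-dimension equal to the $\mathbb{Z}_2$-rank of the incidence matrix determined by Cheng and Gao \cite{chenggao}, namely $n+d-l=n-1$ by Theorem \ref{thm;rankS}. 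Since $V$ is contained in the image and the two dimensions coincide, $V$ is the whole image, which is the claim.

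The hard part will be the membership of the type (1) generators. It is tempting to reduce Theorem \ref{thm;add1DZRC} directly modulo $2$, but $A_{d2}(D)\not\equiv A_{d1}(D)\pmod 2$ at reducible crossings, so one must pass through the equality of \emph{integral} images in Theorem \ref{thm;image} before reducing. The only other subtlety is that the relevant rank is the $\mathbb{Z}_2$-rank, which is imported from Cheng and Gao rather than read off the integral statement of Theorem \ref{thm;rankS}.
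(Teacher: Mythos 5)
Your argument is correct, but it is worth noting that the paper itself gives no proof of this statement: Theorem \ref{thm;ImBasis2comp} is imported from Hashizume \cite{hashizume2015} and closed with a $\square$. The closest the paper comes to a derivation is the remark after Theorem \ref{thm;ImBasis2compAZRC} that the modulo $2$ reduction of that integral basis theorem (for the alternating problem) implies Theorem \ref{thm;ImBasis2comp}; that route produces only the $n-1$ specific generators $\mathbf{e}_1,\dots,\mathbf{e}_{n-1}$ (the type (2) ones all sharing the distinguished crossing $x_n$) and then observes that every element of types (1) and (2) is a $\mathbb{Z}_2$-combination of these. Your route is more direct: you verify membership of all the claimed generators at once --- type (2) via Lemma \ref{lem;Im2comp}, type (1) via Theorem \ref{thm;add1DZRC} pushed through the integral image equality of Theorem \ref{thm;image} before reducing modulo $2$ --- and then close with a dimension count $n_s+(n_m-1)=n-1$ against the $\mathbb{Z}_2$-rank. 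Your handling of the two subtleties is exactly right: one cannot reduce Theorem \ref{thm;add1DZRC} modulo $2$ directly because $A_{d2}(D)\not\equiv A_{d1}(D)\pmod 2$ at reducible crossings, and the integral rank of Theorem \ref{thm;rankS} does not a priori give the $\mathbb{Z}_2$-rank. For the latter you import Cheng--Gao, which is legitimate; alternatively you could avoid the import entirely by noting that the $\mathbb{Z}_2$-rank is at most the rational rank $n-1$ (an $r\times r$ minor with odd determinant is nonzero over $\mathbb{Q}$), while your subspace $V$ already forces it to be at least $n-1$. One tiny inaccuracy that does not affect anything: for a connected two-component diagram the number of mixed crossings is even and at least $2$, not merely at least $1$.
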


We note that 
a connected diagram of two-component link 
has at least two crossings because of the Jordan curve theorem.

We extend Lemma \ref{lem;Im2comp} to the alternating integral region choice problem 
as follows, where we define 
$\varepsilon _x=1$ for a positive crossing $x$ 
and $\varepsilon _x=-1$ for a negative crossing $x$. 

\begin{lemma}
\label{lem;Im2compAZRC}
Let $D=D_1\cup D_2$ be a connected diagram 
of two-component oriented link with $n$ crossings, 
where $D_1$ and $D_2$ are sub-diagram of $D$ 
representing the first and the second components respectively. 
We take two distinct crossings $x,y$ of 
$D_1$ and $D_2$. 
We suppose that $D_2$ crosses $D_1$ from the right to the left at $x$. 
\begin{enumerate}
\item 
If $D_2$ crosses $D_1$ from the left to the right at $y$, then 
there exist $\mathbf{v}_{xy} \in \mathbb{Z}^{n+2}$ such that 
any components of $A_{a2}(D)\mathbf{v}_{xy} \in \mathbb{Z}^{n}$ are $0$ 
but the components of $A_{a2}(D)\mathbf{v}_{xy} \in \mathbb{Z}^{n}$ to $x$ and $y$ are  
$\varepsilon _x$ and $\varepsilon _y$. 
\item 
If $D_2$ crosses $D_1$ from the right to the left at $y$, then 
there exist $\mathbf{v}_{xy} \in \mathbb{Z}^{n+2}$ such that 
any components of $A_{a2}(D)\mathbf{v}_{xy} \in \mathbb{Z}^{n}$ are $0$ 
but the components of $A_{a2}(D)\mathbf{v}_{xy} \in \mathbb{Z}^{n}$ to $x$ and $y$ are  
$\varepsilon _x$ and $-\varepsilon _y$.  
\end{enumerate}
\end{lemma}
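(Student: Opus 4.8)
The plan is to produce $\mathbf{v}_{xy}$ by the splicing construction already used for Theorem \ref{thm;add1AZRC}, applied at the inter-component crossing $x$. First I would orient $D$ and splice at $x$. Since $x$ joins arcs of the two different components $D_1$ and $D_2$, the oriented smoothing merges them, so the spliced diagram $K$ has one link component and $n-1$ crossings; the crossing $y$ survives as a self-crossing of $K$. Because a connected two-component diagram has $n\ge 2$, we have $n-1\ge 1$, and Theorem \ref{thm;add1AZRC} applies to $K$ at the self-crossing $y$: it yields $\mathbf{w}$ with $A_{a2}(K)\mathbf{w}$ equal to $0$ at every crossing except $y$, where it is $1$. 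The vector $\mathbf{w}$ is of the form $\varepsilon_y\mathbf{u}'$ for a componentwise Alexander numbering $\mathbf{u}'$ (Lemma \ref{lem;compAlex}), so away from $y$ its values jump by $0$ or $\pm 1$ across each arc, according to whether that arc belongs to the numbered subdiagram; this unit-jump property is what I would exploit near $x$.

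Next I would unsplice $K$ back to $D$ at $x$, extending $\mathbf{w}$ to a vector $\mathbf{v}'$ on $D$ exactly as in the proof of Theorem \ref{thm;add1AZRC}: the single region of $K$ that splits at the recreated crossing is cut into two regions of $D$, both assigned its common value, and all other regions keep their values. Since unsplicing alters the local picture only at $x$ and the two split regions carry one value, every crossing other than $x$ and $y$ still evaluates to $0$ and $y$ still evaluates to $1$. Thus $A_{a2}(D)\mathbf{v}'$ is supported on $\{x,y\}$, with $y$-entry $1$ and some $x$-entry $V_x$. Writing $S$ for the region that merged under the smoothing at $x$ and $P,Q$ for its two neighbours there, the alternating counting rule gives $V_x=\sigma\big((u_S-u_P)+(u_S-u_Q)\big)$ for a corner sign $\sigma=\pm 1$, where $u_S,u_P,u_Q$ are the corresponding components of $\mathbf{v}'$. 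Setting $\mathbf{v}_{xy}=\pm\varepsilon_y\mathbf{v}'$, with the sign chosen so that the $x$-entry becomes $+\varepsilon_x$, will give a vector of the required shape once $V_x$ is identified.

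The main obstacle is therefore the local sign bookkeeping at the recreated crossing $x$. I expect to show that exactly one of the two smoothing arcs at $x$ belongs to the component carrying the Alexander numbering, so that one of $u_S-u_P$, $u_S-u_Q$ is $0$ and the other is $\pm\varepsilon_y$; combined with the relation between $\sigma$ and the crossing sign $\varepsilon_x$ (the same interplay that makes the recreated value equal to $\varepsilon$ in Theorem \ref{thm;add1AZRC}), this forces $V_x=\pm\varepsilon_x\varepsilon_y$, whence $|V_x|=1$. The sign, and hence the distinction between the two cases, comes from the hypothesis on how $D_2$ crosses $D_1$: at $x$ it goes from right to left, and whether it goes from left to right (case (1)) or from right to left (case (2)) at $y$ fixes the reconnection pattern of the smoothing and thus the relative orientation of the two subdiagrams of $K$, which in turn fixes the sign of the surviving jump. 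Carrying this through should give $x$-entry $\varepsilon_x$ and $y$-entry $\varepsilon_y$ in case (1), and $x$-entry $\varepsilon_x$ and $y$-entry $-\varepsilon_y$ in case (2) after the normalizing multiplication by $\pm\varepsilon_y$. As a consistency check, Theorem \ref{thm;rank} shows the rows of $A_{a2}(D)$ satisfy a single linear relation, so once the $y$-entry is known the $x$-entry is already determined; this matches the values just described and reduces the verification to getting the one sign right.
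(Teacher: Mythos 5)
Your construction is sound, and it is in substance the paper's own proof repackaged: the paper splices $D$ at \emph{both} $x$ and $y$, obtaining a two-component diagram $D_{xy}^1\cup D_{xy}^2$, takes the componentwise Alexander numbering associated with the component containing the left arc $\gamma_1$ at the $x$-site (normalized so the right and left regions of $\gamma_1$ are $0$ and $1$), and unsplices at both crossings; unwinding your invocation of Theorem \ref{thm;add1AZRC} on $K=D$-spliced-at-$x$ produces exactly that numbering. The problem is that the two points you defer with ``I expect to show'' and ``carrying this through should give'' are precisely the content of the lemma. First, the claim that exactly one of the two smoothing arcs at $x$ belongs to the numbered component is not automatic: it requires observing that the cycle $K$ traverses all of $D_2$, then all of $D_1$, passing through $y$ once in each portion, so that the oriented splice at $y$ places the two local $x$-arcs on \emph{different} resulting circles. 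Without this, the jump sum $(u_S-u_P)+(u_S-u_Q)$ could a priori be $0$ or $\pm 2$ rather than $\pm 1$, and the whole statement fails.

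Second, and more seriously, the sign of $V_x$ — equivalently, whether the normalized $y$-entry comes out as $\varepsilon_y$ or $-\varepsilon_y$ — is exactly what separates cases (1) and (2), and your proposal never computes it. The hypotheses ``$D_2$ crosses $D_1$ from the right to the left at $x$'' and the left/right direction at $y$ are never actually used; they must enter through an explicit local computation. The paper does this concretely: with the numbering normalized at the $x$-site, it determines the indices of the four regions at $y$ in each case (in case (1) the region between the two $y$-arcs receives $a$ and the left region $a+1$; in case (2) both regions adjacent to the left $y$-arc receive $a+1$) and reads off the resulting entries after unsplicing. Your closing appeal to Theorem \ref{thm;rank} cannot substitute for this: the rank statement only says the image lies in some hyperplane, and identifying the coefficients of that hyperplane's defining relation at the inter-component crossings is equivalent to already knowing the signs this lemma asserts. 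So the argument as written establishes existence of a $\mathbf{v}_{xy}$ supported correctly with entries of absolute value $1$ (modulo the first point), but not the stated case-dependent values.
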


\begin{figure}[htbp]
\begin{center}
\includegraphics[height=6.5cm,clip]{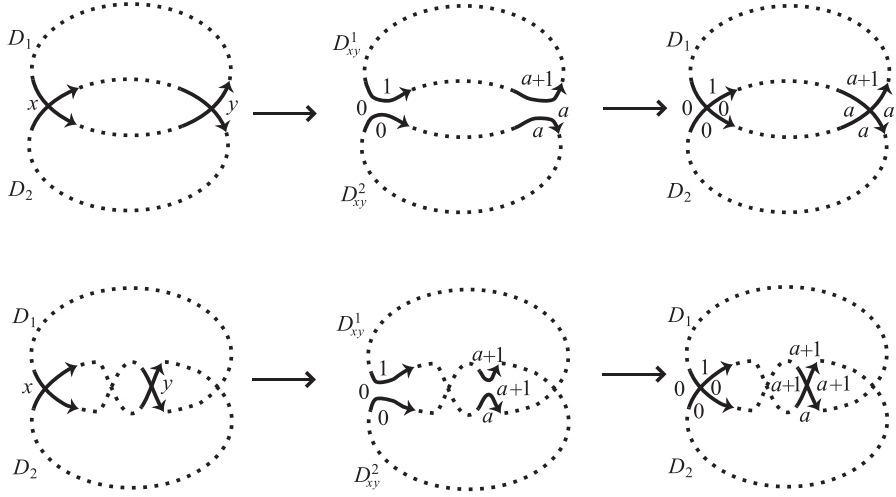}
\end{center}
\caption{Obtaining $\mathbf{v}_{xy}$ in the two cases.}
\label{Fig;2comp}
\end{figure}

\begin{proof}
We splice at $x$. 
Let $\gamma _1$ and $\gamma _2$ be oriented arcs appearing after the splice at $x$ on the obtained diagram. 
We suppose that $\gamma _1$ lies on the left of $\gamma _2$. 
We splice at $y$. 
We obtain 
a new diagram of a two-component link 
as illustrated on the middle of Figure \ref{Fig;2comp}, 
where the cases (1) and (2) are  described in the upper and lower rows respectively. 
We denote the sub-diagram of the link component including the arc $\gamma _i$ by $D_{xy}^i$, $i=1,2$. 
For the diagram $D_{xy}^1 \cup D_{xy}^2$, 
we take the componentwise Alexander numbering associated with $D_{xy}^1$
such that the right and left regions of $\gamma _1$ are assigned $0$ and $1$ respectively. 
We denote by $a$ the integer assigned to the right region of two oriented arcs which appear after the splice at $y$. 
In the case (1), $D_{xy}^1$ includes the left of these two arcs, 
then the region between the arcs is assigned $a$, 
and the left region is assigned $a+1$.  
In the case (2), $D_{xy}^1$ includes the right of these two arcs, 
then the both regions adjacent to the left arc are assigned $a+1$.  
By Lemma \ref{lem;compAlex}, 
this numbering gives 
a kernel solution for $A_{a2}(D_{xy}^1 \cup D_{xy}^2)$ 
if $D_{xy}^1 \cup D_{xy}^2$ has at least one crossing. 
We unsplice $D_{xy}^1 \cup D_{xy}^2$ at $x$ and $y$.  
We assign the same integers to all regions of $D$ as $D_{xy}^1 \cup D_{xy}^2$, 
where the two regions splitting at $x$ are assigned $0$,  
and where the two regions splitting at $y$ are assigned $a$ and $a+1$ in the case (1) and (2) respectively, 
as illustrated on the right of Figure \ref{Fig;2comp}.  
Then the obtained assignment of integers to regions is the desired $\mathbf{v}_{xy} \in \mathbb{Z}^{n+2}$ 
in the both cases. 
\end{proof}

By Lemma \ref{lem;Im2to1} (2) and \ref{lem;Im2compAZRC}, 
we obtain the following result. 

\begin{corollary}
\label{cor;Im2compAZRC}
Let $D=D_1\cup D_2$ be a connected diagram 
of two-component oriented link with $n$ crossings, 
where $D_1$ and $D_2$ are sub-diagram of $D$ 
representing the first and the second components respectively.  
We take two distinct crossings $x,y$ of 
$D_1$ and $D_2$. 
We suppose that $D_2$ crosses $D_1$ from the right to the left at $x$. 
\begin{enumerate}
\item 
If $D_2$ crosses $D_1$ from the left to the right at $y$, then 
there exist $\mathbf{v}_{xy} \in \mathbb{Z}^{n+2}$ such that 
any components of $A_{a1}(D)\mathbf{v}_{xy} \in \mathbb{Z}^{n}$ are $0$ 
but the components of $A_{a1}(D)\mathbf{v}_{xy} \in \mathbb{Z}^{n}$ to $x$ and $y$ are  
$\varepsilon _x$ and $\varepsilon _y$. 
\item 
If $D_2$ crosses $D_1$ from the right to the left at $y$, then 
there exist $\mathbf{v}_{xy} \in \mathbb{Z}^{n+2}$ such that 
any components of $A_{a1}(D)\mathbf{v}_{xy} \in \mathbb{Z}^{n}$ are $0$ 
but the components of $A_{a1}(D)\mathbf{v}_{xy} \in \mathbb{Z}^{n}$ to $x$ and $y$ are  
$\varepsilon _x$ and $-\varepsilon _y$.  
\end{enumerate}
\hfill $\square$
\end{corollary}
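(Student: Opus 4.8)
The plan is to deduce the single-counting statement directly from the double-counting statement already proved in Lemma \ref{lem;Im2compAZRC}, using the image-transfer result Lemma \ref{lem;Im2to1}(2). Since $D$ is a \emph{connected} diagram it has $d=1$ connected component, so $n+d+1=n+2$, and both lemmas live in the same ambient lattice $\mathbb{Z}^{n+2}$; in particular the vector produced below will automatically have the right length.

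First I would fix the target vector. In case (1), let $\mathbf{b}\in\mathbb{Z}^n$ be the vector whose component at $x$ is $\varepsilon_x$, whose component at $y$ is $\varepsilon_y$, and whose other components are $0$; in case (2) take instead the component $-\varepsilon_y$ at $y$. By Lemma \ref{lem;Im2compAZRC} there is a witness $\mathbf{w}_{xy}\in\mathbb{Z}^{n+2}$ with $A_{a2}(D)\mathbf{w}_{xy}=\mathbf{b}$ in the respective case; that is, $\mathbf{b}$ lies in the image of $\Phi_{a2}(D)$. Next I would apply Lemma \ref{lem;Im2to1}(2) with $\mathbf{c}=-\mathbf{b}$: the equation $A_{a2}(D)\mathbf{w}_{xy}+\mathbf{c}=\mathbf{0}$ holds, so the hypothesis of that lemma is met, and we obtain $\mathbf{u}\in\mathbb{Z}^{n+2}$ with $A_{a1}(D)\mathbf{u}+\mathbf{c}=\mathbf{0}$, i.e. $A_{a1}(D)\mathbf{u}=\mathbf{b}$. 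Setting $\mathbf{v}_{xy}=\mathbf{u}$ yields exactly the asserted vector in both cases.

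The argument carries no real obstacle beyond bookkeeping: Lemma \ref{lem;Im2to1}(2) already performs the entire reduction from the double counting rule to the single counting rule (this is precisely where the reducible crossings are handled via Theorem \ref{thm;add1AZRC}), while Lemma \ref{lem;Im2compAZRC} supplies the explicit componentwise Alexander numbering realizing $\mathbf{b}$. The one point that needs care is matching the sign conventions $\varepsilon_x,\pm\varepsilon_y$ and the crossing-direction hypotheses across the two cases, so that the vector $\mathbf{b}$ fed into Lemma \ref{lem;Im2to1}(2) is literally the one produced by Lemma \ref{lem;Im2compAZRC}; once this is observed the conclusion is immediate. As an alternative phrasing, one could instead invoke Theorem \ref{thm;image}(2), which asserts $\operatorname{Im}\Phi_{a1}(D)=\operatorname{Im}\Phi_{a2}(D)$ outright, and conclude $\mathbf{b}\in\operatorname{Im}\Phi_{a1}(D)$ without introducing a specific $\mathbf{c}$.
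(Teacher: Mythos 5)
Your proposal is correct and follows exactly the paper's route: the paper derives Corollary \ref{cor;Im2compAZRC} precisely by combining Lemma \ref{lem;Im2compAZRC} (which produces the witness for $A_{a2}(D)$) with the image-transfer Lemma \ref{lem;Im2to1} (2). Your write-up merely makes explicit the substitution $\mathbf{c}=-\mathbf{b}$ that the paper leaves implicit.
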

 
We note that 
Lemma \ref{lem;Im2comp} is a modulo $2$ reduction of Corollary \ref{cor;Im2compAZRC}. 

To construct a generating system of the image of $\Phi _{a2}(D)$ 
for a connected diagram of a 2-component link $D=D_1\cup D_2$ 
extending Theorem \ref{thm;ImBasis2comp}, 
we do not need all pair of the distinct crossings of $D_1$ and $D_2$,  
since we obtain the following result by easy calculation. 

\begin{corollary}\label{cor;Im2compAZRC2}
Let $D=D_1\cup D_2$ be a connected diagram 
of two-component oriented link with $n$ crossings, 
where $D_1$ and $D_2$ are sub-diagram of $D$ 
representing the first and the second components respectively.  
We suppose that 
there exist three distinct crossings $x,y,z$ of 
$D_1$ and $D_2$,  
and that $D_2$ crosses $D_1$ from the right to the left at $x$. 
Let $\mathbf{v}_{xy}, \mathbf{v}_{xz} \in \mathbb{Z}^n$ be obtained by Lemma \ref{lem;Im2compAZRC}.  
\begin{enumerate}
\item 
If $D_2$ crosses $D_1$ from the left to the right at $y$ and $z$, 
then any components of  $A_{a2}(D) (\mathbf{v}_{xy}-\mathbf{v}_{xz})$ are $0$ 
but the components of  $A_{a2}(D) (\mathbf{v}_{xy}-\mathbf{v}_{xz})$ to $y$ and $z$ are 
$\varepsilon _y$ and $-\varepsilon _z$ respectively. 
\item 
If $D_2$ crosses $D_1$ from the left to the right at $y$
 and  from the right to the left at $z$, 
then any components of  $A_{a2}(D) (\mathbf{v}_{xy}-\mathbf{v}_{xz})$ are $0$ 
but the components of  $A_{a2}(D) (\mathbf{v}_{xy}-\mathbf{v}_{xz})$ to $y$ and $z$ are 
$\varepsilon _y$ and $\varepsilon _z$ respectively. 
\item 
If $D_2$ crosses $D_1$ from the right to the left at $y$ and $z$, 
then any components of  $A_{a2}(D) (\mathbf{v}_{xz}-\mathbf{v}_{xy})$ are $0$ 
but the components of  $A_{a2}(D) (\mathbf{v}_{xz}-\mathbf{v}_{xy})$ to $y$ and $z$ are 
$\varepsilon _y$ and $-\varepsilon _z$ respectively. 
\end{enumerate}
\hfill $\square$
\end{corollary}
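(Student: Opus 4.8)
The plan is to deduce all three assertions directly from Lemma \ref{lem;Im2compAZRC} together with the linearity of the homomorphism $\Phi_{a2}(D)$, whose representation matrix is $A_{a2}(D)$; no new geometric construction is needed, which is why the paper labels this an easy calculation. The feature to exploit is that in \emph{both} cases of Lemma \ref{lem;Im2compAZRC} the vector $\mathbf{v}_{xy}$ (and likewise $\mathbf{v}_{xz}$) produces the value $\varepsilon_x$ at the crossing $x$, because $x$ is the fixed reference crossing at which $D_2$ crosses $D_1$ from the right to the left in every case. Consequently, when we form the appropriate difference of $\mathbf{v}_{xy}$ and $\mathbf{v}_{xz}$, the $x$-component cancels, and only the entries at $y$ and $z$ survive.

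First I would record, in each of the three cases, the outputs of Lemma \ref{lem;Im2compAZRC} applied to the pair $(x,y)$ and to the pair $(x,z)$. In case (1), where $D_2$ crosses $D_1$ from the left to the right at both $y$ and $z$, part (1) of the lemma applies to each pair, so $A_{a2}(D)\mathbf{v}_{xy}$ has $\varepsilon_x$ at $x$, $\varepsilon_y$ at $y$, and $0$ at every other crossing, while $A_{a2}(D)\mathbf{v}_{xz}$ has $\varepsilon_x$ at $x$, $\varepsilon_z$ at $z$, and $0$ elsewhere. In case (2) I would apply part (1) of the lemma to $(x,y)$ and part (2) to $(x,z)$, the latter giving the value $-\varepsilon_z$ at $z$; in case (3) I would apply part (2) to both pairs, obtaining $-\varepsilon_y$ at $y$ from $\mathbf{v}_{xy}$ and $-\varepsilon_z$ at $z$ from $\mathbf{v}_{xz}$.

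Next I would take the difference, using $A_{a2}(D)(\mathbf{v}_{xy}-\mathbf{v}_{xz}) = A_{a2}(D)\mathbf{v}_{xy} - A_{a2}(D)\mathbf{v}_{xz}$. In cases (1) and (2) the $x$-entries cancel as $\varepsilon_x-\varepsilon_x=0$, the $y$-entry equals $\varepsilon_y$ (inherited from $\mathbf{v}_{xy}$), and the $z$-entry equals $0-\varepsilon_z=-\varepsilon_z$ in case (1) and $0-(-\varepsilon_z)=\varepsilon_z$ in case (2), with all remaining entries $0$; these are exactly the claimed vectors. For case (3) I would instead form the difference in the reversed order $\mathbf{v}_{xz}-\mathbf{v}_{xy}$, so that the surviving entries become $0-(-\varepsilon_y)=\varepsilon_y$ at $y$ and $-\varepsilon_z-0=-\varepsilon_z$ at $z$, matching the stated conclusion.

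Since every step is a single linear combination of two vectors already produced by Lemma \ref{lem;Im2compAZRC}, there is no genuine obstacle. The only thing demanding care is the bookkeeping: selecting the correct part of Lemma \ref{lem;Im2compAZRC} according to whether $D_2$ crosses $D_1$ from the left or from the right at each of $y$ and $z$, and choosing the order of subtraction so that the intended signs land on $y$ and $z$ (note in particular the reversed order $\mathbf{v}_{xz}-\mathbf{v}_{xy}$ forced in case (3)). Tracking these signs is precisely what makes the three cases appear distinct even though the underlying computation is identical.
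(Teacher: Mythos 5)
Your proposal is correct and is exactly the "easy calculation" the paper alludes to before stating the corollary (whose proof it omits): apply the appropriate part of Lemma \ref{lem;Im2compAZRC} to the pairs $(x,y)$ and $(x,z)$, subtract so the $\varepsilon_x$ entries at $x$ cancel, and track the signs at $y$ and $z$. The sign bookkeeping in all three cases, including the reversed order of subtraction in case (3), checks out.
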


We note that 
the above $\mathbf{v}_{xy}-\mathbf{v}_{xz}$ or $\mathbf{v}_{xz}-\mathbf{v}_{xy}$ 
are not equal to 
$\mathbf{v}_{yz}$ obtained in the proof of Lemma \ref{lem;Im2compAZRC} generally. 

We obtain a basis of 
the image of the homomorphism of the alternating integral region choice problem as belows. 

\begin{theorem}\label{thm;ImBasis2compAZRC}
Let $D=D_1\cup D_2$ be a connected diagram 
of two-component oriented link with $n$ crossings $x_1, \cdots , x_n$, 
where $D_1$ and $D_2$ are sub-diagram of $D$ 
representing the first and the second components respectively.  
We suppose that 
each of $x_1, \cdots , x_k$ ($k<n$) is a crossing in $D_1$ or a crossing in $D_2$,  
$x_{k+1}, \cdots , x_n$ are crossings of $D_1$ and $D_2$, 
and $D_2$ crosses $D_1$ from the right to the left at $x_n$. 
We take $\mathbf{e}_1, \cdots , \mathbf{e}_{n-1} \in \mathbb{Z}^n$ as belows: 
\begin{enumerate}
\item
for $i=1, \cdots, k$, 
let $\mathbf{e}_i$ be the element of $\mathbb{Z}^n$ 
such that 
the $i$-th component is $1$ and the others are $0$;
\item 
for $i=k+1, \cdots , n-1$, 
let $\mathbf{e}_i$ be the element of $\mathbb{Z}^n$ 
such that 
the $n$-th component is $\varepsilon _{x_n}$ and 
the $i$-th component is  $\varepsilon _{x_i}$ (resp. $-\varepsilon _{x_i}$) 
if $D_2$ crosses $D_1$ at $x_i$ from the left to the right (resp. from the right to the left), 
and that the others are $0$. 
\end{enumerate}
Then the set of $\mathbf{e}_1, \cdots , \mathbf{e}_{n-1}$ is 
a basis of the image of the homomorphism 
induced by the alternating integral region choice problem. 
Therefore the systems of linear equations 
$A_{a1}(D_1\cup D_2) \mathbf{u}+\mathbf{c}=\mathbf{0}$ and 
$A_{a2}(D_1\cup D_2) \mathbf{w}+\mathbf{c}=\mathbf{0}$ have 
solutions $\mathbf{u}, \mathbf{w} \in \mathbb{Z}^{n+2}$ 
if and only if 
$\mathbf{c} \in \mathbb{Z}^n$ is 
a linear combination of 
$\mathbf{e}_1, \cdots , \mathbf{e}_{n-1}$.  
\end{theorem}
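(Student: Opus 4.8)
The plan is to identify the image of $\Phi_{a2}(D)$ explicitly and to show that $\mathbf{e}_1,\dots,\mathbf{e}_{n-1}$ form a $\mathbb{Z}$-basis of it; the final solvability statement is then immediate. Since $D$ is connected we have $d=1$, and since the link has $l=2$ components, Theorem \ref{thm;rankS} gives that $A_{a1}(D)$ and $A_{a2}(D)$ both have rank $n+d-l=n-1$, so both images are free abelian of rank $n-1$. By Theorem \ref{thm;image} the images of $\Phi_{a1}(D)$ and $\Phi_{a2}(D)$ coincide, so it suffices to treat $\Phi_{a2}(D)$. Writing $L=\langle\mathbf{e}_1,\dots,\mathbf{e}_{n-1}\rangle_{\mathbb{Z}}\subseteq\mathbb{Z}^n$, the goal reduces to proving $L=\mathrm{Im}\,\Phi_{a2}(D)$: indeed the two systems in the statement are solvable exactly when $-\mathbf{c}$, equivalently $\mathbf{c}$, lies in this common image, which is the asserted condition.

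First I would check $L\subseteq\mathrm{Im}\,\Phi_{a2}(D)$, i.e.\ that each $\mathbf{e}_i$ is realized. For $i\le k$ the crossing $x_i$ is a self-crossing of $D_1$ or of $D_2$, hence a crossing of arcs in the same link component, and Theorem \ref{thm;add1AZRC} supplies $\mathbf{v}_{x_i}\in\mathbb{Z}^{n+2}$ with $A_{a2}(D)\mathbf{v}_{x_i}=\mathbf{e}_i$. For $k<i<n$ the crossing $x_i$ is a mixed crossing; applying Lemma \ref{lem;Im2compAZRC} to the ordered pair $(x,y)=(x_n,x_i)$ --- legitimate since by hypothesis $D_2$ crosses $D_1$ from the right to the left at $x_n$ --- produces $\mathbf{v}_{x_nx_i}\in\mathbb{Z}^{n+2}$ whose image under $A_{a2}(D)$ has entry $\varepsilon_{x_n}$ at $x_n$, entry $\varepsilon_{x_i}$ at $x_i$ in case (1) and $-\varepsilon_{x_i}$ at $x_i$ in case (2), and $0$ elsewhere. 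These are precisely the two descriptions of $\mathbf{e}_i$ in the statement, so $\mathbf{e}_i\in\mathrm{Im}\,\Phi_{a2}(D)$ in all cases.

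Next I would establish linear independence together with the sharper fact that $L$ is a direct summand of $\mathbb{Z}^n$. Consider the $(n-1)\times n$ integer matrix $E$ whose rows are $\mathbf{e}_1,\dots,\mathbf{e}_{n-1}$. Its submatrix on columns $1,\dots,n-1$ is diagonal: row $i$ has exactly one nonzero entry in that block, equal to $1$ for $i\le k$ and to $\pm\varepsilon_{x_i}=\pm1$ for $k<i<n$, the only other possible nonzero entry of such a row lying in column $n$. Hence this maximal minor equals $\pm1$, so the $\mathbf{e}_i$ are linearly independent and, the minor being unimodular, the Smith normal form of $E$ is $(I_{n-1}\mid\mathbf{0})$ up to column operations. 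Therefore $L$ is a direct summand of $\mathbb{Z}^n$ and the quotient $\mathbb{Z}^n/L\cong\mathbb{Z}$ is torsion-free.

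Finally I would combine the two inclusions. We have $L\subseteq\mathrm{Im}\,\Phi_{a2}(D)$ with both free of rank $n-1$, so the image of $\mathrm{Im}\,\Phi_{a2}(D)$ under the projection $\mathbb{Z}^n\to\mathbb{Z}^n/L\cong\mathbb{Z}$ is $\mathrm{Im}\,\Phi_{a2}(D)/L$, a subgroup of rank $0$ of the torsion-free group $\mathbb{Z}$, hence trivial; this forces $\mathrm{Im}\,\Phi_{a2}(D)\subseteq L$ and thus $L=\mathrm{Im}\,\Phi_{a2}(D)=\mathrm{Im}\,\Phi_{a1}(D)$. The main obstacle I anticipate is exactly this last step: over $\mathbb{Z}$ a linearly independent family of the correct cardinality need not generate a prescribed rank-$(n-1)$ lattice, so counting ranks alone is insufficient. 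The unimodular-minor computation of the previous paragraph is what upgrades ``linearly independent of the right rank'' to ``a basis of the full image,'' by excluding any finite-index overlattice. The equivalence for solvability of $A_{a1}(D_1\cup D_2)\mathbf{u}+\mathbf{c}=\mathbf{0}$ and $A_{a2}(D_1\cup D_2)\mathbf{w}+\mathbf{c}=\mathbf{0}$ then follows at once from $L=\mathrm{Im}\,\Phi_{a1}(D)=\mathrm{Im}\,\Phi_{a2}(D)$.
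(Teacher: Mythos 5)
Your proposal is correct and follows essentially the same route as the paper: realize each $\mathbf{e}_i$ in the image via Theorem \ref{thm;add1AZRC} (for $i\le k$) and Lemma \ref{lem;Im2compAZRC} applied to the pair $(x_n,x_i)$ (for $k<i<n$), check linear independence, and use the rank count $n+d-l=n-1$ from the region choice matrix theorems together with Theorem \ref{thm;image} to conclude. Your unimodular-minor argument showing that the lattice spanned by $\mathbf{e}_1,\dots,\mathbf{e}_{n-1}$ is a direct summand of $\mathbb{Z}^n$ is a welcome sharpening of the step the paper disposes of with ``by the construction \ldots\ it is shown that $\mathbf{c}$ is a linear combination over $\mathbb{Z}$,'' since it makes explicit why rational dependence upgrades to integral dependence and rules out a finite-index overlattice.
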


\begin{proof}
By Theorem \ref{thm;add1AZRC} and Lemma \ref{lem;Im2compAZRC}, 
$\mathbf{e}_1, \cdots , \mathbf{e}_{n-1}$ are elements in 
the image of the homomorphism $\Phi _{a2} (D_1\cup D_2)$. 
They are linearly independent by the construction. 
Let $\mathbf{c}$ be an element of the image of the homomorphism $\Phi _{a2} (D_1\cup D_2)$. 
By Theorem \ref{thm;rank}, 
the rank of the image of the homomorphism $\Phi _{a2} (D_1\cup D_2)$ is $n+1-2=n-1$. 
Hence $\mathbf{c}, \mathbf{e}_1, \cdots , \mathbf{e}_{n-1}$ are linearly dependent 
since $\mathbf{e}_1, \cdots , \mathbf{e}_{n-1}$ are linearly independent. 
By the construction of $\mathbf{e}_1, \cdots , \mathbf{e}_{n-1}$, 
it is shown that $\mathbf{c}$ is a linear combination of  $\mathbf{e}_1, \cdots , \mathbf{e}_{n-1}$ over $\mathbb{Z}$. 
Then the set of $\mathbf{e}_1, \cdots , \mathbf{e}_{n-1}$ is 
a basis of the image of the homomorphism  $\Phi _{a2}(D_1\cup D_2)$. 

By Theorem \ref{thm;image} (2), 
the image of the homomorphism $\Phi _{a1}(D_1\cup D_2)$
coincides with the image of the homomorphism $\Phi _{a2}(D_1\cup D_2)$. 
\end{proof}

We note that the modulo $2$ reduction of Theorem \ref{thm;ImBasis2compAZRC} implies 
Theorem \ref{thm;ImBasis2comp}. 

From the above basis, 
we obtain a basis of the image of the homomorphism of 
the definite integral region choice problem as belows. 
Let $D=D_1\cup D_2$ be a connected diagram 
of two-component oriented link with $n$ crossings $x_1, \cdots , x_n$. 
Let $R_1, \cdots , R_{n+2}$ be the regions of $D$. 
We suppose that 
each of $x_1, \cdots , x_k$ ($k<n$) is a crossing in $D_1$ or a crossing in $D_2$, 
$x_{k+1}, \cdots , x_n$ are crossings of $D_1$ and $D_2$, 
and $D_2$ crosses $D_1$ from the right to the left at $x_n$. 
For each of $\mathbf{e}_1, \cdots , \mathbf{e}_{n-1}$ obtained by Theorem \ref{thm;ImBasis2compAZRC}, 
there exists a solution $\mathbf{v}_i \in \mathbb{Z}^{n+2}$ of $A_{a2}(D)\mathbf{v}_i=\mathbf{e}_i$. 
We take the checkerboard coloring such that 
the left and right regions of both oriented arcs crossing at $x_n$ are assigned $0$ as 
  \begin{minipage}{15\unitlength}
    \begin{picture}(15,15)
      \put(0,0){\vector(1,1){15}}
      \qbezier(15,0)(15,0)(10,5)
      \qbezier(5,10)(0,15)(0,15)
      \put(0,15){\vector(-1,1){0}}
    \put(10,5){{\footnotesize 0}}
    \put(0,5){{\footnotesize 0}}
    \put(5,10){{\footnotesize 1}}
    \put(5,0){{\footnotesize 1}}
    \end{picture}
  \end{minipage}
or 
  \begin{minipage}{15\unitlength}
    \begin{picture}(15,15)
      \put(15,0){\vector(-1,1){15}}
      \qbezier(0,0)(0,0)(5,5)
      \qbezier(10,10)(15,15)(15,15)
      \put(15,15){\vector(1,1){0}}
    \put(10,5){{\footnotesize 0}}
    \put(0,5){{\footnotesize 0}}
    \put(5,10){{\footnotesize 1}}
    \put(5,0){{\footnotesize 1}}
    \end{picture}
  \end{minipage}
. 
For each $i= 1, \cdots , k$, 
there exists a solution $\bar{\mathbf{v}}_i \in \mathbb{Z}^{n+2}$ of $A_{d2}(D)\bar{\mathbf{v}}_i=\mathbf{e}_i$ 
by Theorem \ref{thm;add1DZRC}. 
For each $i=k+1, \cdots , n-1$, 
we multiply the $j$-th component of $\mathbf{v}_i$ by $-1$ 
if the region $R_j$ is assigned the checkerboard index $1$. 
We denote by $\bar{\mathbf{v}}_i$ the obtained element of $\mathbb{Z}^{n+2}$ from $\mathbf{v}_i$. 
Let $\bar{\mathbf{e}}_i=A_{d2}(D)\bar{\mathbf{v}}_i$. 
Then the $n$-th component of $\bar{\mathbf{e}}_i$ becomes $1$, 
the $i$-th component becomes $1$ or $-1$, 
and the others are $0$. 

\begin{theorem}
The set of the above $\mathbf{e}_1, \cdots , \mathbf{e}_k, \bar{\mathbf{e}}_{k+1}, \cdots , \bar{\mathbf{e}}_{n-1}$ 
is a a basis of the image of the homomorphism 
induced by the definite integral region choice problem 
on the connected diagram of two-component link $D=D_1\cup D_2$. 
Therefore the systems of linear equations 
$A_{d1}(D_1\cup D_2) \mathbf{u}+\mathbf{c}=\mathbf{0}$ and 
$A_{d2}(D_1\cup D_2) \mathbf{w}+\mathbf{c}=\mathbf{0}$ have 
solutions $\mathbf{u}, \mathbf{w} \in \mathbb{Z}^{n+2}$ 
if and only if 
$\mathbf{c} \in \mathbb{Z}^n$ is 
a linear combination of 
$\mathbf{e}_1, \cdots , \mathbf{e}_k, \bar{\mathbf{e}}_{k+1}, \cdots , \bar{\mathbf{e}}_{n-1}$.  
\end{theorem}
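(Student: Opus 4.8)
The plan is to run the proof of Theorem \ref{thm;ImBasis2compAZRC} essentially verbatim, replacing the alternating matrix $A_{a2}$ by the definite matrix $A_{d2}$ throughout, and to close with Theorem \ref{thm;image} (1) to pass from $\Phi_{d2}$ to $\Phi_{d1}$. Accordingly I would organize the argument into the three standard steps for a $\mathbb{Z}$-basis of the image: membership, linear independence, and spanning over $\mathbb{Z}$.

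First I would verify that each listed vector lies in the image of $\Phi_{d2}(D)$. For $i=1,\dots,k$ the crossing $x_i$ is a self-crossing of one component, so Theorem \ref{thm;add1DZRC} furnishes $\bar{\mathbf{v}}_i\in\mathbb{Z}^{n+2}$ with $A_{d2}(D)\bar{\mathbf{v}}_i=\mathbf{e}_i$; hence $\mathbf{e}_i\in\operatorname{Im}\Phi_{d2}(D)$. For $i=k+1,\dots,n-1$ the vector $\bar{\mathbf{v}}_i$ is the checkerboard sign flip $v_{R_j}\mapsto(-1)^{c_{R_j}}v_{R_j}$ of a preimage $\mathbf{v}_i$ of $\mathbf{e}_i$ under $A_{a2}(D)$, and $\bar{\mathbf{e}}_i:=A_{d2}(D)\bar{\mathbf{v}}_i$ is in the image by construction. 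The assertion that $\bar{\mathbf{e}}_i$ has $n$-th component $1$, $i$-th component $\pm1$, and all remaining components $0$ is the same local corner computation as in Lemma \ref{lem;KerAtoD} and in the proof of Theorem \ref{thm;add1DZRC}, using the special checkerboard coloring that assigns $0$ to the left and right regions of both arcs meeting at $x_n$.

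Next I would record linear independence from the triangular shape of the supports: for $1\le j\le k$ only $\mathbf{e}_j$ is nonzero in coordinate $j$, and for $k+1\le j\le n-1$ only $\bar{\mathbf{e}}_j$ is nonzero in coordinate $j$ (with value $\pm1$). Reading off coordinates $1,\dots,n-1$ of any vanishing combination forces all coefficients to be $0$. For spanning, Theorem \ref{thm;rank} with $d=1$ and $l=2$ gives that $A_{d2}(D)$, hence $\operatorname{Im}\Phi_{d2}(D)$, has rank $n-1$. Given $\mathbf{c}\in\operatorname{Im}\Phi_{d2}(D)$, the $n-1$ independent vectors together with $\mathbf{c}$ are linearly dependent over $\mathbb{Q}$, so $\mathbf{c}=\sum_{i=1}^{k}q_i\mathbf{e}_i+\sum_{i=k+1}^{n-1}q_i\bar{\mathbf{e}}_i$ with rational $q_i$; comparing coordinate $j$ for $j\le k$ yields $q_j=c_j$, and for $k+1\le j\le n-1$ yields $q_j=\pm c_j$, so every $q_i\in\mathbb{Z}$. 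Thus the set is a $\mathbb{Z}$-basis of $\operatorname{Im}\Phi_{d2}(D)$. Finally, $A_{d2}(D)\mathbf{w}+\mathbf{c}=\mathbf{0}$ is solvable exactly when $\mathbf{c}$ is an integral combination of the basis, and by Theorem \ref{thm;image} (1) the image of $\Phi_{d1}(D)$ equals that of $\Phi_{d2}(D)$, so the identical condition governs $A_{d1}(D)\mathbf{u}+\mathbf{c}=\mathbf{0}$.

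The main obstacle I expect is not the integrality argument, which is immediate from the triangular supports exactly as in Theorem \ref{thm;ImBasis2compAZRC}, but rather confirming that the flipped vectors $\bar{\mathbf{e}}_i$ truly have the stated shape: one must check that the checkerboard sign flip leaves the off-diagonal entries zero and produces $+1$ (rather than $-1$) in the $n$-th coordinate. This rests entirely on the chosen coloring at $x_n$ and is precisely the content of the construction preceding the theorem; once it is granted, the remainder of the proof is a faithful transcription of the alternating case.
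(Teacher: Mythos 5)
Your proposal follows the paper's own proof essentially verbatim: membership of each vector in $\operatorname{Im}\Phi_{d2}(D)$ by the construction (Theorem \ref{thm;add1DZRC} for the self-crossings and the checkerboard flip of the $\mathbf{v}_i$ for the mixed crossings), linear independence and integrality of the coefficients from the triangular supports, the rank count $n-1$ from Theorem \ref{thm;rank} to get spanning, and Theorem \ref{thm;image} (1) to transfer the solvability criterion from $A_{d2}$ to $A_{d1}$. The only difference is that you spell out the coordinate-comparison and sign-verification details that the paper compresses into ``by the construction,'' so the argument is correct and the approach is the same.
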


\begin{proof}
By the construction, 
$\mathbf{e}_1, \cdots , \mathbf{e}_k, \bar{\mathbf{e}}_{k+1}, \cdots , \bar{\mathbf{e}}_{n-1}$ 
are elements in the image of the homomorphism $\Phi _{d2}(D_1\cup D_2)$ 
and linearly independent. 
Let $\mathbf{c}$ be an element of the image of the homomorphism $\Phi _{d2}(D_1\cup D_2)$. 
By Theorem \ref{thm;rank}, 
the rank of the image of the homomorphism $\Phi _{d2}(D_1\cup D_2)$ is $n+1-2=n-1$. 
Hence $\mathbf{c}, \mathbf{e}_1, \cdots , \mathbf{e}_k, \bar{\mathbf{e}}_{k+1}, \cdots , \bar{\mathbf{e}}_{n-1}$ 
are linearly dependent 
since $\mathbf{e}_1, \cdots , \mathbf{e}_k, \bar{\mathbf{e}}_{k+1}, \cdots , \bar{\mathbf{e}}_{n-1}$ are linearly independent. 
By the construction of $\mathbf{e}_1, \cdots , \mathbf{e}_k, \bar{\mathbf{e}}_{k+1}, \cdots , \bar{\mathbf{e}}_{n-1}$, 
it is shown that $\mathbf{c}$ is  
a linear combination of $\mathbf{e}_1, \cdots , \mathbf{e}_k, \bar{\mathbf{e}}_{k+1}, \cdots , \bar{\mathbf{e}}_{n-1}$ 
over $\mathbb{Z}$.
Then the set of $\mathbf{e}_1, \cdots , \mathbf{e}_k, \bar{\mathbf{e}}_{k+1}, \cdots , \bar{\mathbf{e}}_{n-1}$ 
is a basis of the image of the homomorphism  $\Phi _{d2}(D_1\cup D_2)$. 

By Theorem \ref{thm;image} (1), 
the image of the homomorphism $\Phi _{d1}(D_1\cup D_2)$
coincides with the image of the homomorphism $\Phi _{d2}(D_1\cup D_2)$. 
\end{proof}

\begin{remark}
In \cite{hashizume2015}, 
Hashizume gave a generating system of the image of the $\mathbb{Z}_2$-homomorphism 
induced by region crossing changes
for a link diagram 
with arbitrary number of link components. 
Her generating system includes 
that in Theorem \ref{thm;ImBasis2comp}. 
For each of the $\mathbb{Z}$-homomorphisms 
induced by integral region choice problems on a link diagram 
with arbitrary number of link components, 
we are finding a basis of the image.  
\end{remark}


\begin{thebibliography}{99}
\bibitem{aharasuzuki}
K. Ahara and M. Suzuki, 
\textit{An integral region choice problem on knot projection}, 
J. Knot Theory Ramifications 21 (11) (2012),  Article ID:250119.
\bibitem{alexander}
J. W. Alexander, 
\textit{Topological invariants of knots and links}, 
Trans. Amer. Math. Soc. 30 (1928), 275-306. 
\bibitem{cheng}
Z. Cheng, 
\textit{When is region crossing change an unknotting operation ?}, 
Math. Proc. Camb. Phil. Soc. 155 (2013), 257-269. 
\bibitem{chenggao} 
Z. Cheng and H. Gao, 
\textit{On region crossing change and incidence matrix}, 
Sci. China Math. 55 (7) (2012), 1487-1495.
\bibitem{haradaM}
S. Harada, 
\textit{Region crossing changes on knot diagrams and its related topics}, 
master thesis of Nagoya University, 2018 (Japanese).
\bibitem{hashizume2013}
M. Hashizume, 
\textit{On the homomorphism induced by region crossing change}, 
JP J. Geom. Topol. 14 (1) (2013), 29-37.
\bibitem{hashizume2015} 
M. Hashizume, 
\textit{The image and the cokernel of a homomorphism induced by region crossing change}, 
JP J. Geom. Topol. 18 (2) (2015), 133-162.
\bibitem{kauffman2006}
L. H. Kauffman, 
Formal knot theory, 
Dover Publications, 2006. 
\bibitem{kawauchiEdu2014}
A. Kawauchi, 
\textit{On a trial of early childhood education of mathematics by a knot}, 
in: Chapter one of: 
Introduction to Mathematical Education on Knots
- for primary school children, junior high students, and the high school students, 
No. 4 (2014), (ed. A. Kawauchi and T. Yanagimoto) 
1-8 (Japanese). 
\bibitem{shimizu}
A. Shimizu, 
\textit{Region crossing change is an unknotting operation}, 
J. Math. Soc. Japan 66 (3) (2014), 693-708.
\end{thebibliography}
\end{document}